\date{}
\theoremstyle{plain}
\newtheorem{thm}{Theorem}[section]
\newtheorem{lem}[thm]{Lemma}
\newtheorem{cor}[thm]{Corollary}
\newtheorem{prop}[thm]{Proposition}
\newtheorem*{rem}{Remark}
\newtheorem*{thmA}{Theorem A}
\newtheorem*{thmB}{Theorem B}
\newtheorem*{thmC}{Theorem C}
\newtheorem{defi}[thm]{Definition}
\begin{document}

\title{Characterization of weighted infinitesimal boundedness of Schr\"{o}dinger operator}
\author{Yanhan Chen \footnote{Department of Mathematics, Graduate School of Science, Kyoto University, Kyoto 606-8502, Japan. \newline e-mail: \texttt{chen.yanhan.67s@st.kyoto-u.ac.jp}}}
\maketitle
\renewcommand{\abstractname}{Abstract}
\begin{abstract}
    In this paper, we characterize the weighted infinitesimal boundedness: for $0<\alpha<n$ and $1<p<\infty$,
    $$\|V\phi\|_{L^{p}(w)}^{p}\leq\epsilon\|(-\Delta)^{\frac{\alpha}{2}}\phi\|_{L^{p}(w)}^{p}+C(\epsilon)\|\phi\|_{L^{p}(w)}^{p}.$$
    In particular, we extend the classical result due to Maz'ya and Verbitsky by using Carleson condition, localization estimates and capacity theory.

\end{abstract}

\textbf{Keywords:} Schr\"{o}dinger operator, Weighted estimates, Capacity

\section{Introduction}
\indent Recently, investigations into the \textit{infinitesimal boundedness}
\begin{equation}
    \|V\phi\|_{L^{p}}^{p}\leq\epsilon\|(-\Delta)^{\frac{\alpha}{2}}\phi\|_{L^{p}}^{p}+C(\epsilon)\|\phi\|_{L^{p}}^{p}\notag
\end{equation}
and the related \textit{Trudinger type inequalities} 
\begin{equation}
     \|V\phi\|_{L^{p}}^{p}\leq\epsilon\|(-\Delta)^{\frac{\alpha}{2}}\phi\|_{L^{p}}^{p}+C\epsilon^{-\beta}\|\phi\|_{L^{p}}^{p}\notag
\end{equation}
for the Schr\"{o}diner operator $H:=(-\Delta)^{\frac{\alpha}{2}}+V$ have attracted considerable attention. These two concepts play an important role in the research of self-adjointness and spectral stability of Schr\"{o}dinger operators (\cite{C80,RS75,S72}), Schr\"{o}dinger semigroup $\{e^{tH}\}$ (\cite{D97,D02,L80,SS17}) and elliptic equations (\cite{AS82,T73}). As far as we know, there is no result on weighted analysis. Specifically, we focus on the class of potential $V\in\mathcal{D}^{'}$ such that for any $\epsilon>0$, there exists $C(\epsilon)>0$ such that
\begin{equation}
    \|V\phi\|_{L^{p}(w)}^{p}\leq\epsilon\|(-\Delta)^{\frac{\alpha}{2}}\phi\|_{L^{p}(w)}^{p}+C(\epsilon)\|\phi\|_{L^{p}(w)}^{p}\quad\forall \phi\in C_{c}^{\infty},\tag{1.1}
\end{equation}
as well as the Trudinger type inequality where $C(\epsilon)$ has power growth; there exsits $C>0$ such that for any $\epsilon>0$
\begin{equation}
     \|V\phi\|_{L^{p}(w)}^{p}\leq\epsilon\|(-\Delta)^{\frac{\alpha}{2}}\phi\|_{L^{p}(w)}^{p}+C\epsilon^{-\beta}\|\phi\|_{L^{p}(w)}^{p}\quad\forall \phi\in C_{c}^{\infty},\tag{1.2}
\end{equation}
for some $\beta>0$.\\
\indent For the case $H=-\Delta+V$ with $V$ being non-negative, Kato \cite{K72} first established a sufficient condition on $V$ ensuring the inequality $(1.1)$ holds when $p=1$. Later, Zheng and Yao \cite{ZY09} extended his result to higher order cases as follows.
\begin{thmA}
    \textnormal{(Zheng and Yao \cite{ZY09})} Let $\alpha>0$ and $0\leq V\in L_{loc}^{1}$. Then the followings are equivalent.\\
\textnormal{\textnormal{(i)}} For any $\epsilon>0$, there exists $C(\epsilon)>0$ such that for any $\phi\in C_{c}^{\infty}$,
$$\|V\phi\|_{L^{1}(\mathbb{R}^{n})}\leq\epsilon\|(-\Delta)^{\frac{\alpha}{2}}\phi\|_{L^{1}(\mathbb{R}^{n})}+C(\epsilon)\|\phi\|_{L^{1}(\mathbb{R}^{n})}.$$
\textnormal{(ii)} $V\in K_{\alpha}$.
\end{thmA}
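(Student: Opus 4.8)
The plan is to route both implications through the resolvent of $(-\Delta)^{\alpha/2}$. Let $G_\lambda$ be the convolution kernel of $(\lambda+(-\Delta)^{\alpha/2})^{-1}$, so $\widehat{G_\lambda}(\xi)=(\lambda+|\xi|^\alpha)^{-1}$; by dilation $G_\lambda(x)=\lambda^{n/\alpha-1}G_1(\lambda^{1/\alpha}x)$, hence $\|G_\lambda\|_{L^1}=\lambda^{-1}\|G_1\|_{L^1}$ for every $\lambda>0$. The one external input I need is the classical size control of the Bessel-type kernel $G_1$: for $0<\alpha<n$, $G_1(x)\gtrsim|x|^{\alpha-n}$ near the origin and $G_1(x)\lesssim|x|^{\alpha-n}$ everywhere with integrable decay at infinity (polynomial for $\alpha<2$, exponential for $\alpha=2$); rescaling turns this into $G_\lambda(x)\gtrsim|x|^{\alpha-n}$ for $|x|\le\lambda^{-1/\alpha}$ and $G_\lambda(x)\lesssim\min(|x|^{\alpha-n},\lambda^{-2}|x|^{-n-\alpha})$, with constants independent of $\lambda$. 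Together with the standard fact that $V\in K_\alpha$ forces $\sup_x\int_{|x-y|<1}V\,dy<\infty$, these bounds give the preliminary lemma
$$M(\lambda):=\sup_{y\in\mathbb R^n}\int_{\mathbb R^n}G_\lambda(x-y)V(x)\,dx\ \longrightarrow\ 0\quad(\lambda\to\infty)\qquad\Longleftrightarrow\qquad V\in K_\alpha .$$
For $\alpha\ge n$ one reads $K_\alpha$ with the logarithmic, resp. bounded, behaviour of $G_1$ at the origin; for $\alpha>2$ one replaces $G_\lambda$ by $|G_\lambda|$, which is still positive near the diagonal, and the same estimates hold.

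For (ii)$\Rightarrow$(i): if $\phi\in C_c^\infty$ then $(-\Delta)^{\alpha/2}\phi\in L^1$ (it decays like $|x|^{-n-\alpha}$), so the subordination identity
$$\phi=\lambda\,G_\lambda*\phi+G_\lambda*\bigl((-\Delta)^{\alpha/2}\phi\bigr),$$
which is immediate on the Fourier side, may be multiplied by $V$ and integrated; Tonelli applied to each term (with $|G_\lambda|$ when $\alpha>2$) gives $\|V\phi\|_{L^1}\le\lambda M(\lambda)\|\phi\|_{L^1}+M(\lambda)\|(-\Delta)^{\alpha/2}\phi\|_{L^1}$. Since $V\in K_\alpha$, for $\epsilon>0$ I would choose $\lambda=\lambda(\epsilon)$ with $M(\lambda)<\epsilon$ and set $C(\epsilon):=\lambda(\epsilon)M(\lambda(\epsilon))$; this is exactly (i).

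For (i)$\Rightarrow$(ii): testing (i) against translated cut-off bumps first gives $\sup_x\int_{|x-y|<1}V\,dy<\infty$, so that $V\phi\in L^1$ for $\phi=G_\lambda*g$ and (i) extends to such $\phi$, for $0\le g\in C_c^\infty$ with $\int g=1$, by a routine truncation-and-density argument. For such $\phi$ one has $(-\Delta)^{\alpha/2}\phi=g-\lambda\,G_\lambda*g$, whence $\|(-\Delta)^{\alpha/2}\phi\|_{L^1}\le1+\|G_1\|_{L^1}$ and $\|\phi\|_{L^1}\le\lambda^{-1}\|G_1\|_{L^1}$, and (i) becomes
$$\int_{\mathbb R^n}V\,|G_\lambda*g|\,dx\le\epsilon\bigl(1+\|G_1\|_{L^1}\bigr)+C(\epsilon)\lambda^{-1}\|G_1\|_{L^1}.$$
Now let $g$ shrink to a point mass at an arbitrary $x_0$; on the ball $B(x_0,\tfrac12\lambda^{-1/\alpha})$, where $G_\lambda>0$, bound the left side below by the corresponding integral over that ball, apply Fatou, use $G_\lambda(x-x_0)\gtrsim|x-x_0|^{\alpha-n}$, and take the supremum over $x_0$, to obtain $c\,\eta(\lambda^{-1/\alpha})\le C'\epsilon+C(\epsilon)\lambda^{-1}$, where $\eta(r):=\sup_x\int_{|x-y|<r}V(y)|x-y|^{\alpha-n}\,dy$. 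Letting $\lambda\to\infty$ forces $\limsup_{r\to0}\eta(r)\le C''\epsilon$ for every $\epsilon>0$, i.e. $\eta(r)\to0$, which is precisely $V\in K_\alpha$.

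The main obstacle is the preliminary lemma, and within it the implication $V\in K_\alpha\Rightarrow M(\lambda)\to0$ used in (ii)$\Rightarrow$(i): it requires the matching upper bound on $G_\lambda$ near the diagonal together with genuine integrable decay of $G_1$ at infinity (and, for $\alpha\notin(0,2]$, must be carried out for $|G_\lambda|$). The reverse implication, and everything in (i)$\Rightarrow$(ii), uses only the much softer lower bound $G_\lambda(x)\gtrsim|x|^{\alpha-n}$ for $|x|\le\lambda^{-1/\alpha}$, which drops out of the small-$x$ asymptotics of $G_1$ — or, for $0<\alpha\le2$, of the subordination formula $G_1=\int_0^\infty e^{-t}p_t\,dt$ with $p_t$ the $\alpha$-stable heat kernel. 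All the remaining steps are soft measure-theoretic manipulation, and the ranges $\alpha\ge n$ go through verbatim once the kernel bounds are read off with the appropriate ($\log$ or bounded) behaviour of $G_1$ at the origin.
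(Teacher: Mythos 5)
The paper does not prove Theorem A; it is quoted verbatim from Zheng and Yao \cite{ZY09}, so there is no in-paper proof to compare against. Your resolvent route — rewriting $(1.1)$ as $\lim_{\lambda\to\infty}\sup_{y}\int|G_\lambda(\cdot-y)|V\,dx=0$ with $G_\lambda$ the kernel of $(\lambda+(-\Delta)^{\alpha/2})^{-1}$, and then matching this quantity against the Kato condition via the small-$x$ asymptotics $G_1(x)\sim c_{n,\alpha}|x|^{\alpha-n}$ — is the standard and, as far as I can see, correct approach; it is also the same mechanism the author uses for the weighted generalization, where Lemma 3.1 plays exactly the role of your preliminary lemma.

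Two places need more care than you give them. First, you use the exact subordination identity $\phi=\lambda G_\lambda*\phi+G_\lambda*(-\Delta)^{\alpha/2}\phi$, so you are forced to take the kernel of $(\lambda+(-\Delta)^{\alpha/2})^{-1}$ rather than the Bessel kernel $(\lambda^2-\Delta)^{-\alpha/2}$; for $\alpha>2$ that kernel is not pointwise nonnegative. Your fix (pass to $|G_\lambda|$ for the upper bounds, and exploit positivity of $G_\lambda$ near the diagonal for the lower bound) is the right one, but the positivity near the diagonal is a quantitative statement — $G_1(x)=c_{n,\alpha}|x|^{\alpha-n}(1+o(1))$ as $x\to0$ with $c_{n,\alpha}>0$ — that you should prove, e.g.\ by splitting $(1+|\xi|^\alpha)^{-1}=|\xi|^{-\alpha}-|\xi|^{-\alpha}(1+|\xi|^\alpha)^{-1}$ and noting the second term is strictly less singular at the origin. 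Note that the paper avoids the sign issue by working with the positive Bessel kernel and using Lemma 2.3 to move between the two resolvents, but Lemma 2.3 rests on a Mikhlin-type multiplier theorem valid only for $p>1$; at $p=1$ that shortcut is unavailable, so the asymptotic-positivity argument is genuinely necessary here, and you are right to use the "exact" resolvent. Second, the extension of (i) from $C_c^\infty$ to $\phi=G_\lambda*g$ is not as routine as advertised: if $\eta_R$ is a smooth cutoff, $(-\Delta)^{\alpha/2}(\phi\eta_R)$ differs from $(-\Delta)^{\alpha/2}\phi$ nonlocally, and one must estimate $\|(-\Delta)^{\alpha/2}(\phi\eta_R-\phi)\|_{L^1}$ using both the decay of $\phi$ and the $|x|^{-n-\alpha}$ decay of the fractional-Laplacian kernel. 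This is true but deserves a paragraph, not a parenthetical. Neither point is a fatal gap; the testing-with-bumps step, the Fatou argument on $B(x_0,\tfrac12\lambda^{-1/\alpha})$, and the passage $\lambda\to\infty$ at the end are all sound, and the modifications for $\alpha=n$ and $\alpha>n$ (logarithmic, resp.\ bounded, kernel at the origin) are indeed cosmetic.
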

\indent Here a real-valued measurable function $V$ is said to lie in Kato Class $K_{\alpha}$ ($0<\alpha<\infty$) if 
$$\lim_{\delta\rightarrow0}\sup_{x\in\mathbb{R}^{n}}\int_{|x-y|<\delta}\omega_{\alpha}(x-y)|V(y)|dy=0\quad\text{for}\ 0<\alpha\leq n$$
and $\sup_{x\in\mathbb{R}^{n}}\int_{|x-y|<1}|V(y)|dy<\infty$ for $\alpha>n$, where
$$\omega_{\alpha}(x)=
\begin{cases}
    |x|^{\alpha-n}\quad0<\alpha<n\\
    \text{-ln}|x|\ \ \ \ \alpha=n\\
    1\quad\ \ \ \ \ \ \ \alpha>n.
\end{cases}$$
\indent In the case $p=2$, Maz'ya and Verbitsky \cite{MV05} found a complete characterization of $(1.1)$ for in $L^{2}$ by using capacity, localization estimates and a Carleson condition. Especially, in the case $V$ is a non-negative Borel measure, $V=\mu\in M^{+}(\mathbb{R}^{n})$, they have shown the following.
\begin{thmB}
\textnormal{(Maz'ya and Verbitsky \cite{MV05})} Suppose $\mu\in M^{+}(\mathbb{R}^{n})$ and $n\geq2$. Then the followings are equivalent.\\
\textnormal{(i)} For any $\epsilon>0$, there exists $C(\epsilon)>0$ such that the inequality
$$\int_{\mathbb{R}^{n}}|\phi(x)|^{2}d\mu(x)\leq\epsilon\|\nabla \phi\|^{2}_{L^{2}}+C({\epsilon})\|\phi\|^{2}_{L^{2}},\quad\forall\phi\in C_{c}^{\infty}.$$
\textnormal{(ii)}$$\lim_{\delta\rightarrow0}\sup_{x_{0}\in\mathbb{R}^{n}}\sup\left\{\int_{\mathbb{R}^{n}}|\phi(x)|^{2}d\mu(x):\phi\in C_{c}^{\infty}(B(x_{0},\delta)),\ \|\nabla\phi\|_{L^{2}}\leq1\right\}=0$$
\textnormal{(iii)}
$$\lim_{\delta\rightarrow0}\sup\left\{\frac{\mu(E)}{\textnormal{cap}(E)}:\textnormal{cap}(E)>0\ \&\ \textnormal{diam}(E)\leq\delta\right\}=0,$$
where
$$\textnormal{cap}(E):=\inf\left\{\int_{\mathbb{R}^{n}}f(x)^{2}dx:f\geq0,\ G_{1}*f\geq1\text{ on }E\right\}.$$\textnormal{
(iv)}$$\lim_{\delta\rightarrow0}\sup\left\{\frac{1}{\mu(Q)}\sum\limits_{\substack{Q^{\prime}\subseteq Q\\Q^{\prime}\in\mathcal{D}}}|Q^{\prime}|^{\frac{2}{n}-1}\mu(Q^{\prime})^{2}:Q\in\mathcal{D},\ \textnormal{diam}(Q)\leq\delta\right\}=0,$$
where $\mathcal{D}$ denotes the collection of all dyadic cubes in $\mathbb{R}^{n}.$
\end{thmB}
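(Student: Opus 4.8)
The plan is to prove the four conditions pairwise equivalent along the chain $(i)\Leftrightarrow(ii)\Leftrightarrow(iii)\Leftrightarrow(iv)$, the essential point throughout being to transfer not just the inequalities but the \emph{infinitesimal} quantifiers ($\epsilon\to0$ in $(i)$, $\delta\to0$ in $(ii)$--$(iv)$). \textbf{Step 1: $(i)\Leftrightarrow(ii)$.} For $(i)\Rightarrow(ii)$, restrict $\phi$ to $C_c^\infty(B(x_0,\delta))$ and use the Poincaré inequality $\|\phi\|_{L^2}\le c_n\delta\,\|\nabla\phi\|_{L^2}$ to rewrite the right side of $(i)$ as $(\epsilon+c_n^2\delta^2 C(\epsilon))\|\nabla\phi\|_{L^2}^2$; fixing $\epsilon$ first and then letting $\delta\to0$ forces the supremum $A(\delta)$ in $(ii)$ to satisfy $\limsup_{\delta\to0}A(\delta)\le2\epsilon$, hence $A(\delta)\to0$. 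For $(ii)\Rightarrow(i)$, fix a smooth partition of unity $\{\psi_j\}$ subordinate to a cover $\{B(x_j,\delta)\}$ of bounded overlap $N=N(n)$ with $\sum_j\psi_j^2\equiv1$ and $\|\nabla\psi_j\|_\infty\le c/\delta$; then $\int|\phi|^2\,d\mu=\sum_j\int|\phi\psi_j|^2\,d\mu\le A(\delta)\sum_j\|\nabla(\phi\psi_j)\|_{L^2}^2\le2A(\delta)\,\|\nabla\phi\|_{L^2}^2+2A(\delta)N(c/\delta)^2\|\phi\|_{L^2}^2$, and given $\epsilon$ one chooses $\delta$ with $2A(\delta)\le\epsilon$ and sets $C(\epsilon)=\epsilon N(c/\delta)^2$.

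\textbf{Step 2: $(ii)\Leftrightarrow(iii)$.} The engine is Maz'ya's strong-type capacitary inequality $\int_0^\infty\mathrm{cap}(\{|\phi|>t\})\,d(t^2)\le c\,\|\nabla\phi\|_{L^2}^2$ for $\phi\in C_c^\infty$, together with the fact that on sets of small diameter the Bessel capacity $\mathrm{cap}$ of $(iii)$ is comparable to the variational capacity $E\mapsto\inf\{\|\nabla g\|_{L^2}^2:g\in C_c^\infty,\ g\ge1\text{ near }E\}$ (by Poincaré the $L^2$-part of the Bessel norm is negligible at small scales); this comparison is where $n\ge2$ enters, the order-$1$ Riesz capacity being the wrong object for $n\le2$, which is why the kernel $G_1$ is used. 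If $(iii)$ holds with $A(\delta):=\sup\{\mu(E)/\mathrm{cap}(E):\mathrm{cap}(E)>0,\ \mathrm{diam}(E)\le\delta\}\to0$, then for $\phi\in C_c^\infty(B(x_0,\delta))$ every $\{|\phi|>t\}$ has diameter $\le2\delta$, so the layer-cake formula gives $\int|\phi|^2\,d\mu=\int_0^\infty\mu(\{|\phi|>t\})\,d(t^2)\le A(2\delta)\int_0^\infty\mathrm{cap}(\{|\phi|>t\})\,d(t^2)\le cA(2\delta)\,\|\nabla\phi\|_{L^2}^2$, which is $(ii)$. Conversely, given compact $E$ with $\mathrm{diam}(E)\le\delta$, take a near-extremal $g\in C_c^\infty$ for the variational capacity with $g\ge1$ near $E$ and $\mathrm{supp}\,g$ of diameter $\le3\delta$; then $\mu(E)\le\int|g|^2\,d\mu\le B(3\delta)\,\|\nabla g\|_{L^2}^2\le cB(3\delta)\bigl(\mathrm{cap}(E)+\mathrm{o}(1)\bigr)$, where $B(\cdot)$ is the quantity in $(ii)$, and $\delta\to0$ yields $(iii)$.

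\textbf{Step 3: $(iii)\Leftrightarrow(iv)$.} Writing $\ell(Q')=|Q'|^{1/n}$ for the side length, one has $|Q'|^{2/n-1}\mu(Q')^2=\ell(Q')^{2-n}\mu(Q')\cdot\mu(Q')$, so the inner sum in $(iv)$ equals the dyadic Wolff energy $\int_Q\mathbf{W}_Q^\mu\,d\mu$ with $\mathbf{W}_Q^\mu(x)=\sum_{Q'\ni x,\ Q'\subseteq Q}\ell(Q')^{2-n}\mu(Q')$, and $(iv)$ asserts that this energy normalized by $\mu(Q)$ is uniformly small over dyadic cubes of small diameter. The equivalence with $(iii)$ rests on two classical facts: (a) the nonlinear potential-theoretic characterization (Maz'ya--Havin, Hedberg--Wolff, Kerman--Sawyer, and in dyadic form Maz'ya--Verbitsky) that $\mu(E)\le C\,\mathrm{cap}(E)$ for all compact $E$ is equivalent to the testing condition $\int_Q\mathbf{W}_{1,2}^\mu\,d\mu\le C\mu(Q)$ for all cubes $Q$, where $\mathbf{W}_{1,2}^\mu(x)=\int_0^{1}\mu(B(x,r))\,r^{1-n}\,dr$; and (b) the routine comparison of the continuous Wolff potential with its dyadic analogue, obtained by discretizing the radial integral over dyadic annuli and averaging over a bounded number of shifted dyadic grids. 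Since the cubes in $(iv)$ and the sets in $(iii)$ have diameter $\le\delta$, only scales $\lesssim\delta$ contribute to the relevant potentials, so the localized ($\delta\to0$) forms of the two conditions match under the same dictionary, the uniform smallness being carried across in both directions.

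\textbf{Main obstacle.} The crux is Step 3: translating the capacitary condition into the dyadic Carleson/Wolff sum uses nonlinear potential theory (the Wolff inequality for $\mathrm{Cap}_{1,2}$) rather than the linear $L^2$ Hilbert-space arguments of Steps 1--2, and one must check carefully that the truncation to small scales is honored on both sides so that the $\delta\to0$ quantifiers genuinely correspond. A secondary technical point, in Step 2, is the need for capacitary extremals supported in sets of controlled small diameter and for the Bessel-versus-variational capacity comparison to be uniform down to scale zero — exactly where the hypothesis $n\ge2$ is indispensable.
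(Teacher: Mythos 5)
The paper does not actually prove Theorem B; it is cited from Maz'ya--Verbitsky \cite{MV05} as the motivating $p=2$, $\alpha=1$, unweighted prototype for the paper's own Theorems 1.1 and 1.4, so there is no in-paper proof to compare with directly. The natural comparison is to those two generalizations. Your Step 1 is essentially the paper's proof of Theorem 1.4 specialized to $m=1$: Poincar\'e on a ball for necessity, a lattice partition of unity for sufficiency (the paper invokes weighted Gagliardo--Nirenberg because it must handle $m>1$; for $m=1$ your product rule suffices). Your Step 2 is the classical strong-type capacitary inequality argument. Step 3 is where you and the paper diverge genuinely: you pass from (iii) to (iv) through the nonlinear potential theory of Wolff potentials, whereas the paper never connects capacity and the Carleson condition directly. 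Instead the paper reduces the infinitesimal boundedness (i) to the vanishing of the resolvent norm $\|V(\lambda^{2}-\Delta)^{-\alpha/2}\|_{L^{p}(w)\to L^{p}(w)}$ as $\lambda\to\infty$ (Lemma 3.1), compares the resolvent to the Carleson/sparse sum via the local fractional maximal operator and sparse domination (Lemmas 3.3--3.6, Proposition 3.2), and then relates the Carleson condition to capacity by a separate duality-plus-weak-type argument (Section 3.3). The resolvent-plus-sparse route is what carries the result to $p\ne 2$ and $A_p$ weights; your Wolff-potential route is closer to the original \cite{MV05} argument and more purely potential-theoretic, but does not generalize as cleanly.

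Two points in your sketch are genuine gaps in rigor. In Step 2, direction (ii)$\Rightarrow$(iii), you take a ``near-extremal $g\in C_c^\infty$ for the variational capacity ... with $\mathrm{supp}\,g$ of diameter $\le 3\delta$,'' but the Bessel potential $G_1*f$ of a near-minimizing $f$ is never compactly supported; truncating it to scale $O(\delta)$ requires a separate estimate showing the energy increases only by a factor $1+o(1)$ as $\delta\to0$. The Poincar\'e-type comparison between the $G_1$-capacity and the pure-gradient variational capacity at small scales does provide this, but it must be written out --- and this is exactly where $n\ge2$ and the choice of the Bessel rather than the Riesz kernel matter, as you correctly flag without carrying through. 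In Step 3, you acknowledge that discretizing the continuous Wolff potential to the dyadic sum in (iv) requires averaging over shifted dyadic grids, yet (iv) is stated for a single collection $\mathcal{D}$; one either interprets $\mathcal{D}$ as the union of shifts (as the paper does with $\mathcal{D}=\bigcup_t\mathcal{D}^t$) or supplies a maximal-function argument, and the $\delta\to0$ quantifier must then be tracked through the scale truncation on both sides. This is the heaviest technical load of your chain, and the proposal only gestures at it.
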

\indent A natural question is whether we can characterize the fractional Schr\"{o}dinger operator $H=(-\Delta)^{\alpha/2}+V$. Cao, Deng and Jin \cite{CDJ25} provided the sufficiency and necessity condition for (1.1) for the fractional Schr\"{o}dinger operator with power type potential.
\begin{thmC}
\textnormal{(Cao, Deng and Jin \cite{CDJ25})} Let $1<p<\infty$, $0<\alpha<n$ and $-n/p<a<\infty$. Then for any $\epsilon>0$, there exists $C(\epsilon)>0$ such that for any $\phi\in C_{c}^{\infty},$
\begin{align*}   \||x|^{a}\phi\|_{L^{p}}\leq\epsilon\|(-\Delta)^{\frac{\alpha}{2}}\phi\|_{L^{p}}+C(\epsilon)\|\phi\|_{L^{p}}\tag{1.3}
\end{align*}
holds if and only if $-\alpha<a\leq0.$
\end{thmC}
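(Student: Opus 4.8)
The dilation symmetry of $(-\Delta)^{\alpha/2}$ carries most of the load. For $\phi_\lambda(x):=\phi(\lambda x)$ one has $(-\Delta)^{\alpha/2}\phi_\lambda=\lambda^{\alpha}\big[(-\Delta)^{\alpha/2}\phi\big](\lambda\,\cdot\,)$, so by the elementary $L^p$--rescaling identities the inequality $(1.3)$ applied to $\phi_\lambda$ is, after multiplying through by $\lambda^{n/p}$, equivalent to
\[
\lambda^{-a}\,\||x|^a\phi\|_{L^p}\ \le\ \epsilon\,\lambda^{\alpha}\,\|(-\Delta)^{\alpha/2}\phi\|_{L^p}+C(\epsilon)\,\|\phi\|_{L^p},\qquad\lambda>0.
\]
For necessity, assume $(1.3)$ and fix a nonzero $\phi\in C_c^\infty$. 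If $a>0$, divide by $\lambda^{-a}$ and let $\lambda\to0^+$: the exponents $\alpha+a$ and $a$ are both positive, so the right side tends to $0$, forcing $\||x|^a\phi\|_{L^p}=0$, absurd. If $a\le-\alpha$ --- which together with $a>-n/p$ forces $\alpha p<n$, so the norms below are finite --- let $\lambda\to\infty$ instead: for $a<-\alpha$ the exponents $\alpha+a,a$ are negative and the right side again vanishes, while for $a=-\alpha$ the inequality becomes $\||x|^{-\alpha}\phi\|_{L^p}\le\epsilon\|(-\Delta)^{\alpha/2}\phi\|_{L^p}+C(\epsilon)\lambda^{-\alpha}\|\phi\|_{L^p}$, and $\lambda\to\infty$ yields $\||x|^{-\alpha}\phi\|_{L^p}\le\epsilon\|(-\Delta)^{\alpha/2}\phi\|_{L^p}$ for every $\epsilon>0$, impossible. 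Hence $-\alpha<a\le0$.

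For sufficiency the case $a=0$ is trivial, so let $-\alpha<a<0$. Running the displayed equivalence in the other direction, it suffices to prove the scale-free estimate
\begin{equation}
\||x|^a\phi\|_{L^p}\ \le\ C_0\big(\|(-\Delta)^{\alpha/2}\phi\|_{L^p}+\|\phi\|_{L^p}\big),\qquad\phi\in C_c^\infty,
\tag{$\ast$}
\end{equation}
because applying $(\ast)$ to $\phi_\lambda$ gives $\||x|^a\phi\|_{L^p}\le C_0\big(\lambda^{\alpha+a}\|(-\Delta)^{\alpha/2}\phi\|_{L^p}+\lambda^a\|\phi\|_{L^p}\big)$ for all $\lambda>0$, and since $\alpha+a>0>a$ the choice $C_0\lambda^{\alpha+a}=\epsilon$ produces $(1.3)$ --- indeed with $C(\epsilon)$ of power type $\epsilon^{-\beta}$, $\beta=|a|/(\alpha+a)$, i.e.\ an estimate of the form $(1.2)$.

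To prove $(\ast)$ I would represent $\phi=\mathcal G_\alpha*g$ with $g:=(I-\Delta)^{\alpha/2}\phi$, using the classical Bessel-potential norm equivalence $\|g\|_{L^p}\approx\|\phi\|_{L^p}+\|(-\Delta)^{\alpha/2}\phi\|_{L^p}$ for $1<p<\infty$ and the kernel bounds $0\le\mathcal G_\alpha(x)\lesssim|x|^{\alpha-n}$ for $|x|\le1$ (here $0<\alpha<n$ is used) and $\mathcal G_\alpha(x)\lesssim e^{-c|x|}$ for $|x|\ge1$. From $|\phi|\le\mathcal G_\alpha*|g|$ split $\mathcal G_\alpha=\mathcal G_\alpha\mathbf 1_{\{|x|\le1\}}+\mathcal G_\alpha\mathbf 1_{\{|x|>1\}}$. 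The far part lies in $L^1\cap L^{p'}$, so $\mathcal G_\alpha\mathbf 1_{\{|x|>1\}}*|g|$ is $\lesssim\|g\|_{L^p}$ both in $L^\infty$ (Hölder) and in $L^p$ (Young); combined with $\||x|^a\|_{L^p(\{|x|\le1\})}<\infty$ --- this is where $a>-n/p$ enters --- and $|x|^a\le1$ for $|x|>1$, this piece contributes $\lesssim\|g\|_{L^p}$ to $\||x|^a\phi\|_{L^p}$. The near part is dominated by the truncated Riesz potential $T_\alpha|g|$, $T_\alpha h(x):=\int_{|x-y|\le1}|x-y|^{\alpha-n}h(y)\,dy$, which maps $L^p\to L^p$ by Young since $|\cdot|^{\alpha-n}\mathbf 1_{\{|\cdot|\le1\}}\in L^1$; hence the region $|x|\ge2$, where $|x|^a$ is bounded, is under control, and on $|x|\le2$ one has $T_\alpha|g|=T_\alpha(|g|\mathbf 1_{\{|y|\le4\}})\lesssim I_\alpha(|g|\mathbf 1_{\{|y|\le4\}})$ with $I_\alpha$ the Riesz potential. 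Everything is thus reduced to the local Stein--Weiss bound
\[
\big\||x|^a\,I_\alpha h\big\|_{L^p(\{|x|\le2\})}\ \lesssim\ \|h\|_{L^p}\qquad\text{whenever }\operatorname{supp}h\subseteq\{|y|\le4\}.
\]

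This last estimate is the crux. When $\alpha p<n$ I would apply Hölder with $\tfrac1p=\tfrac1s+\tfrac1q$, $s=n/\alpha$, $\tfrac1q=\tfrac1p-\tfrac\alpha n$, together with the Hardy--Littlewood--Sobolev bound $\|I_\alpha h\|_{L^q}\lesssim\|h\|_{L^p}$; the leftover factor $\||x|^a\|_{L^{n/\alpha}(\{|x|\le2\})}$ is finite \emph{exactly} when $a>-\alpha$, which is where that hypothesis is forced. When $\alpha p>n$, boundedness of $I_\alpha h$ on $\{|x|\le2\}$ (valid because $\operatorname{supp}h$ is bounded) gives $\||x|^aI_\alpha h\|_{L^p(\{|x|\le2\})}\lesssim\||x|^a\|_{L^p(\{|x|\le2\})}\|h\|_{L^p}$, finite since $a>-n/p$; the borderline $\alpha p=n$ follows by interpolation, using $I_\alpha\colon L^p\to L^r_{\mathrm{loc}}$ for all finite $r$. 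I expect this local weighted Riesz-potential estimate --- and in particular tracking which of the two constraints $a>-\alpha$, $a>-n/p$ is the binding one in each of the regimes $\alpha p<n$, $\alpha p=n$, $\alpha p>n$ --- to be the only genuinely delicate step; the rest is symmetry bookkeeping and Young's inequality.
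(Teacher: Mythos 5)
Your proof is correct, and it takes a genuinely different route from the one the paper (following Cao--Deng--Jin \cite{CDJ25}) relies on. The cited proof, which the present paper re-runs in the weighted setting in Corollary 1.3 and Section 3.2, goes through Lemma 3.1 (reducing $(1.3)$ to $\|V(\lambda^{2}-\Delta)^{-\alpha/2}\|_{L^{p}\to L^{p}}\to 0$), then controls the Bessel kernel by the local fractional maximal operator and its sparse/dyadic version (Lemmas 3.3, 3.5, 3.6), and finally verifies the discrete Carleson condition (ii) of Theorem 1.1 by splitting dyadic cubes according to whether they meet a ball around the origin and estimating $\mu w(Q)$, $\sigma(Q)$ cube by cube. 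Your proposal instead exploits the exact dilation homogeneity of both $(-\Delta)^{\alpha/2}$ and $|x|^{a}$: the $\lambda$-scaling immediately yields necessity of $-\alpha<a\le 0$, and reduces sufficiency to a single scale-invariant bound $(\ast)$, which you then prove by the classical Bessel-potential representation $\phi=G_{\alpha}*g$, the near/far kernel split, and Hardy--Littlewood--Sobolev (in three regimes $\alpha p<n$, $=n$, $>n$, with the correct identification of which of $a>-\alpha$ and $a>-n/p$ is binding in each). This is more elementary and also immediately gives the sharper Trudinger-type constant $C(\epsilon)\sim\epsilon^{-|a|/(\alpha+a)}$, cleanly matching what the paper's Theorem 1.5 predicts. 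The trade-off is that the scaling shortcut is tied to the exact homogeneity of the power potential; the paper's Carleson/sparse machinery is heavier but is what survives when $V$ is a general measure or the norms carry a Muckenhoupt weight, which is the point of the paper. Both arguments are sound; yours is the lighter and more transparent one for this particular special case.
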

\indent Their proof is based on the fact that the infinitesimal boundedness (1.3) holds if and only if a specific Carleson condition similar to (iv) in Theorem B holds. They applied the sparse domination technique to control the Bessel potential $G_{\alpha,\lambda}$. Recently, Hatano, Kawasumi, Saito and Tanaka \cite{HKST25} generalized such result to the asymmetric cases $p\neq q$.\\
\indent To elaborate on the main result, we first introduce some basic notations for weighted capacity. Let $1<p<\infty$, $0<\alpha<n$ and $w$ be a weight with $\sigma=w^{1-p^{\prime}}\in A_{\infty}$. We define the weighted Riesz capacity for $E\subseteq\mathbb{R}^{n}$
\begin{equation}
R^{w}_{\alpha,p}(E):=\inf\left\{\int_{\mathbb{R}^{n}} f(x)^{p}w(x)dx:\ 0\leq f\in L^{p}(w),\ I_{\alpha}f\geq1\ on\ E\right\},\tag{1.4}
\end{equation}
where $I_{\alpha}f(x):=\int_{\mathbb{R}^{n}}|f(y)||x-y|^{\alpha-n}dy.$ Similarly, the weighted Bessel capacity for $E$ is defined by 
$$B^{w}_{\alpha,p}(E):=\inf\left\{\int_{\mathbb{R}^{n}} f(x)^{p}w(x)dx:\ 0\leq f\in L^{p}(w),\ G_{\alpha}*f\geq1\ on\ E\right\},$$
where $G_{\alpha}$ is the Bessel function. For any $f\in L^{p}(w)$, it is clear that $I_{\alpha}(G_{\alpha})*|f|\geq I_{\alpha}(G_{\alpha})*f^{+}$ ($f^{+}=(f+|f|)/2$) and $\int_{\mathbb{R}^{n}}|f(x)|^{p}w(x)dx\geq\int_{\mathbb{R}^{n}}f^{+}(x)^{p}w(x)dx$, which implies that the unnatural assumption $f\geq0$ in the definitions above can indeed be removed. Nevertheless, for the sake of simplicity, we maintain this definition below. Our main theorem is as follows.
\begin{thm}
    Let $1<p<\infty$, $0<\alpha<n$, $w\in A_{p}$ and $0\leq V\in L^{p}_{loc}\cap L^{p}_{loc}(w)\cap L^{p}_{loc}(\sigma)$ with $\sigma:=w^{1-p^{\prime}}$. For $\textnormal{(i)}\sim\textnormal{(ii)}$ below, we have $\textnormal{(i)}\leftrightarrow\textnormal{(ii)}$ and $\textnormal{(iii)}\rightarrow\textnormal{(ii)}$. Furthermore, if $\alpha<n/p$ and $\sigma\in A_{1}$, then $\textnormal{(ii)}\rightarrow\textnormal{(iii)}$.\\
\textnormal{(i)} For any $\epsilon>0$, there exists $C(\epsilon)>0$ such that for any $\phi\in C_{c}^{\infty}$, 
\begin{equation}
    \|V\phi\|_{L^{p}(w)}^{p}\leq\epsilon\|(-\Delta)^{\frac{\alpha}{2}}\phi\|_{L^{p}(w)}^{p}+C(\epsilon)\|\phi\|_{L^{p}(w)}^{p}.\notag
\end{equation}
\textnormal{(ii)} With $d\mu w:=|V|^{p}wdx$, 
\begin{equation}
\lim_{\delta\rightarrow0}\sup_{\mathcal{S}}\sup\limits_{\substack{\ell(Q)\leq\delta\\Q\in\mathcal{S}}}\left\{\frac{1}{\mu w(Q)}\sum\limits_{\substack{Q^{\prime}\in\mathcal{S}\\Q^{\prime}\subseteq Q}}|Q^{\prime}|^{p^{\prime}\left(\frac{\alpha}{n}-1\right)}\mu w(Q^{\prime})^{p^{\prime}}\sigma(Q^{\prime})\right\}=0,\tag{1.5}
\end{equation}
where the first supremum is taken oven all sparse collections $\mathcal{S}$.\\
\textnormal{(iii)}$$\lim_{\delta\rightarrow0}\sup_{\text{diam}(E)\leq\delta}\left\{\frac{\mu w(E)}{B^{w}_{\alpha,p}(E)}\right\}=0.$$
\end{thm}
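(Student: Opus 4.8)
The plan is to run the whole argument through the Riesz potential $I_{\alpha}$ and a dual description of the weighted Bessel capacity, reducing the differential inequality in (i) first to a \emph{localized} two-weight trace inequality for $I_{\alpha}$ and then to the Carleson--Wolff sum in (ii). Write $d\nu:=d\mu w=|V|^{p}w\,dx$, so that (i) reads $\int|\phi|^{p}\,d\nu\le\epsilon\|(-\Delta)^{\alpha/2}\phi\|_{L^{p}(w)}^{p}+C(\epsilon)\|\phi\|_{L^{p}(w)}^{p}$. Since $w\in A_{p}$, both $(1+|\xi|^{2})^{\alpha/2}/(1+|\xi|^{\alpha})$ and its reciprocal satisfy the H\"ormander multiplier condition, so $\|(-\Delta)^{\alpha/2}\phi\|_{L^{p}(w)}+\|\phi\|_{L^{p}(w)}\asymp\|(I-\Delta)^{\alpha/2}\phi\|_{L^{p}(w)}$ and (i) is equivalent to the same inequality with the Bessel operator in place of $(-\Delta)^{\alpha/2}$ (after adjusting $C(\epsilon)$). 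For $\phi\in C_{c}^{\infty}$ supported in a cube $Q$ with $\ell(Q)\le\delta$, a weighted fractional Poincaré inequality (valid for $w\in A_{p}$) gives $\|\phi\|_{L^{p}(w)}^{p}\lesssim\delta^{c}\|(-\Delta)^{\alpha/2}\phi\|_{L^{p}(w)}^{p}$ for some $c=c(\alpha,n,p)>0$, so the lower-order term $C(\epsilon)\|\phi\|_{L^{p}(w)}^{p}$ is absorbed into $\epsilon\|(-\Delta)^{\alpha/2}\phi\|_{L^{p}(w)}^{p}$ once $\delta$ is small enough; conversely, patching a general $\phi$ from pieces supported in $\delta$-cubes requires weighted localization estimates for $G_{\alpha}$ (on $2Q$ one has $G_{\alpha}*f\asymp I_{\alpha}(f\mathbf 1_{Q})$ when $\ell(Q)\le\delta$, and the contribution of $G_{\alpha}*f$ away from $2Q$ is negligible at small scales), in the spirit of Maz'ya--Verbitsky. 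The outcome I would establish is that (i) is equivalent to the \emph{vanishing localized trace inequality}: for every $\epsilon>0$ there is $\delta>0$ with $\int_{Q}(I_{\alpha}f)^{p}\,d\nu\le\epsilon\int_{Q}f^{p}w\,dx$ for all cubes $Q$ with $\ell(Q)\le\delta$ and all $0\le f\in L^{p}(w)$ supported in $Q$. This localization is the first technical core, and, as in the $L^{2}$ theory, its patching direction is the most delicate point.

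With $\sigma=w^{1-p'}$ the substitution $f=g\sigma$ turns $\int f^{p}w\,dx$ into $\int g^{p}\,d\sigma$ and recasts the localized trace inequality as a norm bound for the positive operator $g\mapsto I_{\alpha}(g\sigma\mathbf 1_{Q})\colon L^{p}(\sigma|_{Q})\to L^{p}(\nu|_{Q})$. Sparse domination of $I_{\alpha}$ together with the two-weight theory for sparse operators (Sawyer-type testing conditions and their discrete reformulation through Wolff's inequality, following Cascante--Ortega--Verbitsky and the sparse-domination method of Cao--Deng--Jin) characterize finiteness of this norm by the dual testing quantity $\nu(Q)^{-1}\int_{Q}\big(I_{\alpha}(\nu\mathbf 1_{Q})\big)^{p'}\,d\sigma$; discretizing $I_{\alpha}(\nu\mathbf 1_{Q})(x)\asymp\sum_{Q'\ni x,\ Q'\subseteq Q}\nu(Q')\,\ell(Q')^{\alpha-n}$ and summing over the sparse family produced by the domination identifies this, up to absolute constants, with $\nu(Q)^{-1}\sum_{Q'\in\mathcal S,\ Q'\subseteq Q}|Q'|^{p'(\alpha/n-1)}\nu(Q')^{p'}\sigma(Q')$. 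Tracking the constants through the localization of the first paragraph in both directions then gives $\text{(i)}\leftrightarrow\text{(ii)}$.

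For $\text{(iii)}\Rightarrow\text{(ii)}$ I would invoke a weighted capacitary strong-type inequality: the inner sum in (ii) is comparable to the weighted Wolff energy $\int W^{\sigma}_{\alpha,p'}[\nu\mathbf 1_{Q}]\,d\nu$, and a capacitary decomposition over the level sets of $W^{\sigma}_{\alpha,p'}[\nu\mathbf 1_{Q}]$ bounds it by $\lesssim\sum_{k}2^{k}\,B^{w}_{\alpha,p}\big(\{W^{\sigma}_{\alpha,p'}[\nu\mathbf 1_{Q}]>2^{k}\}\big)$; inserting the hypothesis $\nu(E)\le\epsilon\,B^{w}_{\alpha,p}(E)$ for $\operatorname{diam}(E)\le\delta$ and absorbing the resulting copy of the energy yields $\int W^{\sigma}_{\alpha,p'}[\nu\mathbf 1_{Q}]\,d\nu\lesssim\epsilon\,\nu(Q)$ whenever $\ell(Q)\le\delta$, which is (ii). This is the weighted counterpart of the classical equivalence between a trace inequality and the domination $\nu(E)\lesssim\operatorname{cap}(E)$; the comparability of the Wolff energy with the Carleson sum and the capacitary strong-type estimate, in the weighted setting, form the second technical core.

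For $\text{(ii)}\Rightarrow\text{(iii)}$ when $\alpha<n/p$ and $\sigma\in A_{1}$, the definition of $B^{w}_{\alpha,p}$ together with $L^{p}$--$L^{p'}$ duality (note $w^{-p'/p}=\sigma$) gives the dual formula $B^{w}_{\alpha,p}(E)=\sup\big\{\rho(E)^{p}\big/\|G_{\alpha}*\rho\|_{L^{p'}(\sigma)}^{p}\colon\operatorname{supp}\rho\subseteq E\big\}$; testing with $\rho=\nu\mathbf 1_{E}$ gives $\nu(E)/B^{w}_{\alpha,p}(E)\le\big(\nu(E)^{-1}\|G_{\alpha}*(\nu\mathbf 1_{E})\|_{L^{p'}(\sigma)}^{p'}\big)^{p-1}$, and since $G_{\alpha}(x)\le C|x|^{\alpha-n}$ for all $x\neq0$ it suffices to bound $\int\big(I_{\alpha}(\nu\mathbf 1_{E})\big)^{p'}\sigma\,dx$ by $\epsilon\,\nu(E)$. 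Choosing a cube $Q\supseteq E$ with $\ell(Q)\asymp\operatorname{diam}(E)\le\delta$ and splitting, in the duality pairing, $I_{\alpha}(g\sigma)=I_{\alpha}(g\sigma\mathbf 1_{2Q})+I_{\alpha}(g\sigma\mathbf 1_{(2Q)^{c}})$: the local part is controlled by the vanishing localized trace inequality of the first paragraph, while the global part is estimated by H\"older and the geometric series $\sum_{j\ge1}(2^{j}\ell(Q))^{(\alpha-n)p'}\sigma(2^{j}Q)$, which \emph{converges exactly because $\alpha<n/p$} and is summed using $\sigma(2^{j}Q)\lesssim 2^{jn}\sigma(Q)$ (valid since $\sigma\in A_{1}$); the residual term $\ell(Q)^{(\alpha-n)p'}\sigma(Q)\,\nu(Q)^{p'-1}$ that this produces is $\le\epsilon$ by applying (ii) to the single-cube sparse family $\{Q\}$, and $\nu(E)\le\nu(Q)$ lets one convert it into $\le\epsilon\,\nu(E)$. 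Altogether $\int\big(I_{\alpha}(\nu\mathbf 1_{E})\big)^{p'}\sigma\,dx\lesssim\big(\epsilon^{p'/p}+\epsilon\big)\nu(E)$, so $\nu(E)/B^{w}_{\alpha,p}(E)\to0$ as $\delta\to0$, which is (iii); the two extra hypotheses enter precisely to make the tail series converge, and building the requisite weighted nonlinear potential theory ($W^{\sigma}_{\alpha,p'}$, the dual capacity formula, and the pointwise/energy estimates in the $A_{1}$ regime) is where I expect the remaining difficulty to lie.
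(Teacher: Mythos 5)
The route you take to $\textnormal{(i)}\leftrightarrow\textnormal{(ii)}$ is genuinely different from the paper's and it contains a real gap. You propose to prove $\textnormal{(i)}$ equivalent to a \emph{localized} trace inequality (over $\delta$-cubes), and then characterize the latter by the sparse Carleson sum. The direction ``localized trace inequality $\Rightarrow \textnormal{(i)}$'' requires patching a test function $\phi$ from pieces $\tau_i\phi$ via a partition of unity at scale $\delta$, and then controlling $\|(-\Delta)^{\alpha/2}(\tau_i\phi)\|_{L^p(w)}$ by $\|\tau_i(-\Delta)^{\alpha/2}\phi\|_{L^p(w)}$ plus lower-order terms. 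For non-integer $\alpha$ this commutator estimate is not available by Leibniz manipulations, and your assertion that ``the contribution of $G_\alpha*f$ away from $2Q$ is negligible at small scales'' is false: $G_\alpha$ decays exponentially only at the unit scale, so the off-diagonal part of $G_\alpha*f$ is not small when $\ell(Q)\le\delta\ll1$. (The paper itself flags this obstruction: in Theorem 1.4 the ``sufficiency'' patching direction is restricted to integer $m$ precisely because the cutoff argument requires classical derivatives.) The paper avoids the partition of unity entirely by Lemma 3.1, which shows $\textnormal{(i)}\Leftrightarrow\lim_{\lambda\to\infty}\|V(\lambda^2-\Delta)^{-\alpha/2}\|_{L^p(w)\to L^p(w)}=0$ purely through the operator identity $V(\lambda^2+(-\Delta)^{\alpha/2})^{-1}=V(\lambda^2-\Delta)^{-\alpha/2}\cdot(\lambda^2-\Delta)^{\alpha/2}(\lambda^2+(-\Delta)^{\alpha/2})^{-1}$ and the Mikhlin multiplier theorem; the localization scale $\lambda^{-1}$ is then built into the kernel $G_{\alpha,\lambda}$ rather than imposed by cutoffs, and Proposition 3.2 sparse-dominates that kernel directly. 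You should replace your first paragraph with this resolvent reduction.

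The rest of your outline is close in spirit to the paper. Your sparse/testing characterization of the trace operator is essentially Proposition 3.2 combined with Lemmas 3.3, 3.5, and 3.6, though the paper reaches it via a weak-type bound plus Marcinkiewicz interpolation rather than a full two-weight Sawyer theorem. For $\textnormal{(iii)}\Rightarrow\textnormal{(ii)}$ you invoke a weighted Wolff energy and a capacitary strong-type absorption; the paper instead estimates $\|G_\alpha*(\mu w\chi_{Q_0})\|_{L^{p'}(\sigma)}^{p'}$ from above by duality, a weak-type bound coming directly from the capacity hypothesis, and H\"older in Lorentz spaces, which avoids building the weighted Wolff potential machinery. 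Your $\textnormal{(ii)}\Rightarrow\textnormal{(iii)}$ sketch matches the paper's: splitting the dyadic sum into cubes inside $P_t^i$ and cubes containing $P_t^i$, using the Carleson condition for the first part and $\sigma\in A_1$ plus $\alpha<n/p$ to sum the geometric tail in the second — your placement of where the two extra hypotheses enter is exactly right.
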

\begin{rem}
     In fact, the sparse collection $\mathcal{S}$ in the condition $\textnormal{(ii)}$ can be replaced by
     \begin{equation}
\lim_{\delta\rightarrow0}\sup_{t}\sup\limits_{\substack{\ell(Q)\leq\delta\\Q\in\mathcal{D}^{t}}}\left\{\frac{1}{\mu w(Q)}\sum_{Q^{\prime}\in\mathcal{D}^{t}(Q)}|Q^{\prime}|^{p^{\prime}\left(\frac{\alpha}{n}-1\right)}\mu w(Q^{\prime})^{p^{\prime}}\sigma(Q^{\prime})\right\}=0,\tag{\textnormal{1.6}}
\end{equation}
where $\{D^{t}\}_{t\in\{0,1,2\}^{n}}$ are the families of shifted dyadic cubes. It is clear that the summation in $(1.5)$ can be bounded by the summation in $(1.6)$. Hence $\textnormal{(1.6)}\rightarrow(1.5)\rightarrow\textnormal{(i)}$. Conversely, we actually verify $\textnormal{(i)}\rightarrow(1.6)$ in the proof (see Section 3.1). Therefore, under the conditions of Theorem 1.1, \textnormal{(ii)} is also equivalent to $\textnormal{(i)}$.
\end{rem}
\indent The Bessel capacity $B^{w}_{\alpha,p}$ in $\textnormal{(iii)}$ can be replaced by the Riesz capacity $R_{\alpha,p}^{w}$ (see details in Section 3.2), even though the equivalence $B^{w}_{\alpha,p}(E)\sim R^{w}_{\alpha,p}(E)$ does not hold in general. However, as shown by Turesson \cite{T20}, for $w\in A_{p}$, the Bessel capacity $B^{w}_{\alpha,p}(E)$ can be viewed as a localized version of the Riesz capacity in the sense that
\begin{equation}
    B^{w}_{\alpha,p}(E)\sim R_{\alpha,p,1}^{w}(E),\tag{1.7}
\end{equation}
where $R_{\alpha,p,1}^{w}(E)$ is defined by (1.4) with $I_{\alpha,1}f:=\int_{|\cdot-y|\leq1}|f(y)||x-y|^{\alpha-n}dy$ in the place of $I_{\alpha}f$.
\begin{cor}
Let $1<p<\infty$, $0<\alpha<n/p$, $w$ be a weight with $\sigma:=w^{1-p^\prime}\in A_{1}$ and $0\leq V\in L^{p}_{loc}\cap L^{p}_{loc}(w)\cap L^{p}_{loc}(\sigma)$. If (1.1) holds for any $\phi\in C_{c}^{\infty}$, then
$$\lim_{\delta\rightarrow0}\sup\left\{\mu w(B(x_{0},\delta))\left(\int_{\delta}^{\infty}\frac{\sigma(B(x_{0},t))}{t^{(n-\alpha)p^{\prime}+1}}dt\right)^{p-1}:\ x_{0}\in\mathbb{R}^{n}\right\}=0.$$
\end{cor}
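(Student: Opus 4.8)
The plan is to read off the conclusion from condition (iii) of Theorem~1.1 applied to balls, together with an upper bound for the weighted Bessel capacity of a ball. First observe that the hypotheses here are a special case of those of Theorem~1.1: since $\sigma\in A_1\subset A_{p'}$ and $w=\sigma^{1-p}$, duality of Muckenhoupt classes gives $w\in A_p$, while the extra assumptions $\alpha<n/p$ and $\sigma\in A_1$ are exactly what Theorem~1.1 requires for the implication (ii)$\to$(iii). Hence (1.1), being condition (i), yields (iii), and specialising the sets there to closed balls (for which $\operatorname{diam}\overline{B(x_0,\delta)}\le 2\delta$) gives
\[
\lim_{\delta\to0}\ \sup_{x_0\in\mathbb{R}^n}\ \frac{\mu w\big(\overline{B(x_0,\delta)}\big)}{B^{w}_{\alpha,p}\big(\overline{B(x_0,\delta)}\big)}=0 .
\]

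The core of the proof is a uniform capacity estimate: there is a constant $C=C(n,\alpha,p,[\sigma]_{A_1})$ so that
\[
B^{w}_{\alpha,p}\big(\overline{B(x_0,r)}\big)\ \le\ C\left(\int_r^\infty\frac{\sigma(B(x_0,t))}{t^{(n-\alpha)p'+1}}\,dt\right)^{1-p}
\]
for every $x_0\in\mathbb{R}^n$ and every sufficiently small $r>0$. To prove it I would use the equivalence (1.7) to pass to the truncated Riesz capacity $R^{w}_{\alpha,p,1}$, and then exhibit an admissible function built on dyadic annuli. Set $A_j:=B(x_0,2^{j+1}r)\setminus B(x_0,2^{j}r)$ for $j\ge1$ and consider $f:=\sum_j b_j\,\sigma\,\mathbf 1_{A_j}/\sigma(A_j)$ with $b_j\ge0$ supported on the finitely many indices with $2^{j}r\le 1/4$. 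Two facts drive the computation: the identity $\sigma^{p}w=\sigma$, immediate from $\sigma=w^{1-p'}$, gives $\|f\|_{L^{p}(w)}^{p}=\sum_j b_j^{p}\,\sigma(A_j)^{1-p}$; and for $x\in B(x_0,r)$ one has $|x-y|\sim 2^{j}r\le1$ on $A_j$, whence $I_{\alpha,1}f(x)\gtrsim\sum_j(2^{j}r)^{\alpha-n}b_j$, so normalising this last sum to a fixed constant renders $f$ admissible. A routine Hölder/Lagrange optimisation of the $b_j$, where the exponent identity $(p-1)p'/p=1$ enters, then produces
\[
R^{w}_{\alpha,p,1}\big(\overline{B(x_0,r)}\big)\ \lesssim\ \Big(\sum_{j:\,2^{j}r\le 1/4}(2^{j}r)^{(\alpha-n)p'}\sigma(A_j)\Big)^{1-p} .
\]

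Since $t\mapsto t^{1-p}$ is decreasing, the capacity estimate reduces to the lower bound $\sum_{j:\,2^{j}r\le1/4}(2^{j}r)^{(\alpha-n)p'}\sigma(A_j)\gtrsim\int_r^\infty t^{-(n-\alpha)p'-1}\sigma(B(x_0,t))\,dt$, uniformly in $x_0$. For the range $r\le t\lesssim1$ I would group the annuli into blocks of a bounded number $N=N([\sigma]_{A_\infty})$ of consecutive scales: reverse doubling of the $A_\infty$ weight $\sigma$ forces the outermost annulus of each block to carry a fixed proportion of the mass of the corresponding ball, which turns the block's contribution into the integral over the matching range of $t$. For the tail $\int_1^\infty$, the $A_1$ hypothesis gives $\sigma(B(x_0,t))\lesssim [\sigma]_{A_1}\,t^{n}\,\sigma(B(x_0,1))$ for $t\ge1$, and $\alpha<n/p$ makes the resulting $t$-integral converge, so the tail is $\lesssim\sigma(B(x_0,1))\lesssim\sigma(A_{j_0})$ for the block with $2^{j_0}r\sim1$ and is absorbed by a single term of the sum. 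Combining this with the two earlier displays, for small $\delta$ and all $x_0$,
\[
\mu w\big(B(x_0,\delta)\big)\Big(\int_\delta^\infty\frac{\sigma(B(x_0,t))}{t^{(n-\alpha)p'+1}}\,dt\Big)^{p-1}\ \le\ C\,\frac{\mu w\big(\overline{B(x_0,\delta)}\big)}{B^{w}_{\alpha,p}\big(\overline{B(x_0,\delta)}\big)},
\]
obtained by multiplying and dividing by $B^{w}_{\alpha,p}(\overline{B(x_0,\delta)})$, invoking the capacity estimate and using $\mu w(B(x_0,\delta))\le\mu w(\overline{B(x_0,\delta)})$; taking the supremum over $x_0$ and letting $\delta\to0$ finishes the proof. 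The only nontrivial ingredient is the capacity estimate of the second paragraph, and within it the passage from the dyadic sum over annuli to the integral $\int_r^\infty$: this uses doubling of $\sigma$ in one direction and reverse doubling in the other, and it is exactly the $A_1$ growth bound $\sigma(B(x_0,t))\lesssim t^{n}$ for $t\ge1$ that both renders the integral finite and keeps every constant uniform in $x_0$ — under $A_\infty$ alone the integral may diverge; the rest is bookkeeping.
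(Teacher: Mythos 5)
Your proposal is correct and follows the same high-level route as the paper: verify $w\in A_p$ from $\sigma\in A_1$ by duality, invoke the implication $\textnormal{(i)}\rightarrow\textnormal{(iii)}$ of Theorem~1.1 (which is exactly what the extra hypotheses $\alpha<n/p$ and $\sigma\in A_1$ unlock), specialise to $E=B(x_0,\delta)$, and then translate the capacity in the denominator into the integral expression. Where you diverge is in the last step. The paper treats the capacity formula
\[
R^{w}_{\alpha,p}(B(x_0,\delta))\sim\left(\int_\delta^\infty\frac{\sigma(B(x_0,t))}{t^{(n-\alpha)p'+1}}\,dt\right)^{1-p}
\]
as a black box, citing Turesson's Theorem~3.3.10. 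You instead supply a self-contained proof of the one inequality actually needed (the upper bound on capacity), by exhibiting an explicit admissible $f$ built from $\sigma$-normalised bumps on dyadic annuli and optimising with Hölder; your exploitation of the algebraic identity $\sigma^p w=\sigma$ and the exponent identity $(p-1)p'/p=1$ are exactly the right book-keeping, and your treatment of the tail $\int_1^\infty$ via the $A_1$ growth bound $\sigma(B(x_0,t))\lesssim t^n\sigma(B(x_0,1))$ correctly identifies where both $\sigma\in A_1$ and $\alpha<n/p$ are used. This buys a self-contained argument at the cost of some extra doubling/reverse-doubling bookkeeping; the paper's citation is shorter and in fact gives the two-sided equivalence (which the corollary does not need). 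Both are valid; yours is more elementary and makes transparent the role of each hypothesis, while the paper's is more economical.

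One small point worth flagging: you passed through the truncated Riesz capacity $R^{w}_{\alpha,p,1}$ via (1.7), but a priori the paper's condition $\textnormal{(iii)}$ is stated with the Bessel capacity $B^{w}_{\alpha,p}$, so one should either use (1.7) as you do, or observe (as the paper's remark at the end of Section~3 does) that $B^{w}_{\alpha,p}$ may be replaced by $R^{w}_{\alpha,p}$ throughout. You have this covered, but it's the kind of step that deserves an explicit sentence in a final write-up.
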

\indent Corollary 1.2 follows by letting $E=B(x_{0},\delta)$ in Theorem 1.1 $\textnormal{(ii)}$ with $B^{w}_{\alpha,p}$ in the place of $R^{w}_{\alpha,p}$, and
$$R^{w}_{\alpha,p}(B(x_{0},\delta))\sim\left(\int_{\delta}^{\infty}\frac{\sigma(B(x_{0},t))}{t^{(n-\alpha)p^{\prime}+1}}dt\right)^{1-p}$$
for $w\in A_{p}$, see details in Turesson \cite[Theorem 3.3.10]{T20}.\\
\indent We then give examples of power weights $w(x)=|x|^{\theta}$ and potentials $V(x)=|x|^{a}$, for which Theorem 1.1 holds. This is a weighted analogy of Theorem C by Cao, Deng and Jin \cite{CDJ25}
\begin{cor}
Under the same assumptions as in Theorem 1.1, we take power weight $w(x)=|x|^{\theta}$ with $0\leq\theta<(n-1)(p-1)$ and potential $V(x)=|x|^{a}$ satisfying $a>-n/p+(p^{\prime}-1)\theta/p$. Then the infinitesimal boundedness \textnormal{(i)} in Theorem 1.1 holds if and only if $-\alpha<a\leq0$.
\end{cor}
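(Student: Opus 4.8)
The plan is to deduce Corollary 1.3 from the equivalence of (i) and (ii) in Theorem 1.1, used through the dyadic reformulation (1.6) of the Remark (so that (i) is equivalent to (1.6)), and then to carry out an explicit computation with $w(x)=|x|^{\theta}$ and $\sigma(x)=|x|^{-\theta/(p-1)}$. Before that I would record that the standing hypotheses hold: $w\in A_{p}$ and $\sigma\in A_{1}$ because $0\le\theta<(n-1)(p-1)<n(p-1)$; and $|V|^{p}w=|x|^{ap+\theta}$, $|V|^{p}\sigma=|x|^{ap-\theta/(p-1)}$, $|V|^{p}=|x|^{ap}$ are locally integrable exactly because $a>-n/p+(p'-1)\theta/p$ (the first and last being automatic since $\theta\ge0$). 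Since also $0<\alpha<n/p$, both Theorem 1.1 and Corollary 1.2 apply.

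For the necessity of $-\alpha<a\le0$ I would assume (i) and test the condition of Corollary 1.2 on two families of balls. Write $\Phi(x_{0},\delta):=\mu w(B(x_{0},\delta))\big(\int_{\delta}^{\infty}\sigma(B(x_{0},t))\,t^{-(n-\alpha)p'-1}\,dt\big)^{p-1}$; the inner integral converges because $\alpha<n/p$ forces $\alpha p<n+\theta$. Taking $x_{0}=0$, with $\mu w(B(0,\delta))\sim\delta^{ap+\theta+n}$ and $\sigma(B(0,t))\sim t^{n-\theta/(p-1)}$, all exponents cancel and $\Phi(0,\delta)\sim\delta^{p(a+\alpha)}$, so $\Phi(0,\delta)\to0$ forces $a+\alpha>0$. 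Letting instead $|x_{0}|\to\infty$ with $\delta$ fixed, the range $t<|x_{0}|$ dominates the $t$-integral and $\Phi(x_{0},\delta)\sim|x_{0}|^{ap}\delta^{\alpha p}$, which is unbounded in $x_{0}$ unless $a\le0$. Hence $-\alpha<a\le0$.

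For the sufficiency I would assume $-\alpha<a\le0$ and verify (1.6). Fix a shifted dyadic cube $Q$ with $\ell(Q)\le\delta$ and estimate $\mathcal{C}(Q):=\mu w(Q)^{-1}\sum_{Q'\in\mathcal{D}^{t}(Q)}|Q'|^{p'(\alpha/n-1)}\mu w(Q')^{p'}\sigma(Q')$, distinguishing two cases by the position of the singular point $0$. If $0\notin3Q$, then $|x|\sim d:=\mathrm{dist}(0,Q)$ throughout $Q$, so $\mu w(Q')\sim d^{ap+\theta}|Q'|$ and $\sigma(Q')\sim d^{-\theta/(p-1)}|Q'|$ for every dyadic $Q'\subseteq Q$; summing the geometric series in the sidelengths (convergent because $\alpha>0$) and dividing by $\mu w(Q)\sim d^{ap+\theta}\ell(Q)^{n}$ gives $\mathcal{C}(Q)\lesssim\ell(Q)^{\alpha p'}d^{ap'}\le\ell(Q)^{p'(\alpha+a)}$, where the last step uses $a\le0$ and $d\ge\ell(Q)$. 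If $0\in3Q$, then $\mu w(Q)\sim\ell(Q)^{ap+\theta+n}$; grouping the dyadic subcubes $Q'$ by sidelength $2^{-j}$ and dyadic distance $2^{-m}$ to the origin (there are $\sim2^{(j-m)n}$ of each type) leads to $\mathcal{C}(Q)\lesssim\mu w(Q)^{-1}\sum_{j\ge k}2^{-jp'\alpha}\sum_{m=k}^{j}2^{-m(n+ap+ap'+\theta)}$ with $2^{-k}=\ell(Q)$, and the condition $a>-\alpha$ (with $\alpha p<n+\theta$) makes $p'\alpha+(n+ap+ap'+\theta)>0$, so each sum is controlled by its extreme term and again $\mathcal{C}(Q)\lesssim\ell(Q)^{p'(\alpha+a)}$. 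In either case $\mathcal{C}(Q)\lesssim\ell(Q)^{p'(\alpha+a)}$ with $\alpha+a>0$, so the supremum over all such $Q$ (and over $t$) is $\lesssim\delta^{p'(\alpha+a)}\to0$; thus (1.6) holds, and hence (i).

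I expect the main obstacle to be the sufficiency estimate when $0\in3Q$: one must organize the doubly indexed sum over \emph{all} dyadic subcubes by both scale and distance to the singular point, and the argument closes only because the hypotheses keep every geometric series strictly on the convergent side --- $\alpha>0$ and $a>-\alpha$ fix the sign of the net exponent, $a\le0$ supplies uniformity over cubes arbitrarily far from the origin, and $a>-n/p+(p'-1)\theta/p$ together with $\theta<n(p-1)$ validates the power-type local asymptotics of $\mu w$ and $\sigma$ near $0$. A secondary but unavoidable bookkeeping point is to check that after dividing by $\mu w(Q)$ the exponents $ap+\theta+n$, $n-\theta/(p-1)$ and $\alpha p'$ recombine exactly into $p'(\alpha+a)$ in all cases.
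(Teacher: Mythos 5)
Your sufficiency argument is correct and takes a genuinely different route from the paper's: you verify the dyadic Carleson condition $(1.6)$ by a direct two-index geometric-series computation over the full dyadic grid $\mathcal{D}^{t}(Q)$, organizing the subcubes by scale $2^{-j}$ and dyadic distance $2^{-m}$ to the origin, whereas the paper works with $(1.5)$ over arbitrary sparse families and invokes the Fackler--Hyt\"{o}nen sparse-sum bound (Lemma 2.5) after separating cubes into ``type I'' (far from $0$) and ``type II'' (near $0$). Your bookkeeping, including the reduction of the net exponent to $p^{\prime}(\alpha+a)$ via $pp^{\prime}=p+p^{\prime}$ and $p(p^{\prime}-1)=p^{\prime}$ and the use of $a>-\alpha$ together with $n+ap+\theta>0$ to keep the $j$-sum convergent when $n+ap+ap^{\prime}+\theta<0$, checks out. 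What your approach buys is a more elementary, lemma-free estimate tailored to the power-weight case; what the paper's buys is that Lemma 2.5 handles arbitrary sparse families in one stroke.

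Your necessity argument, however, has a genuine gap. You route it through Corollary 1.2, and to do so you assert ``Since also $0<\alpha<n/p$, both Theorem 1.1 and Corollary 1.2 apply.'' But $\alpha<n/p$ is \emph{not} among the standing hypotheses of Corollary 1.3: the blanket assumption from Theorem 1.1 is only $0<\alpha<n$, and the implication $\textnormal{(ii)}\rightarrow\textnormal{(iii)}$ (hence Corollary 1.2) is a conditional extra that requires $\alpha<n/p$ and $\sigma\in A_{1}$. So for $n/p\le\alpha<n$ your necessity step simply does not apply. The paper avoids this by testing the Carleson condition $(1.5)$ directly with the single term $Q^{\prime}=Q$: for $a>0$ a cube of fixed small sidelength with $|C(Q)|\to\infty$ gives $|Q|^{\alpha p^{\prime}/n}|C(Q)|^{ap^{\prime}}\to\infty$, and for $a\le-\alpha$ a cube near the origin gives $|Q|^{p^{\prime}(a+\alpha)/n}$, which fails to tend to $0$ as $\ell(Q)\to0$; neither step needs $\alpha<n/p$. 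You could close your gap the same way: test the single-cube term of $\mathcal{C}(Q)$ in the dyadic condition $(1.6)$, which you have already established as equivalent to $\textnormal{(i)}$ without any restriction on $\alpha$ beyond $0<\alpha<n$, rather than invoking the ball-capacity Corollary 1.2.
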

\indent Next we state the infinitesimal boundedness (1.1) in an equivalent localized form analogous to $\textnormal{(ii)}$ in Theorem B. For this characterization, we impose the condition that $w$ is a power weight and $\alpha$ is an integer.
\begin{thm}
Let $1\leq p<\infty$, $m\in\mathbb{N}$ and $m<n$ and $w(x)=|x|^{\theta}$ with $1-n<\theta\leq0$. Then for non-negative function $V\in L_{loc}^{1}(\mathbb{R}^{n})$, the weighted infinitesimal boundedness \textnormal{(i)} in Theorem 1.1 with $\nabla^{m}$ in the place of $(-\Delta)^{\alpha/2}$ is equivalent to
\begin{equation}
\lim_{\delta\rightarrow0}\sup_{x_{0}\in\mathbb{R}^{n}}\sup\left\{\|V\phi\|_{L^{p}(w)}:\ \phi\in C_{c}^{\infty}(B(x_{0},\delta)),\ \|\nabla^{m}\phi\|_{L^{p}(w)}\leq1\right\}=0.\tag{1.8}   
\end{equation}
\end{thm}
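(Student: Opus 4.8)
The plan is to prove the two implications $\mathrm{(i)}\Rightarrow(1.8)$ and $(1.8)\Rightarrow\mathrm{(i)}$ by a localization argument in the spirit of Maz'ya--Verbitsky. Throughout I use that the hypothesis $w(x)=|x|^{\theta}$ with $1-n<\theta\le0$ makes $w$ an $A_{1}$ (hence $A_{p}$) weight, which supplies two analytic facts: (a) the weighted Friedrichs--Poincar\'e inequality $\|\psi\|_{L^{p}(w)}\le Cr\,\|\nabla\psi\|_{L^{p}(w)}$ for $\psi\in C_{c}^{\infty}(B(x_{0},r))$, with $C$ independent of $x_{0}$ and $r$; and (b) the weighted interpolation inequality for intermediate derivatives: for $0\le k<m$ and every $\tau>0$,
$$\|\nabla^{k}\psi\|_{L^{p}(w)}^{p}\le\tau\,\|\nabla^{m}\psi\|_{L^{p}(w)}^{p}+C(\tau)\,\|\psi\|_{L^{p}(w)}^{p},\qquad C(\tau)\le C\tau^{-k/(m-k)},$$
both available from the theory of weighted Sobolev spaces for $A_{p}$ weights (cf. Turesson \cite{T20}). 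For $\mathrm{(i)}\Rightarrow(1.8)$: fix $\epsilon>0$ and $\phi\in C_{c}^{\infty}(B(x_{0},\delta))$ with $\|\nabla^{m}\phi\|_{L^{p}(w)}\le1$; since each $\nabla^{j}\phi$, $0\le j<m$, is still supported in $B(x_{0},\delta)$, iterating (a) $m$ times yields $\|\phi\|_{L^{p}(w)}\le C_{0}\delta^{m}$. Inserting this into $\mathrm{(i)}$ (with $\nabla^{m}$ in place of $(-\Delta)^{\alpha/2}$) gives $\|V\phi\|_{L^{p}(w)}^{p}\le\epsilon+C(\epsilon)C_{0}^{p}\delta^{mp}$; taking the supremum over all such $\phi$ and over $x_{0}$ and letting $\delta\to0$ shows the $\limsup$ in $(1.8)$ is at most $\epsilon^{1/p}$, and as $\epsilon$ is arbitrary the limit is $0$.

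For $(1.8)\Rightarrow\mathrm{(i)}$, set $\eta(\delta):=\sup_{x_{0}}\sup\{\|V\psi\|_{L^{p}(w)}:\psi\in C_{c}^{\infty}(B(x_{0},\delta)),\ \|\nabla^{m}\psi\|_{L^{p}(w)}\le1\}$, so that $(1.8)$ reads $\eta(\delta)\to0$ and, by homogeneity, $\|V\psi\|_{L^{p}(w)}\le\eta(\delta)\,\|\nabla^{m}\psi\|_{L^{p}(w)}$ whenever $\mathrm{supp}\,\psi\subseteq B(x_{0},\delta)$. Fix $\delta\in(0,1]$, cover $\mathbb{R}^{n}$ by balls $B_{j}=B(x_{j},c_{n}\delta)$ of bounded overlap $N=N(n)$ for a suitable dimensional $c_{n}$, and pick a subordinate smooth partition of unity $\{\psi_{j}\}$ with $\sum_{j}\psi_{j}\equiv1$, $\mathrm{supp}\,\psi_{j}\subseteq B_{j}$, and $|\nabla^{l}\psi_{j}|\le C_{l}\delta^{-l}$. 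For $\phi\in C_{c}^{\infty}$, bounded overlap together with $\sum_{j}\psi_{j}\equiv1$ gives $|V\phi|^{p}\le N^{p-1}\sum_{j}|V(\phi\psi_{j})|^{p}$ pointwise, so integrating against $w$ and applying the local bound to each $\phi\psi_{j}\in C_{c}^{\infty}(B_{j})$,
$$\|V\phi\|_{L^{p}(w)}^{p}\le N^{p-1}\eta(c_{n}\delta)^{p}\sum_{j}\|\nabla^{m}(\phi\psi_{j})\|_{L^{p}(w)}^{p}.$$
By Leibniz' rule and the bounds on $\psi_{j}$ one has $|\nabla^{m}(\phi\psi_{j})|\le C\sum_{k=0}^{m}\delta^{-(m-k)}|\nabla^{k}\phi|\,\mathbf{1}_{B_{j}}$; summing over $j$ and using the bounded overlap once more,
$$\|V\phi\|_{L^{p}(w)}^{p}\le C_{1}\,\eta(c_{n}\delta)^{p}\Bigl(\|\nabla^{m}\phi\|_{L^{p}(w)}^{p}+\sum_{k=0}^{m-1}\delta^{-(m-k)p}\|\nabla^{k}\phi\|_{L^{p}(w)}^{p}\Bigr).$$

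It remains to absorb the intermediate derivatives: apply (b) with $\tau=\delta^{2(m-k)p}$ to the term of index $1\le k\le m-1$, so that its $\|\nabla^{m}\phi\|_{L^{p}(w)}^{p}$ contribution becomes $\delta^{(m-k)p}\le1$ and its remainder is $\le C\delta^{-(m+k)p}\|\phi\|_{L^{p}(w)}^{p}$; together with the $k=0$ term $\delta^{-mp}\|\phi\|_{L^{p}(w)}^{p}$ this yields, for $\delta\le1$,
$$\|V\phi\|_{L^{p}(w)}^{p}\le C_{2}\,\eta(c_{n}\delta)^{p}\,\|\nabla^{m}\phi\|_{L^{p}(w)}^{p}+C_{3}\,\eta(c_{n}\delta)^{p}\,\delta^{-(2m-1)p}\,\|\phi\|_{L^{p}(w)}^{p}.$$
Given $\epsilon>0$, use $\eta\to0$ to pick $\delta\in(0,1]$ with $C_{2}\eta(c_{n}\delta)^{p}<\epsilon$ and set $C(\epsilon):=C_{3}\eta(c_{n}\delta)^{p}\delta^{-(2m-1)p}<\infty$; this is precisely $\mathrm{(i)}$. (When $m=1$ there are no intermediate derivatives to absorb, and the argument is the first-order weighted analogue of the equivalence $\mathrm{(i)}\leftrightarrow\mathrm{(ii)}$ in Theorem B.)

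The partition of unity, Leibniz' rule and the bookkeeping of powers of $\delta$ are routine. The step I expect to demand the most care is the verification of the weighted inequalities (a) and (b) for the power weights $|x|^{\theta}$, $1-n<\theta\le0$, uniformly in the full range $1\le p<\infty$; this is exactly where the restriction on $\theta$ is used, and where the proof relies on weighted Sobolev/potential theory rather than on soft localization.
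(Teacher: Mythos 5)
Your proof is correct in outline and follows the same two-step architecture as the paper: a weighted Poincar\'e estimate on small balls for the necessity direction, and a partition-of-unity plus interpolation argument for the sufficiency direction. The differences are technical rather than conceptual, but worth noting. For necessity, the paper proves a one-shot higher-order weighted Poincar\'e inequality (Lemma 4.1) using a Zhang--Niu pointwise representation and the \emph{spherical} maximal operator $M_{S}$, which is where the restriction $1-n<\theta\le0$ (i.e.\ $M_{S}w\lesssim w$) enters; you instead iterate a first-order Poincar\'e inequality $m$ times. Your route is, if anything, safer: the Minkowski computation in the paper's Lemma 4.1 implicitly needs $n-1-p(n-m)>-1$, i.e.\ $p<n/(n-m)$, for the radial integral $\int_{0}^{2\delta}\rho^{n-1-p(n-m)}\,d\rho$ to converge, whereas a pointwise bound by the truncated Riesz potential $I_{1}^{2\delta}(|\nabla\psi|)\lesssim\delta\,M(|\nabla\psi|)$ plus $A_{p}$-boundedness of $M$ (and a Fubini argument for $p=1$, $w\in A_{1}$) gives the first-order step uniformly in $p$. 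For sufficiency, the paper uses the homogeneous weighted Gagliardo--Nirenberg inequality followed by Young's inequality, while you use the equivalent $\epsilon$--$\delta$ intermediate-derivative form (b); these are the same inequality packaged differently. The one genuine flaw I see is the attribution: inequality (b), the weighted interpolation for intermediate derivatives with power weights, is \emph{not} in Turesson's book, which treats weighted Riesz/Bessel capacities and Poincar\'e--Sobolev inequalities but not Gagliardo--Nirenberg interpolation. This is exactly why the paper cites Leinfelder \cite{L86} for $p=1$ and Duarte--Silva \cite{DD23} for $p>1$; that is the reference (b) actually needs, and it is the one point in your argument that demands a specific nontrivial external result for the power-weight class rather than generic $A_{p}$ theory. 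Everything else (the partition of unity, bounded overlap, Leibniz bookkeeping, and the choice $\tau=\delta^{2(m-k)p}$) is correct.
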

\begin{rem}
We note that for $m\in\mathbb{Z}$ the norms in $(1.1)$ are equivalent to classical derivatives. To be precisely, for $f\in\mathcal{S}(\mathbb{R}^{n})$, $1<p<\infty$ and $w\in A_{p}$,
$$\|\nabla^{m}f\|_{L^{p}(w)}\sim\|(-\Delta)^{\frac{m}{2}}f\|_{L^{p}(w)}.$$
However, for the endpoint case $p=1$, Theorem 1.4 applies only to the operator $H=\nabla^{m}+V$ and not to $(-\Delta)^{m/2}+V$.\\
\indent Moreover, only the sufficiency of $(1.8)$ requires $m$ to be an integer, as certain techniques relying on differential are applied. By contrast, for the necessity part, if we impose the additional assumptions $1<p<\infty$ and $\alpha<n/p$, then for $w\in A_{p,q}$, the infinitesimal boundedness property $(1.1)$ in fact implies the local behaviors $(1.8)$ (now with the general operator $(-\Delta)^{\alpha/2}$, where $\alpha$ need not be in $\mathbb{Z}^{+}$). The details are provided in the Remarks in Section 4.
\end{rem}
\indent Theorem 1.4 extends the condition $\textnormal{(ii)}$ in Theorem B and the results by Cao, Gao, Jin and Wang \cite{CGJW25} to general cases, $1\leq p<\infty$, $m\in\mathbb{Z}$ and $w$ is a power weight. Furthermore, the proof indicates an equivalent form for Trudinger type inequalities $(1.2)$ where $C(\epsilon)$ has power growth.
\begin{thm}
With the same assumption as in Theorem 1.1 except we assume $p>1$, for $\beta>0$ the followings are equivalent:\\
\textnormal{(i)} There exists $C>0$ such that for any $\epsilon>0$ and $\phi\in C_{c}^{\infty}$, 
$$\|V\phi\|_{L^{p}(w)}^{p}\leq\epsilon\|\nabla^{m}\phi\|_{L^{p}(w)}^{p}+C\epsilon^{-\beta}\|\phi\|_{L^{p}(w)}^{p}.$$
\textnormal{(ii)} There exists $C^{'}>0$ that for any $\delta>0$, $x_{0}\in\mathbb{R}^{n}$ and $\phi\in C_{c}^{\infty}(B(x_{0},\delta))$,
$$\|V\phi\|^{p}_{L^{p}(w)}\leq C^{'}\delta^{\frac{pm}{\beta+1}}\|\nabla^{m}\phi\|_{L^{p}(w)}^{p}.$$
\end{thm}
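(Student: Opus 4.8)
The plan is a two-sided scaling argument: from the Trudinger inequality (i) one produces the scale-localized estimate (ii) by a one-variable optimization, and conversely one patches (ii) over a cover at scale $\delta$ and sets $\delta\sim\epsilon^{(\beta+1)/(pm)}$. I will use two standard weighted facts, valid because $w=|x|^{\theta}$ with $1-n<\theta\le0$ is an $A_{p}$ weight ($p>1$), $m\in\mathbb{N}$, $m<n$, and the operator is the integer-order $\nabla^{m}$: (a) the higher-order Poincar\'e inequality $\|\phi\|_{L^{p}(w)}\le C_{0}\,r^{m}\|\nabla^{m}\phi\|_{L^{p}(w)}$ for $\phi\in C_{c}^{\infty}(B)$ with $B$ a ball of radius $r$ — which follows for $A_{p}$ weights by iterating the pointwise bound $|\phi|\le c\,I_{1,2r}(|\nabla\phi|)$ on $B$ together with the truncated Riesz estimate $\|I_{1,R}g\|_{L^{p}(w)}\le CR\|g\|_{L^{p}(w)}$ — and (b) the weighted Gagliardo--Nirenberg inequality $\|\nabla^{\ell}\phi\|_{L^{p}(w)}^{p}\le C_{GN}\|\nabla^{m}\phi\|_{L^{p}(w)}^{p\ell/m}\|\phi\|_{L^{p}(w)}^{p(m-\ell)/m}$ for $0\le\ell\le m$, which one deduces from the norm equivalence $\|\nabla^{m}f\|_{L^{p}(w)}+\|f\|_{L^{p}(w)}\sim\|(1-\Delta)^{m/2}f\|_{L^{p}(w)}$ (cf.\ the Remark after Theorem 1.4), the $L^{p}(w)$-boundedness of $\nabla^{\ell}(1-\Delta)^{-m/2}$, and the exact dilation homogeneity of a power weight.

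For (i)$\Rightarrow$(ii), I would take $\phi\in C_{c}^{\infty}(B(x_{0},\delta))$, use (a) to replace $\|\phi\|_{L^{p}(w)}^{p}$ in (i) by $C_{0}^{p}\delta^{pm}\|\nabla^{m}\phi\|_{L^{p}(w)}^{p}$, and obtain
$$\|V\phi\|_{L^{p}(w)}^{p}\le\big(\epsilon+A\epsilon^{-\beta}\big)\|\nabla^{m}\phi\|_{L^{p}(w)}^{p},\qquad A:=C\,C_{0}^{p}\,\delta^{pm},$$
for every $\epsilon>0$; then I would optimize in $\epsilon$. The minimum of $\epsilon\mapsto\epsilon+A\epsilon^{-\beta}$ over $\epsilon>0$ is $c_{\beta}A^{1/(\beta+1)}$, attained at $\epsilon=(\beta A)^{1/(\beta+1)}$, which equals $c_{\beta}(C C_{0}^{p})^{1/(\beta+1)}\delta^{pm/(\beta+1)}$ — precisely (ii) with $C'=c_{\beta}(C C_{0}^{p})^{1/(\beta+1)}$.

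For (ii)$\Rightarrow$(i), I would fix $\delta>0$, take a partition of unity $\{\psi_{j}\}$ at scale $\delta$ with $\psi_{j}\in C_{c}^{\infty}(B(x_{j},\delta))$, $\sum_{j}\psi_{j}\equiv1$, $\|\nabla^{k}\psi_{j}\|_{\infty}\le c_{k}\delta^{-k}$ for $0\le k\le m$, and $\{B(x_{j},\delta)\}_{j}$ of overlap $\le N=N(n)$, and apply (ii) to each $\phi\psi_{j}$. Using the elementary bound $\|V\phi\|_{L^{p}(w)}^{p}\le N^{p-1}\sum_{j}\|V(\phi\psi_{j})\|_{L^{p}(w)}^{p}$, then the Leibniz rule for $\nabla^{m}(\phi\psi_{j})$ with the derivative bounds on $\psi_{j}$, and bounded overlap once more, I expect
$$\|V\phi\|_{L^{p}(w)}^{p}\lesssim\delta^{pm/(\beta+1)}\sum_{\ell=0}^{m}\delta^{-(m-\ell)p}\|\nabla^{\ell}\phi\|_{L^{p}(w)}^{p}.$$
Next I would insert (b), write $\delta^{-(m-\ell)p}=(\delta^{-mp})^{(m-\ell)/m}$, and apply $X^{\ell/m}Y^{(m-\ell)/m}\le X+Y$ with $X=\|\nabla^{m}\phi\|_{L^{p}(w)}^{p}$ and $Y=\delta^{-mp}\|\phi\|_{L^{p}(w)}^{p}$; since $\frac{pm}{\beta+1}-pm=-\frac{pm\beta}{\beta+1}$, summing over $\ell$ gives
$$\|V\phi\|_{L^{p}(w)}^{p}\le C_{1}\,\delta^{pm/(\beta+1)}\|\nabla^{m}\phi\|_{L^{p}(w)}^{p}+C_{2}\,\delta^{-pm\beta/(\beta+1)}\|\phi\|_{L^{p}(w)}^{p}$$
with constants $C_{1},C_{2}$ depending only on $n,p,m,\theta,\beta,C'$. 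Finally, given $\epsilon>0$, I would put $\delta=(\epsilon/C_{1})^{(\beta+1)/(pm)}$: the first coefficient becomes $\epsilon$ and the second $C_{2}C_{1}^{\beta}\epsilon^{-\beta}$, and since $\delta$ ranges over all of $(0,\infty)$ this is exactly (i) with $C=C_{2}C_{1}^{\beta}$.

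The direction (i)$\Rightarrow$(ii) is routine once the weighted Poincar\'e inequality is in hand. The main obstacle is (ii)$\Rightarrow$(i): I must localize by a partition of unity without degrading the power of $\delta$, and — the essential point — bound the intermediate derivatives $\nabla^{\ell}\phi$, $0<\ell<m$, in terms of $\nabla^{m}\phi$ and $\phi$ with exactly the $\delta$-homogeneity that scaling predicts. This is where the hypotheses ($m$ integer, $w$ a power weight in $A_{p}$, $p>1$) are genuinely used, through the weighted Gagliardo--Nirenberg inequality and the multiplier estimates underpinning it; the rest is careful bookkeeping of $\delta$-powers, so arranged that $\delta\sim\epsilon^{(\beta+1)/(pm)}$ reproduces exactly the exponent $-\beta$ on $\epsilon$.
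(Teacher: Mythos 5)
Your proposal is correct and follows essentially the same route as the paper, which simply plugs the local estimate (ii) into the displayed bounds (4.2) and (4.3) from the proof of Theorem 1.4 (weighted Poincar\'e inequality for (i)$\Rightarrow$(ii), and partition of unity, Leibniz, weighted Gagliardo--Nirenberg, Young's inequality for (ii)$\Rightarrow$(i)) and then chooses the scale $\delta\sim\epsilon^{(\beta+1)/(pm)}$. You spell out the optimization over $\epsilon$ and the bookkeeping of $\delta$-powers explicitly, but the decomposition, the key lemmas, and the final choice of parameters are exactly those of the paper.
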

\indent The preceding theorem provides certain sufficient conditions for Trudinger type inequalities, while the subsequent deduction delves into the realm of non-linear potential theory.
\begin{cor}
Let $\beta>0$, $m\in\mathbb{N}$ with $m<n$, $1< p<(\beta+1)(n+a)/m$ and denote $q$ by
$$\frac{1}{p}-\frac{1}{q}=\frac{m}{(\beta+1)(n+a)}.$$
Assume $w(x)=|x|^{a}$ with $1-n<a\leq0$ and $0\leq V\in L^{q}(w)$. Then the Trudinger type inequalities \textnormal{(i)} in Theorem 1.4 holds if,
\begin{equation*}
v(E)\lesssim R^{w}_{m,p}(E)^{\frac{q}{p}}\ \text{for}\  E\subseteq\mathbb{R}^{n},\tag{1.9}
\end{equation*}
where $dv(x):=|V|^{q}(x)w(x)dx.$
\end{cor}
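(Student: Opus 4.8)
The plan is to verify condition (ii) of Theorem 1.5 with the integer $m$ playing the role of $\alpha$, and then conclude by the equivalence established there. Concretely, the goal is to show that the hypothesis (1.9), together with $V\in L^{q}(w)$, forces
$$\|V\phi\|_{L^{p}(w)}^{p}\ \lesssim\ \delta^{\frac{pm}{\beta+1}}\,\|\nabla^{m}\phi\|_{L^{p}(w)}^{p}\qquad\text{for all }x_{0}\in\mathbb{R}^{n},\ \delta>0,\ \phi\in C_{c}^{\infty}(B(x_{0},\delta)),$$
with the implicit constant independent of $x_{0},\delta,\phi$.

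First I would upgrade (1.9) into a global weighted trace inequality. Since $m<n$, iterating the radial fundamental theorem of calculus gives the pointwise bound $|\phi(x)|\le C\,I_{m}(|\nabla^{m}\phi|)(x)$ for $\phi\in C_{c}^{\infty}$ (the $m$-fold composition of $I_{1}$ is comparable to $I_{m}$ precisely because $m<n$). Set $f:=|\nabla^{m}\phi|$. Slicing $\int(I_{m}f)^{q}\,dv$ along the level sets $E_{j}:=\{I_{m}f>2^{j}\}$, applying (1.9) to each $E_{j}$, and using $q/p\ge 1$ (so that $\ell^{1}\hookrightarrow\ell^{q/p}$), one gets
$$\int|\phi|^{q}\,dv\ \lesssim\ \sum_{j\in\mathbb{Z}}2^{jq}R^{w}_{m,p}(E_{j})^{q/p}\ =\ \sum_{j\in\mathbb{Z}}\bigl(2^{jp}R^{w}_{m,p}(E_{j})\bigr)^{q/p}\ \le\ \Bigl(\sum_{j\in\mathbb{Z}}2^{jp}R^{w}_{m,p}(E_{j})\Bigr)^{q/p}.$$
Since $w(x)=|x|^{a}$ with $1-n<a\le 0$ lies in $A_{p}$ (indeed $-n<a<n(p-1)$), the weighted capacitary strong type inequality (see Turesson \cite{T20}) bounds the last sum by $C\|f\|_{L^{p}(w)}^{p}$, hence $\|V\phi\|_{L^{q}(w)}\lesssim\|\nabla^{m}\phi\|_{L^{p}(w)}$ for all $\phi\in C_{c}^{\infty}$.

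Next I would localize. For $\phi$ supported in $B(x_{0},\delta)$, Hölder's inequality with exponents $q/p$ and $(q/p)'$ gives
$$\|V\phi\|_{L^{p}(w)}^{p}\ \le\ \|V\phi\|_{L^{q}(w)}^{p}\ w\bigl(B(x_{0},\delta)\bigr)^{1-\frac{p}{q}}.$$
A short computation for the power weight — splitting into $|x_{0}|\ge 2\delta$ and $|x_{0}|<2\delta$, and using that $|x|^{a}$ is locally integrable (here $1-n<a$, so $a>-n$) and radially nonincreasing (here $a\le 0$) — yields $w(B(x_{0},\delta))\lesssim\delta^{n+a}$ uniformly in $x_{0}$. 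Feeding in the trace bound from the previous step and using
$$(n+a)\Bigl(1-\frac{p}{q}\Bigr)\ =\ (n+a)\,p\Bigl(\frac{1}{p}-\frac{1}{q}\Bigr)\ =\ \frac{pm}{\beta+1},$$
which is exactly the defining relation for $q$, produces the displayed local estimate, i.e.\ condition (ii) of Theorem 1.5. The equivalence (i)$\leftrightarrow$(ii) in Theorem 1.5 then delivers the Trudinger type inequality.

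The step I expect to be the main obstacle is the first one: it rests on the weighted capacitary strong type inequality for the Riesz capacity $R^{w}_{m,p}$, and this is where the hypotheses are genuinely used — $w\in A_{p}$ so that the nonlinear potential theory of \cite{T20} applies, and $m<n$ so that the Riesz kernel $|x|^{m-n}$ is locally integrable (hence both the pointwise domination $|\phi|\lesssim I_{m}(|\nabla^{m}\phi|)$ and the semigroup relation $I_{1}*\cdots*I_{1}\sim I_{m}$ are available). The Hölder localization and the power-weight volume estimate in the second step are routine.
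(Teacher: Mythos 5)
Your proof is correct and follows essentially the same route as the paper: both pass from the capacity condition (1.9) to the trace inequality $\|V\phi\|_{L^q(w)}\lesssim\|\nabla^m\phi\|_{L^p(w)}$, then localize by H\"older together with the power-weight estimate $w(B(x_0,\delta))\lesssim\delta^{n+a}$, and finally invoke the equivalence in Theorem 1.5. The only difference is that where the paper cites Adams \cite{A86} as a black box for the trace inequality, you re-derive it from the pointwise bound $|\phi|\lesssim I_m(|\nabla^m\phi|)$, the level-set decomposition, the $\ell^1\hookrightarrow\ell^{q/p}$ embedding (valid since $q>p$), and the weighted capacitary strong-type inequality from Turesson \cite{T20} — which is in fact the substance of Adams' result, so this is a fleshed-out version of the same step rather than a genuinely different argument.
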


\indent This paper is organized as follows. In Section 2, we recall some basic definitions and lemmas that will be applied later. In Section 3, we prove our main result, Theorem 1.1 and its corollary Corollary 1.3. In Section 4, we provide a generalized Poincar\'{e} inequality, and then we prove Theorem 1.4,1.5 and Corollary 1.6 after that.\\
\indent We conclude the introduction with some conventions on the notation. We use $a\lesssim b$ to say that there exists a constant $C$, which is independent of the important parameters, such that $a\leq Cb$. Moreover, we write $a\sim b$ if $a\lesssim b$ and $b\lesssim a$. For any measurable set $E$, $|E|$ represents its Lebesgue measure. For a weight $w$, we set $w(E):=\int_{E}wdx$. Let $\chi_{E}$ stand for the characteristic function of $E$. In the paper all cubes are assumed to have edges parallel to the coordinate axes. For any open set $\Omega\subseteq\mathbb{R}^{n}$, $C_{c}^{\infty}(\Omega)$ denotes the space of infinitely differentiable functions with compact support on $\Omega$, and $C_{c}^{\infty}(\mathbb{R}^{n})=C_{c}^{\infty}$.
\section{Preliminary}
In this section, we recall notations and give basic estimates for Bessel potential and an operator with sparse family.
\subsection{Weight Class}
We first recall the Muckenhoupt weight \cite{M72}. For $1\leq p<\infty$ we say that $w\in A_{p}$ if
$$[w]_{A_{p}}:=\sup_{Q}\left(\frac{1}{|Q|}\int_{Q}wdx\right)\left(\frac{1}{|Q|}\int_{Q}w^{1-p^{'}}dx\right)^{p-1}<\infty,$$
where for $p=1$ we use the limiting interpretation $(\int_{Q}w^{1-p^{'}}dx/|Q|)^{p-1}=(\text{essinf}_{Q}\ w)^{-1}$. And then,
$$A_{\infty}:=\bigcup_{p\geq1}A_{p}.$$
We have althernative definition;
$$w\in A_{\infty}\leftrightarrow[w]_{A_{\infty}}:=\sup_{Q}\frac{1}{w(Q)}\int_{Q}M(w\chi_{Q})dx<\infty,$$
where $M$ denotes the Hardy-Littlewood maximal operator $Mf(x)=\sup_{Q}\int_{Q} |f|dy\cdot\chi_{Q}(x).$
This quantity $[w]_{A_{\infty}}$ is referred as the Fujii-Wilson $A_{\infty}$ constant \cite{F77,W87}. For $1\leq p,q<\infty$, following the definition by Muckenhoupt and Wheeden \cite{MW74}, we say that $w\in A_{p,q}$ if
$$[w]_{A_{p,q}}:=\sup_{Q}\left(\frac{1}{|Q|}\int_{Q}w^{q}dx\right)\left(\frac{1}{|Q|}\int_{Q}w^{-p^{'}}dx\right)^{\frac{q}{p^{\prime}}}<\infty.$$
\subsection{Bessel Functions}
\begin{defi}
Let $0<\alpha,\lambda<\infty$, the Bessel function $G_{\alpha,\lambda}$ is defined by 
$$G_{\alpha,\lambda}(x):=\mathcal{F}^{-1}\left[(2\pi)^{-\frac{n}{2}}(\lambda^{2}+|\xi|^{2})^{-\frac{\alpha}{2}}\right](x).$$
When $\lambda=1$, we simply write $G_{\alpha}(x)$. The Bessel potential $(\lambda^{2}-\Delta)^{-\alpha/2}$ is defined by 
$$(\lambda^{2}-\Delta)^{-\frac{\alpha}{2}}f(x):=G_{\alpha,\lambda}*f(x)$$
for $f\in C_{c}^{\infty}$.
\end{defi}
\indent The Bessel functions satisfy the following fundamental properties. These results can be found in standard references such as Grafakos \cite{G09}.
\begin{lem}
Let $0<\alpha,\lambda<\infty$, then\\
\textnormal{(i)} For any $x\in\mathbb{R}^{n}$, $G_{\alpha}(x)>0$ and $G_{\alpha,\lambda}(x)=\lambda^{n-\alpha}G_{\alpha}(\lambda x).$\\
\textnormal{(ii)} In the case $\alpha<n$, there exists constant $a>0$ such that 
$$G_{\alpha}(x)
\begin{cases}
\sim|x|^{\alpha-n}\quad\textnormal{if}\quad0<|x|<1,\\
\lesssim e^{-a|x|}\quad\ \textnormal{if}\quad|x|\geq1.
\end{cases}$$
\end{lem}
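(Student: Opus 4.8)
The plan is to reduce everything to the subordination (Gamma-function) representation of the Bessel kernel, from which positivity is immediate and the asymptotics follow by elementary scaling and AM--GM arguments. For $A>0$ and $s>0$ one has $A^{-s}=\Gamma(s)^{-1}\int_{0}^{\infty}t^{s-1}e^{-tA}\,dt$; inserting this with $A=\lambda^{2}+|\xi|^{2}$, $s=\alpha/2$ into the definition of $G_{\alpha,\lambda}$ and interchanging the inverse Fourier transform with the $t$-integral (justified by Fubini after pairing against a Schwartz function, since $(\lambda^{2}+|\xi|^{2})^{-\alpha/2}\in\mathcal{S}'$ and the Gaussian $e^{-t|\xi|^{2}}$ has explicit inverse Fourier transform) gives
\begin{equation*}
G_{\alpha,\lambda}(x)=\frac{c_{n}}{\Gamma(\alpha/2)}\int_{0}^{\infty}t^{\frac{\alpha-n}{2}-1}e^{-t\lambda^{2}-\frac{|x|^{2}}{4t}}\,dt
\end{equation*}
for a dimensional constant $c_{n}>0$, using that $\mathcal{F}^{-1}[(2\pi)^{-n/2}e^{-t|\xi|^{2}}]$ is a positive multiple of $t^{-n/2}e^{-|x|^{2}/(4t)}$. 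Since the integrand is strictly positive, $G_{\alpha}(x)>0$ (indeed $G_{\alpha,\lambda}(x)>0$) for all $x$. For the scaling identity in (i) I would argue directly on the Fourier side: writing $(\lambda^{2}+|\xi|^{2})^{-\alpha/2}=\lambda^{-\alpha}(1+|\xi/\lambda|^{2})^{-\alpha/2}$ and applying the dilation rule $\mathcal{F}^{-1}[g(\cdot/\lambda)](x)=\lambda^{n}(\mathcal{F}^{-1}g)(\lambda x)$, valid in $\mathcal{S}'(\mathbb{R}^{n})$, yields $G_{\alpha,\lambda}(x)=\lambda^{-\alpha}\lambda^{n}G_{\alpha}(\lambda x)=\lambda^{n-\alpha}G_{\alpha}(\lambda x)$; the substitution $t=s/\lambda^{2}$ in the integral representation gives the same conclusion.

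For part (ii) I set $r=|x|$ and work from $G_{\alpha}(x)=\frac{c_{n}}{\Gamma(\alpha/2)}\int_{0}^{\infty}t^{\frac{\alpha-n}{2}-1}e^{-t-\frac{r^{2}}{4t}}\,dt$. Near the origin, the substitution $t=r^{2}s$ produces
\begin{equation*}
G_{\alpha}(x)=\frac{c_{n}}{\Gamma(\alpha/2)}\,r^{\alpha-n}\int_{0}^{\infty}s^{\frac{\alpha-n}{2}-1}e^{-r^{2}s-\frac{1}{4s}}\,ds .
\end{equation*}
For $0<r<1$ the integrand is dominated by $s^{\frac{\alpha-n}{2}-1}e^{-1/(4s)}$, which is integrable on $(0,\infty)$: near $s=0$ the factor $e^{-1/(4s)}$ kills it, and near $s=\infty$ integrability is exactly $\frac{\alpha-n}{2}-1<-1$, i.e. the standing hypothesis $\alpha<n$. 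Hence the remaining integral is bounded above by an absolute constant and below by $\int_{0}^{\infty}s^{\frac{\alpha-n}{2}-1}e^{-s-1/(4s)}\,ds>0$ (using $r^{2}<1$), giving $G_{\alpha}(x)\sim r^{\alpha-n}$ uniformly for $0<|x|<1$.

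For $|x|\ge1$ the task is to extract honest exponential decay from $e^{-t-r^{2}/(4t)}$ even though $t^{\frac{\alpha-n}{2}-1}$ fails to be integrable at $t=0$ when $\alpha<n$. The device is the AM--GM splitting $t+\frac{r^{2}}{4t}=\bigl(t+\frac{r^{2}}{8t}\bigr)+\frac{r^{2}}{8t}\ge\frac{r}{\sqrt{2}}+\frac{r^{2}}{8t}$, whence $e^{-t-r^{2}/(4t)}\le e^{-r/\sqrt{2}}e^{-r^{2}/(8t)}$. The substitution $t=r^{2}v/8$ shows $\int_{0}^{\infty}t^{\frac{\alpha-n}{2}-1}e^{-r^{2}/(8t)}\,dt=8^{(n-\alpha)/2}r^{\alpha-n}\int_{0}^{\infty}v^{\frac{\alpha-n}{2}-1}e^{-1/v}\,dv$, and the last integral is finite precisely because $\alpha<n$ controls $v\to\infty$ while $e^{-1/v}$ controls $v\to0$. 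Therefore $G_{\alpha}(x)\lesssim r^{\alpha-n}e^{-r/\sqrt{2}}\le e^{-r/\sqrt{2}}$ for $r\ge1$, which is the asserted bound with $a=1/\sqrt{2}$. I expect this last estimate to be the only genuinely delicate point: one must choose the splitting of the exponent so as to gain a true $e^{-cr}$ while leaving a $t$-integral convergent at both endpoints under the hypothesis $\alpha<n$; positivity and the scaling identity are essentially formal consequences of the representation.
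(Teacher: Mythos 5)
Your proof is correct. The paper does not prove this lemma at all---it simply cites Grafakos---and your argument via the subordination representation $G_{\alpha,\lambda}(x)=c_{n}\Gamma(\alpha/2)^{-1}\int_{0}^{\infty}t^{\frac{\alpha-n}{2}-1}e^{-t\lambda^{2}-|x|^{2}/(4t)}\,dt$, with the scaling identity read off on the Fourier side, the substitution $t=r^{2}s$ for the local asymptotics, and the AM--GM splitting of the exponent for the exponential decay, is precisely the standard textbook proof being referenced, carried out completely and with the hypothesis $\alpha<n$ used in exactly the right places.
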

\indent Moreover, it is easy to verify that Bessel potentials are bounded on $L^{p}(w)$ with $w\in A_{p}$.\\
~\\
\begin{lem}
For $0<\alpha,\lambda<\infty$, $p>1$ and weight $w\in A_{p}$, it holds that
$$\|{(\lambda^{2}-\Delta)}^{-\frac{\alpha}{2}}\|_{L^{p}(w)\rightarrow L^{p}(w)}\lesssim\lambda^{-\alpha},$$
$$\|(-\Delta)^{\frac{\alpha}{2}}(\lambda^{2}-\Delta)^{-\frac{\alpha}{2}}\|_{L^{p}(w)\rightarrow L^{p}(w)}\lesssim1,$$
$$\|(\lambda^{2}-\Delta)^{\frac{\alpha}{2}}(\lambda^{2}+(-\Delta)^{\frac{\alpha}{2}})^{-1}\|_{L^{p}(w)\rightarrow L^{p}(w)}\lesssim1.$$
\end{lem}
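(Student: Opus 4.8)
The common thread is that, after pulling out the obvious power of $\lambda$, each of the three operators is a Fourier multiplier whose symbol is of the form $m(\xi/\lambda)$ for a fixed function $m$ satisfying the Mihlin--H\"ormander bounds $|\partial^{\gamma}m(\xi)|\lesssim|\xi|^{-|\gamma|}$ for $|\gamma|\leq\lfloor n/2\rfloor+1$ (for the third operator one first performs a mild frequency rescaling that normalises the two constituent symbols). So the plan is: identify the symbols, verify the Mihlin condition with constants independent of $\lambda$, and invoke the weighted Mihlin--H\"ormander multiplier theorem for $A_{p}$ weights (Kurtz and Wheeden), whose conclusion is that such a multiplier is bounded on $L^{p}(w)$, $1<p<\infty$, with norm controlled by $[w]_{A_{p}}$, $n$, $p$ and the implicit constants in the symbol estimates.

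For the first estimate there is a more elementary route that I would take. By Lemma 2.2(i), $(\lambda^{2}-\Delta)^{-\alpha/2}f=G_{\alpha,\lambda}*f$ with $G_{\alpha,\lambda}(x)=\lambda^{n-\alpha}G_{\alpha}(\lambda x)$, the $L^{1}$-dilation of the positive, radially non-increasing, integrable Bessel kernel $G_{\alpha}$; hence $G_{\alpha,\lambda}$ is positive and radially non-increasing with $\|G_{\alpha,\lambda}\|_{L^{1}}=\lambda^{-\alpha}\|G_{\alpha}\|_{L^{1}}\sim\lambda^{-\alpha}$. The standard pointwise bound for convolution against a radially non-increasing $L^{1}$ kernel gives $|(\lambda^{2}-\Delta)^{-\alpha/2}f(x)|\leq\|G_{\alpha,\lambda}\|_{L^{1}}\,Mf(x)\lesssim\lambda^{-\alpha}Mf(x)$, and since $w\in A_{p}$ the Hardy--Littlewood maximal operator $M$ is bounded on $L^{p}(w)$, which finishes this line.

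For the second and third estimates I would pass to the frequency side. The operator in the second line is the Fourier multiplier with symbol $m^{(2)}_{\lambda}(\xi)=\bigl(|\xi|^{2}/(\lambda^{2}+|\xi|^{2})\bigr)^{\alpha/2}=m^{(2)}_{1}(\xi/\lambda)$, and the one in the third line is, after writing it out as a Fourier multiplier and performing the rescaling $\xi\mapsto\lambda\xi$ that normalises its two constituent symbols, a bounded $\lambda$-independent multiple of the Fourier multiplier with the $\lambda$-free symbol $m^{(3)}(\xi)=(1+|\xi|^{2})^{\alpha/2}(1+|\xi|^{\alpha})^{-1}$. The heart of the matter is the Mihlin bounds for $m^{(2)}_{1}$ and $m^{(3)}$: away from the origin both are $C^{\infty}$, being smooth bounded functions of $|\xi|^{-2}$ and $|\xi|^{-\alpha}$, so all their derivatives are $\lesssim|\xi|^{-|\gamma|}$ on $|\xi|\geq1$; near the origin one writes $m^{(2)}_{1}=|\xi|^{\alpha}(1+|\xi|^{2})^{-\alpha/2}$ and $m^{(3)}=(1+|\xi|^{2})^{\alpha/2}(1+|\xi|^{\alpha})^{-1}$ and applies the Leibniz rule with the homogeneity estimate $|\partial^{\gamma}|\xi|^{\alpha}|\lesssim|\xi|^{\alpha-|\gamma|}$ and the boundedness of the smooth factors, obtaining $|\partial^{\gamma}m^{(2)}_{1}(\xi)|,\,|\partial^{\gamma}m^{(3)}(\xi)|\lesssim|\xi|^{\alpha-|\gamma|}+1\lesssim|\xi|^{-|\gamma|}$ on $|\xi|\leq1$, where $\alpha>0$ is exactly the point. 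Finally $m^{(2)}_{\lambda}(\xi)=m^{(2)}_{1}(\xi/\lambda)$ gives $|\partial^{\gamma}m^{(2)}_{\lambda}(\xi)|=\lambda^{-|\gamma|}\bigl|(\partial^{\gamma}m^{(2)}_{1})(\xi/\lambda)\bigr|\lesssim|\xi|^{-|\gamma|}$ with constant independent of $\lambda$, and likewise for the third symbol after rescaling; the weighted multiplier theorem then delivers both bounds with constants depending only on $[w]_{A_{p}}$, $n$, $p$, $\alpha$, hence uniformly in $\lambda$.

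The main obstacle is precisely this verification of the Mihlin condition near $\xi=0$: the symbols fail to be smooth at the origin (they behave like $|\xi|^{\alpha}$, resp.\ like $1+O(|\xi|^{\min(2,\alpha)})$), so one has to check by hand that each differentiation loses no more than the expected power of $|\xi|$, and it is here that the hypothesis $\alpha>0$ enters; one must also keep track that the multiplier-theorem constant depends on the weight only through $[w]_{A_{p}}$, which is what yields uniformity in $\lambda$. If one prefers not to use the weighted multiplier theorem as a black box, an equivalent route is to check directly that the convolution kernel $\mathcal{F}^{-1}m^{(2)}_{\lambda}$ (smooth away from the origin) satisfies H\"ormander's regularity condition with constants uniform in $\lambda$, combine this with the trivial bound $\|m^{(2)}_{\lambda}\|_{L^{\infty}}\lesssim1$, and invoke weighted Calder\'on--Zygmund theory for $A_{p}$ weights; this makes the $\lambda$-uniformity equally transparent.
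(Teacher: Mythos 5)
Your argument for the first inequality is a clean, more elementary alternative to the paper's: you bound $G_{\alpha,\lambda}*f$ pointwise by $\|G_{\alpha,\lambda}\|_{L^1}Mf\lesssim\lambda^{-\alpha}Mf$ using that $G_{\alpha,\lambda}$ is positive and radially decreasing, then invoke $M:L^p(w)\to L^p(w)$ for $w\in A_p$; the paper instead applies the weighted Mihlin theorem (Kurtz) once at $\lambda=1$ and then rescales both $f\mapsto f_\lambda$ and $w\mapsto w(\cdot/\lambda)$, using $[w(\lambda\cdot)]_{A_p}=[w]_{A_p}$. For the second inequality your route — verify the Mihlin bounds for $m^{(2)}_1$ once and use the scale-invariance $|\partial^\gamma m(\xi/\lambda)|=\lambda^{-|\gamma|}|(\partial^\gamma m)(\xi/\lambda)|\lesssim|\xi|^{-|\gamma|}$ — is an equivalent repackaging of the paper's function-and-weight rescaling; both are fine, and your careful check of the Mihlin condition near $\xi=0$ (where $\alpha>0$ is what saves the day) is exactly the content the paper leaves as ``the same way.''

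The third line, however, contains a genuine gap. The symbol is $m^{(3)}_\lambda(\xi)=(\lambda^2+|\xi|^2)^{\alpha/2}(\lambda^2+|\xi|^\alpha)^{-1}$, and the substitution $\xi\mapsto\lambda\xi$ gives $\lambda^\alpha(1+|\xi|^2)^{\alpha/2}\bigl(\lambda^2+\lambda^\alpha|\xi|^\alpha\bigr)^{-1}=(1+|\xi|^2)^{\alpha/2}\bigl(\lambda^{2-\alpha}+|\xi|^\alpha\bigr)^{-1}$, which is \emph{not} a $\lambda$-independent multiple of $(1+|\xi|^2)^{\alpha/2}(1+|\xi|^\alpha)^{-1}$ unless $\alpha=2$: the two occurrences of $\lambda^2$ sit in operators of different order ($|\xi|^2$ versus $|\xi|^\alpha$), so a single dilation cannot normalise both. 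Your parenthetical ``mild frequency rescaling that normalises the two constituent symbols'' is therefore wrong as stated. Indeed $m^{(3)}_\lambda(0)=\lambda^{\alpha-2}$, so the claimed $\lambda$-uniform bound already fails at the level of $\|m^{(3)}_\lambda\|_{L^\infty}$ when $\alpha\neq2$. The version that \emph{does} rescale to a $\lambda$-free symbol, and that is what the subsequent application in Lemma 3.1 actually needs, replaces $\lambda^2+(-\Delta)^{\alpha/2}$ by $\lambda^\alpha+(-\Delta)^{\alpha/2}$, matching the homogeneity of the two operators. The paper hides this by writing only ``the others can be verified in the same way,'' so the subtlety is upstream of your proof as well; but you should flag it and prove the corrected statement rather than assert a rescaling that does not hold.
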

\begin{proof}
We provide the proof for the first inequality only, because the others can be verified in the same way. By the weighted Mikhlin multiplier theorem, for example in Kurtz \cite{K80}, we obtain that for any $v\in A_{p}$, 
$$\|{(I-\Delta)}^{-\alpha/2}\|_{L^{p}(v)\rightarrow L^{p}(v)}\leq C,$$ where the constant $C$ only depends on $n,p$ and $[v]_{A_{p}}$. Furthermore, if $f_{\lambda}(x):=(1/\lambda) f(x/\lambda)$ for $\lambda>0$, we have
\begin{align*}
    (\lambda^{2}-\Delta)^{-\frac{\alpha}{2}}f(x)&=\lambda^{-\alpha}\mathcal{F}^{-1}\left[(2\pi)^{-\frac{\alpha}{2}}\left(1+\left|\frac{\xi}{\lambda}\right|^{2}\right)^{-\frac{\alpha}{2}}\widehat{f}(\xi)\right](x)\\
    &=\lambda^{-\alpha}\mathcal{F}^{-1}\left[\left[(2\pi)^{-\frac{\alpha}{2}}(1+|\cdot|^{2})^{-\frac{\alpha}{2}}\widehat{f_{\lambda}}(\cdot)\right](\frac{\xi}{\lambda})\right](x)\\
    &=\lambda^{n-\alpha}\left((I-\Delta)^{-\frac{\alpha}{2}}f_{\lambda}\right)(\lambda x).
\end{align*}
Therefore,
\begin{align*}
    \|(\lambda^{2}-\Delta)^{-\alpha/2}f\|_{L^{p}(w)}&=\lambda^{n-\alpha}\lambda^{-n/p}\|(I-\Delta)^{-\alpha/2}f_{\lambda}\|_{L^{p}\left(w(\cdot/\lambda)\right)}\\
    &\lesssim\lambda^{n-\alpha}\lambda^{-n/p}\|f_{\lambda}\|_{L^{p}\left(w(\cdot/\lambda)\right)}=\lambda^{-\alpha}\|f\|_{L^{p}(w)}.
\end{align*}
Here we used the fact that for $w\in A_{p}$ and $\lambda>0$, it holds that $w(\lambda\cdot)\in A_{p}$ and $[w(\lambda\cdot)]_{A_{p}}=[w]_{A_{p}}$.
\end{proof}
\subsection{Sparse Families}
\indent For $t\in\left\{0,1,2\right\}^{n}$, we denote the shifted dyadic cubes in $\mathbb{R}^{n}$ by 
$$\mathcal{D}^{t}:=\left\{2^{-k}\left([0,1)^{n}+m+(-1)^{k}\frac{t}{3}\right);\ k\in\mathbb{Z},m\in\mathbb{Z}^{n}\right\},\ \text{and}\ \mathcal{D}:=\bigcup_{t}\mathcal{D}^{t}.$$
For $Q_{0}\in\mathcal{D}^{t}$, we denote by $\mathcal{D}^{t}(Q_{0})$ the collection of all dyadic cubes $Q\in\mathcal{D}^{t}$ that satisfy $Q\subseteq Q_{0}$.
\begin{defi}
For $t\in\{0,1,2\}^{n}$, a collection of cubes $\mathcal{S}\subseteq\mathcal{D}^{t}$ is said to be a sparse family, if there is a pairwise disjoint collection $(E_Q)_{Q\in S}$, so that $E_Q\subseteq Q$ and $|E_Q|\geq|Q|/2$.
\end{defi}
\indent We will use the following two lemmas later.
\begin{lem}
\textnormal{(Fackler and Hyt\"{o}nen, \cite{FH17})} Let $\mu$ be a non-negative locally finite Borel measure, and we suppose $\alpha_{1}>0$ and $\alpha_{2}\geq0$ satisfy $\alpha_{1}+\alpha_{2}\geq1$. Then for any sparse family $\mathcal{S}$ and cube $Q\in\mathcal{S}$,
$$\sum\limits_{\substack{Q^{\prime}\in\mathcal{S}\\Q^{\prime}\subseteq Q}}|Q^{\prime}|^{\alpha_{1}}\mu(Q^{\prime})^{\alpha_{2}}\leq C|Q|^{\alpha_{1}}\mu(Q)^{\alpha_{2}}.$$
\end{lem}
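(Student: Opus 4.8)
The plan is to argue by induction on the size of the sparse family, exploiting the stopping-time structure of sparse cubes: the point is that only a bounded number of maximal \emph{sub}cubes of $Q$ in $\mathcal{S}$ carry most of the geometric mass, and the disjoint sets $E_{Q'}$ let us sum up $|Q'|$ geometrically. Concretely, fix $Q\in\mathcal{S}$ and let $\{Q_j\}$ be the maximal elements of $\{Q'\in\mathcal{S}:Q'\subsetneq Q\}$; these are pairwise disjoint and contained in $Q$, so $\sum_j|Q_j|\le|Q|$. Since $\alpha_1+\alpha_2\ge1$ and $\mu(Q_j)\le\mu(Q)$, monotonicity gives $|Q_j|^{\alpha_1}\mu(Q_j)^{\alpha_2}\le|Q_j|^{\alpha_1}\mu(Q)^{\alpha_2}$; but that alone only yields the crude bound with an extra factor, so one must be more careful.

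The sharper route is to use the sparseness directly. Because $(E_{Q'})_{Q'\in\mathcal{S}}$ is pairwise disjoint with $|E_{Q'}|\ge|Q'|/2$, for any $Q'\subseteq Q$ in $\mathcal{S}$ we have $|Q'|\le 2|E_{Q'}|$, and the sets $E_{Q'}$ ($Q'\subseteq Q$) all lie in $Q$. First consider the case $\alpha_2=0$, so $\alpha_1\ge1$: then
\begin{align*}
\sum_{\substack{Q'\in\mathcal{S}\\Q'\subseteq Q}}|Q'|^{\alpha_1}
\le 2^{\alpha_1}\sum_{\substack{Q'\in\mathcal{S}\\Q'\subseteq Q}}|Q'|^{\alpha_1-1}|E_{Q'}|
\le 2^{\alpha_1}|Q|^{\alpha_1-1}\sum_{\substack{Q'\in\mathcal{S}\\Q'\subseteq Q}}|E_{Q'}|
\le 2^{\alpha_1}|Q|^{\alpha_1},
\end{align*}
using $|Q'|\le|Q|$ and disjointness. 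For the general case $\alpha_2>0$, I would split the sum according to the dyadic scale of $\mu(Q')$ relative to $\mu(Q)$, say $2^{-k-1}\mu(Q)<\mu(Q')\le 2^{-k}\mu(Q)$ for $k\ge0$, and within each layer let $\{Q'_{k,i}\}_i$ be the maximal cubes of $\mathcal{S}$ in that layer sitting under $Q$. A subcube $Q'$ in layer $k$ is contained in some $Q'_{k,i}$, so grouping by $i$ and applying the $\alpha_2=0$ estimate \emph{inside each} $Q'_{k,i}$ (which is legitimate since $\alpha_1\ge1$ is \emph{not} assumed — here I instead bound $\mu(Q')^{\alpha_2}\le(2^{-k}\mu(Q))^{\alpha_2}$ and sum $|Q'|^{\alpha_1}$ over a sparse subfamily, applying the just-proved case together with a further geometric-decay argument in $k$). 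The disjointness of the $Q'_{k,i}$ for fixed $k$ gives $\sum_i|Q'_{k,i}|\le|Q|$, and then $\sum_i|Q'_{k,i}|^{\alpha_1}\le|Q|^{\alpha_1}$ when $\alpha_1\ge1$ or $\le|Q|^{\alpha_1}(\#\{i\})^{1-\alpha_1}$ when $\alpha_1<1$; in the latter regime one uses $\mu(Q'_{k,i})>2^{-k-1}\mu(Q)$ together with $\sum_i\mu(Q'_{k,i})\le\mu(Q)$ to bound the number of such cubes by $2^{k+1}$, producing a convergent geometric series in $k$ precisely because $\alpha_1+\alpha_2\ge1$ forces the exponent $-k\alpha_2+(k+1)(1-\alpha_1)$ to have negative leading coefficient.

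The main obstacle is the bookkeeping in the regime $\alpha_1<1$: one cannot simply pull $|Q'|^{\alpha_1}$ outside, and the naive bound loses a factor equal to the number of cubes at a given $\mu$-scale, which is unbounded. The condition $\alpha_1+\alpha_2\ge1$ is exactly what is needed to defeat this: it makes the two competing counting bounds — cardinality controlled by $\mu$-mass (costing $2^k$) versus the geometric decay $2^{-k\alpha_2}$ from the mass factor — balance into a summable series. I would therefore organize the estimate so that the only place $\alpha_1+\alpha_2\ge1$ enters is this final summation over $k$, and cite this as the crux of the argument; everything else is disjointness of $(E_{Q'})$ plus monotonicity of $\mu$. (Alternatively, since this is quoted as a known result of Fackler–Hyt\"onen, one may simply invoke \cite{FH17}; but the above sketch reconstructs it.)
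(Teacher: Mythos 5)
The paper itself does not prove this lemma; it simply cites Fackler--Hyt\"onen \cite{FH17}. Your sketch is a genuine attempt at a reconstruction, and the $\alpha_{2}=0$ case and the reduction to maximal cubes per $\mu$-layer are fine, but there is a real gap at the boundary of the hypothesis. You layer by the \emph{mass} $\mu(Q')\in(2^{-k-1}\mu(Q),\,2^{-k}\mu(Q)]$ and estimate layer $k$ by $(2^{-k}\mu(Q))^{\alpha_{2}}\cdot(2^{k+1})^{1-\alpha_{1}}|Q|^{\alpha_{1}}$ when $\alpha_{1}<1$. The resulting exponent of $2$ in $k$ is $k(1-\alpha_{1}-\alpha_{2})+(1-\alpha_{1})$, whose leading coefficient $1-\alpha_{1}-\alpha_{2}$ is only \emph{nonpositive} under $\alpha_{1}+\alpha_{2}\geq1$, not strictly negative. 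At the permitted boundary value $\alpha_{1}+\alpha_{2}=1$ with $\alpha_{1}<1$ (e.g.\ $\alpha_{1}=\alpha_{2}=1/2$) the terms are constant in $k$ and the series diverges, so the argument does not close. There is also a secondary gap you wave at but do not resolve: inside a fixed maximal cube $Q'_{k,i}$ the layer $\mathcal{S}_{k}$ can contain infinitely many nested cubes, and when $\alpha_{1}<1$ you cannot invoke the "$\alpha_{2}=0$ estimate'' to sum $|Q'|^{\alpha_{1}}$ over them. (That gap is repairable: two disjoint cubes of layer $k$ inside $Q'_{k,i}$ would carry total mass $>2\cdot 2^{-k-1}\mu(Q)\ge\mu(Q'_{k,i})$, so the layer-$k$ cubes under $Q'_{k,i}$ form a dyadic chain and $\sum|Q'|^{\alpha_{1}}$ is a geometric series; but this does not fix the main divergence.)

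The fix is to stop on the \emph{density} $\langle\mu\rangle_{R}:=\mu(R)/|R|$ rather than on the mass. First reduce to $\alpha_{1}+\alpha_{2}=1$ by extracting $|Q'|^{\alpha_{1}-\beta_{1}}\mu(Q')^{\alpha_{2}-\beta_{2}}\le|Q|^{\alpha_{1}-\beta_{1}}\mu(Q)^{\alpha_{2}-\beta_{2}}$ with $\beta_{1}=\min(\alpha_{1},1)>0$, $\beta_{2}=1-\beta_{1}$. Now let $T$ be the stopping cubes: $Q\in T$, and the $T$-children of $R\in T$ are the maximal $R'\in\mathcal{S}$, $R'\subsetneq R$, with $\langle\mu\rangle_{R'}>2\langle\mu\rangle_{R}$. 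If $\pi(Q')\in T$ denotes the minimal stopping ancestor of $Q'$, then $\langle\mu\rangle_{Q'}\le2\langle\mu\rangle_{\pi(Q')}$, so
\[
\sum_{\pi(Q')=R}|Q'|^{\alpha_{1}}\mu(Q')^{\alpha_{2}}
=\sum_{\pi(Q')=R}|Q'|\,\langle\mu\rangle_{Q'}^{\alpha_{2}}
\le\bigl(2\langle\mu\rangle_{R}\bigr)^{\alpha_{2}}\sum_{\substack{Q'\in\mathcal{S}\\Q'\subseteq R}}|Q'|
\le2^{1+\alpha_{2}}|R|^{\alpha_{1}}\mu(R)^{\alpha_{2}},
\]
using the Carleson bound $\sum_{Q'\subseteq R}|Q'|\le2|R|$ from sparseness. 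For the sum over $T$: at generation $j$ one has $\langle\mu\rangle_{R}>2^{j}\langle\mu\rangle_{Q}$, hence $|R|^{\alpha_{1}}<\mu(R)^{\alpha_{1}}\bigl(2^{j}\langle\mu\rangle_{Q}\bigr)^{-\alpha_{1}}$ and, since the generation-$j$ cubes are pairwise disjoint, $\sum_{R\in T_{j}}|R|^{\alpha_{1}}\mu(R)^{\alpha_{2}}<\bigl(2^{j}\langle\mu\rangle_{Q}\bigr)^{-\alpha_{1}}\sum_{T_{j}}\mu(R)\le2^{-j\alpha_{1}}|Q|^{\alpha_{1}}\mu(Q)^{\alpha_{2}}$. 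The geometric factor is now $2^{-j\alpha_{1}}$, which is summable as soon as $\alpha_{1}>0$, with no constraint coming from $\alpha_{2}$ — this is exactly why the condition is $\alpha_{1}>0$, $\alpha_{2}\ge0$, $\alpha_{1}+\alpha_{2}\ge1$, including the boundary. So the crux is not a balance between cardinality and mass decay, but the Carleson property applied after a \emph{density} stopping time.
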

\begin{lem}
Let $1<p<\infty$ and $\mathcal{S}$ be a sparse collection. Then for any $\{\lambda_{Q}\}_{Q\in\mathcal{S}}\subseteq[0,\infty)$ and $\sigma\in A_{\infty}$, it holds
$$\|\sum_{Q\in\mathcal{S}}\lambda_{Q}\chi_{Q}\|_{L^{p}(\sigma)}^{p}\lesssim\sum_{Q\in\mathcal{S}}\lambda_{Q}^{p}\sigma(Q).$$
\end{lem}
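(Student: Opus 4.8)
The plan is to argue by duality and reduce the estimate to the $L^{p'}(\sigma)$-boundedness of a weighted dyadic maximal operator. Put $F:=\sum_{Q\in\mathcal{S}}\lambda_{Q}\chi_{Q}$. We may assume $\sum_{Q}\lambda_{Q}^{p}\sigma(Q)<\infty$, since otherwise there is nothing to prove. As $F\geq0$, Tonelli's theorem justifies all the interchanges of sum and integral below, and $L^{p}(\sigma)$--$L^{p'}(\sigma)$ duality gives
$$\|F\|_{L^{p}(\sigma)}=\sup\Big\{\sum_{Q\in\mathcal{S}}\lambda_{Q}\int_{Q}g\,\sigma\,dx:\ 0\leq g,\ \|g\|_{L^{p'}(\sigma)}\leq1\Big\}=\sup\Big\{\sum_{Q\in\mathcal{S}}\lambda_{Q}\,\sigma(Q)\,\langle g\rangle_{Q}^{\sigma}:\ 0\leq g,\ \|g\|_{L^{p'}(\sigma)}\leq1\Big\},$$
where $\langle g\rangle_{Q}^{\sigma}:=\sigma(Q)^{-1}\int_{Q}g\,\sigma\,dx$ (note $0<\sigma(Q)<\infty$ since $\sigma\in A_{\infty}$).

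Next I would invoke the sparseness together with the $A_{\infty}$ hypothesis. Fix a shifted dyadic grid $\mathcal{D}^{t}$ with $\mathcal{S}\subseteq\mathcal{D}^{t}$, and let $(E_{Q})_{Q\in\mathcal{S}}$ be the associated pairwise disjoint sets, $E_{Q}\subseteq Q$, $|E_{Q}|\geq|Q|/2$. The standard quantitative $A_{\infty}$ comparison---a set occupying at least a fixed fraction of a cube in Lebesgue measure also carries at least a fixed fraction of its $\sigma$-measure---yields $c=c(n,[\sigma]_{A_{\infty}})>0$ with $\sigma(E_{Q})\geq c\,\sigma(Q)$ for every $Q\in\mathcal{S}$. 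Writing $M_{\sigma}^{\mathcal{D}^{t}}$ for the dyadic maximal operator relative to the measure $\sigma\,dx$, we have $\langle g\rangle_{Q}^{\sigma}\leq M_{\sigma}^{\mathcal{D}^{t}}g(x)$ for every $x\in Q\supseteq E_{Q}$, whence
$$\sum_{Q\in\mathcal{S}}\lambda_{Q}\,\sigma(Q)\,\langle g\rangle_{Q}^{\sigma}\leq c^{-1}\sum_{Q\in\mathcal{S}}\lambda_{Q}\int_{E_{Q}}M_{\sigma}^{\mathcal{D}^{t}}g\,\sigma\,dx=c^{-1}\int_{\mathbb{R}^{n}}\Big(\sum_{Q\in\mathcal{S}}\lambda_{Q}\chi_{E_{Q}}\Big)M_{\sigma}^{\mathcal{D}^{t}}g\,\sigma\,dx.$$

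Finally I would close with H\"{o}lder's inequality, the disjointness of the $E_{Q}$, and the boundedness of $M_{\sigma}^{\mathcal{D}^{t}}$ on $L^{p'}(\sigma)$ with a bound depending only on $p$---valid for an arbitrary locally finite measure, via the dyadic weak-$(1,1)$ estimate and Marcinkiewicz interpolation, with no Muckenhoupt condition needed:
$$\int_{\mathbb{R}^{n}}\Big(\sum_{Q}\lambda_{Q}\chi_{E_{Q}}\Big)M_{\sigma}^{\mathcal{D}^{t}}g\,\sigma\,dx\leq\Big\|\sum_{Q}\lambda_{Q}\chi_{E_{Q}}\Big\|_{L^{p}(\sigma)}\big\|M_{\sigma}^{\mathcal{D}^{t}}g\big\|_{L^{p'}(\sigma)}\lesssim\Big(\sum_{Q}\lambda_{Q}^{p}\,\sigma(E_{Q})\Big)^{1/p}\leq\Big(\sum_{Q}\lambda_{Q}^{p}\,\sigma(Q)\Big)^{1/p}.$$
Taking the supremum over admissible $g$ and raising to the $p$-th power yields the claim. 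The only non-routine ingredient---and the sole place where $\sigma\in A_{\infty}$ is used, which also accounts for the dependence of the implicit constant on $[\sigma]_{A_{\infty}}$---is the comparison $\sigma(E_{Q})\gtrsim\sigma(Q)$; everything else is duality plus the dimension- and measure-free maximal inequality for $M_{\sigma}^{\mathcal{D}^{t}}$. (Equivalently, one could recast the argument as a Carleson embedding theorem for the sequence $(\sigma(Q))_{Q\in\mathcal{S}}$ with respect to $\sigma$, but it rests on the same two facts.)
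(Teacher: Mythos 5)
Your proof is correct and uses essentially the same ingredients, in essentially the same way, as the paper's: duality, the quantitative $A_\infty$ comparison $\sigma(E_Q)\gtrsim\sigma(Q)$ for the pairwise disjoint sets furnished by sparseness, and the universal (measure-free) $L^{p'}(\sigma)$-boundedness of the $\sigma$-weighted dyadic maximal operator. The only difference is organizational: you transfer $\sigma(Q)\langle g\rangle_Q^{\sigma}$ onto $\int_{E_Q}M_\sigma g\,d\sigma$ first and then apply H\"{o}lder in $L^{p}(\sigma)\times L^{p'}(\sigma)$ together with disjointness of the $E_Q$, whereas the paper applies discrete H\"{o}lder over the index $Q$ first and then uses the $\inf_{E_Q}M^{\sigma}g$ trick; both are the standard Carleson-embedding argument and yield the same constant dependence.
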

\begin{proof}
For any $g$ with $\|g\|_{L^{p^{\prime}}(\sigma)}=1$, one has
\begin{align*}
    \int_{\mathbb{R}^{n}}\left(\sum_{Q\in\mathcal{S}}\lambda_{Q}\chi_{Q}\right)gd\sigma&=\sum_{Q\in\mathcal{S}}\lambda_{Q}\sigma(Q)^{\frac{1}{p}}\frac{\int_{Q}gd\sigma}{\sigma(Q)}\sigma(Q)^{\frac{1}{p^{\prime}}}\\
    &\leq\left(\sum_{Q\in\mathcal{S}}\lambda_{Q}^{p}\sigma(Q)\right)^{\frac{1}{p}}\left(\sum_{Q\in\mathcal{S}}(\langle g\rangle_{Q}^{\sigma})^{p^{\prime}}\sigma(Q)\right)^{\frac{1}{p^{\prime}}},\tag{2.1}
\end{align*}
where $\langle g\rangle_{Q}^{\sigma}:=\int_{Q}gd\sigma/\sigma(Q)$. For the maximal operator $M^{\sigma}f(x):=\sup_{Q}\langle f\rangle_{Q}^{\sigma}\chi_{Q}(x)$, it is well known that $\|M^{\sigma}\|_{L^{p}(\sigma)\rightarrow L^{p}(\sigma)}<\infty$ for $1<p\leq\infty$ and any weight $\sigma$. According to the definition of sparse family, there exist disjoint sets $\{E_{Q}\}_{Q\in\mathcal{S}}$ satisfying $E_{Q}\subseteq Q$ and $|E_{Q}|\geq|Q|/2$. We remark that $\sigma(Q)\lesssim\sigma(E_{Q})$. Hence
\begin{align*}
    \int_{\mathbb{R}^{n}}\left(\sum_{Q\in\mathcal{S}}\lambda_{Q}\chi_{Q}\right)gd\sigma&\lesssim\left(\sum_{Q\in\mathcal{S}}\lambda_{Q}^{p}\sigma(Q)\right)^{\frac{1}{p}}\left(\sum_{Q\in\mathcal{S}}\inf_{E_{Q}}(M^{\sigma}g)^{p^{\prime}}\sigma(E_{Q})\right)^{\frac{1}{p^{\prime}}}\\  &\leq\left(\sum_{Q\in\mathcal{S}}\lambda_{Q}^{p}\sigma(Q)\right)^{\frac{1}{p}}\|M^{\sigma}g\|_{L^{p^{\prime}}(\sigma)}\\&\lesssim\left(\sum_{Q\in\mathcal{S}}\lambda_{Q}^{p}\sigma(Q)\right)^{\frac{1}{p}}\|g\|_{L^{p^{\prime}}(\sigma)}=\left(\sum_{Q\in\mathcal{S}}\lambda_{Q}^{p}\sigma(Q)\right)^{\frac{1}{p}},
\end{align*}
which completes the proof.
\end{proof}
\section{Proof of Theorem 1.1 and Corollary 1.3}
Here, we give a proof of Theorem 1.1 and corollary 1.3.
\subsection{Equivalence between $\textnormal{(i)}$ and $\textnormal{(ii)}$ in Theorem 1.1}
\indent We begin by establishing the connection between the infinitesimal boundedness of the Schr\"{o}dinger operator (1.1) and the asymptotic behavior of the Bessel potential (Lemma 3.1). The implication $\textnormal{(ii)}\rightarrow\textnormal{(i)}$ in Theorem 1.1 is then obtained through weighted estimates relating the Bessel potential to the local fractional maximal operator $M_{\alpha,\lambda}$ (Lemma 3.3). The opposite implication $\textnormal{(i)}\rightarrow\textnormal{(ii)}$ can be verified by testing a characteristic function directly.
\begin{lem}
Let $1<p<\infty$, $0<\alpha<n$, then for $w\in A_{p}$ and $0\leq V\in L^{p}_{loc}(w)$. Then the following are equivalent:\\
\textnormal{(i)} For any $\epsilon>0$, there exists $C(\epsilon)>0$ such that for any $\phi\in C_{c}^{\infty}$, 
$$\|V\phi\|_{L^{p}(w)}^{p}\leq\epsilon\|(-\Delta)^{\frac{\alpha}{2}}\phi\|_{L^{p}(w)}^{p}+C(\epsilon)\|\phi\|_{L^{p}(w)}^{p}.$$
\textnormal{(ii)} It holds $\lim_{\lambda\rightarrow\infty}\|V(\lambda^{2}-\Delta)^{-\alpha/2}\|_{L^{p}(w)\rightarrow L^{p}(w)}=0$.
\end{lem}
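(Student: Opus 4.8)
The plan is to prove the equivalence \textnormal{(i)}$\leftrightarrow$\textnormal{(ii)} by reformulating the infinitesimal boundedness as a statement about the resolvent operator $(\lambda^2-\Delta)^{-\alpha/2}$. The key observation is that, for $\phi\in C_c^\infty$, if we set $\psi=(\lambda^2-\Delta)^{\alpha/2}\phi$ (so that $\phi=(\lambda^2-\Delta)^{-\alpha/2}\psi$), then $\|V\phi\|_{L^p(w)}=\|V(\lambda^2-\Delta)^{-\alpha/2}\psi\|_{L^p(w)}$, and the two terms on the right-hand side of \textnormal{(i)} become $\|(-\Delta)^{\alpha/2}(\lambda^2-\Delta)^{-\alpha/2}\psi\|_{L^p(w)}^p$ and $\|(\lambda^2-\Delta)^{-\alpha/2}\psi\|_{L^p(w)}^p$. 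By Lemma 2.4, the first is $\lesssim\|\psi\|_{L^p(w)}^p$ and the second is $\lesssim\lambda^{-\alpha p}\|\psi\|_{L^p(w)}^p$. So roughly, \textnormal{(i)} with a given $\epsilon$ should correspond to the operator norm $\|V(\lambda^2-\Delta)^{-\alpha/2}\|_{L^p(w)\to L^p(w)}$ being small once $\lambda$ is large.

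For the direction \textnormal{(ii)}$\rightarrow$\textnormal{(i)}: given $\epsilon>0$, choose $\lambda$ large so that $\|V(\lambda^2-\Delta)^{-\alpha/2}\|_{L^p(w)\to L^p(w)}^p$ is as small as we like. For $\phi\in C_c^\infty$, write $\phi=(\lambda^2-\Delta)^{-\alpha/2}\psi$ with $\psi=(\lambda^2-\Delta)^{\alpha/2}\phi$; one must check $\psi$ is an admissible input, but since $\phi\in C_c^\infty\subseteq\mathcal{S}$, $\psi\in\mathcal{S}$ and lies in every $L^p(w)$ with $w\in A_p$, so the manipulations are legitimate by density. Then $\|V\phi\|_{L^p(w)}^p\leq\eta^p\|\psi\|_{L^p(w)}^p$ where $\eta$ is the small operator norm. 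Now I need to bound $\|\psi\|_{L^p(w)}$ by $\|(-\Delta)^{\alpha/2}\phi\|_{L^p(w)}$ plus lower-order terms: since $\psi=(\lambda^2-\Delta)^{\alpha/2}\phi$, write $(\lambda^2-\Delta)^{\alpha/2}=(\lambda^2-\Delta)^{\alpha/2}(\lambda^2+(-\Delta)^{\alpha/2})^{-1}(\lambda^2+(-\Delta)^{\alpha/2})$, and by the third estimate of Lemma 2.4 the first factor is bounded on $L^p(w)$, giving $\|\psi\|_{L^p(w)}\lesssim\lambda^2\|\phi\|_{L^p(w)}+\|(-\Delta)^{\alpha/2}\phi\|_{L^p(w)}$. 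Hence $\|V\phi\|_{L^p(w)}^p\lesssim\eta^p\|(-\Delta)^{\alpha/2}\phi\|_{L^p(w)}^p+\eta^p\lambda^{2p}\|\phi\|_{L^p(w)}^p$; choosing $\eta$ small (i.e.\ $\lambda$ large) makes the coefficient of the gradient term at most $\epsilon$, and $C(\epsilon):=C\eta^p\lambda^{2p}$ is a valid (finite) constant. This yields \textnormal{(i)}.

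For the direction \textnormal{(i)}$\rightarrow$\textnormal{(ii)}: fix $\epsilon>0$ and take the corresponding $C(\epsilon)$. For arbitrary $g\in C_c^\infty$ (dense in $L^p(w)$), set $\phi=(\lambda^2-\Delta)^{-\alpha/2}g$. Applying \textnormal{(i)} to $\phi$ and using Lemma 2.4 as above:
$$\|V(\lambda^2-\Delta)^{-\alpha/2}g\|_{L^p(w)}^p\leq\epsilon\|(-\Delta)^{\alpha/2}(\lambda^2-\Delta)^{-\alpha/2}g\|_{L^p(w)}^p+C(\epsilon)\|(\lambda^2-\Delta)^{-\alpha/2}g\|_{L^p(w)}^p\lesssim\epsilon\|g\|_{L^p(w)}^p+C(\epsilon)\lambda^{-\alpha p}\|g\|_{L^p(w)}^p.$$
Thus $\limsup_{\lambda\to\infty}\|V(\lambda^2-\Delta)^{-\alpha/2}\|_{L^p(w)\to L^p(w)}^p\lesssim\epsilon$, and since $\epsilon$ is arbitrary the limit is $0$, which is \textnormal{(ii)}. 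One subtlety throughout is that $V$ is only locally in $L^p(w)$ and a priori the operator $V(\lambda^2-\Delta)^{-\alpha/2}$ need not be bounded; this is resolved by noting that in \textnormal{(ii)} the operator norm is interpreted as the supremum over $C_c^\infty$ inputs (or its value is taken to be $+\infty$ if unbounded, in which case \textnormal{(i)} forces finiteness), so all the above estimates are understood as a priori inequalities on the dense class $C_c^\infty$.

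The main obstacle I anticipate is the careful bookkeeping of which operators are genuinely bounded on $L^p(w)$ and the justification of the substitutions $\phi\leftrightarrow\psi$: one must ensure that applying $(\lambda^2-\Delta)^{\pm\alpha/2}$ stays within the Schwartz class (or a dense subspace of $L^p(w)$) so that the formal identities hold, and that the quantity $\|V(\lambda^2-\Delta)^{-\alpha/2}\|_{L^p(w)\to L^p(w)}$ is a well-defined element of $[0,\infty]$. Once this functional-analytic setup is fixed, the rest is a routine application of the three norm bounds in Lemma 2.4 together with a rescaling of $\epsilon$ against $\lambda$.
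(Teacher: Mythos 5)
Your proposal is correct and takes essentially the same route as the paper: both directions are proved by substituting $\phi=(\lambda^{2}-\Delta)^{-\alpha/2}\psi$ and invoking the three operator bounds of Lemma~2.3 (which you mislabel as Lemma~2.4), with the $(\text{ii})\rightarrow(\text{i})$ direction relying on the boundedness of $(\lambda^{2}-\Delta)^{\alpha/2}(\lambda^{2}+(-\Delta)^{\alpha/2})^{-1}$ exactly as the paper does. Your final remark on interpreting the operator norm as a supremum over $C_{c}^{\infty}$ is a small but welcome clarification that the paper leaves implicit.
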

\begin{proof}
$\textnormal{(i)}\rightarrow\textnormal{(ii)}$: By applying Lemma 2.3, for any $\epsilon>0$ we see that there exists $C^{\prime}(\epsilon)>0$ that for any $\phi\in C_{c}^{\infty}$ 
\begin{align*}
    \|V(\lambda^{2}-\Delta)^{-\frac{\alpha}{2}}\phi\|_{L^{p}(w)}&\leq\epsilon\|\Delta^{\frac{\alpha}{2}}(\lambda^{2}-\Delta)^{-\frac{\alpha}{2}}\|_{L^{p}(w)}+C^{\prime}(\epsilon)\|(\lambda^{2}-\Delta)^{-\frac{\alpha}{2}}\phi\|_{L^{p}(w)}\\
    &\lesssim(\epsilon+C^{\prime}(\epsilon)\lambda^{-\alpha})\|f\|_{L^{p}(w)},
\end{align*}
which completes the proof.\\
$\textnormal{(ii)}\rightarrow\textnormal{(i)}$: We apply Lemma 2.3 to see that 
\begin{align*}
    \|V(\lambda^{2}+(-\Delta)^{\frac{\alpha}{2}})^{-1}\|_{L^{p}(w)\rightarrow L^{p}(w)}&\leq\|V(\lambda^{2}-\Delta)^{-\frac{\alpha}{2}}\|_{L^{p}(w)\rightarrow L^{p}(w)}\|(\lambda^{2}-\Delta)^{\frac{\alpha}{2}}(\lambda^{2}+(-\Delta)^{\frac{\alpha}{2}})^{-1}\|_{L^{p}(w)\rightarrow L^{p}(w)}\\
    &\lesssim\|V(\lambda^{2}-\Delta)^{-\frac{\alpha}{2}}\|_{L^{p}(w)\rightarrow L^{p}(w)}.
\end{align*}
Thus for any $\phi\in C_{c}^{\infty}$ and $\epsilon>0$, one obtains that
\begin{align*}
    \|V\phi\|_{L^{p}(w)}&\leq\|V(\lambda^{2}+(-\Delta)^{\frac{\alpha}{2}})^{-1}\|_{L^{p}(w)\rightarrow L^{p}(w)}\|(\lambda^{2}+(-\Delta)^{\frac{\alpha}{2}})\phi\|_{L^{p}(w)}\\
    &\lesssim\|V(\lambda^{2}-\Delta)^{-\frac{\alpha}{2}}\|_{L^{p}(w)\rightarrow L^{p}(w)}(\lambda^{2}\|\phi\|_{L^{p}(w)}+\|(-\Delta)^{\frac{\alpha}{2}}\phi\|_{L^{p}(w)}),
\end{align*}
which shows that \textnormal{(i)} holds true by, letting $\lambda$ be large enough.
\end{proof}
\indent Under the assumptions in Theorem 1.1, it is easy to see that for any $\lambda>0$ 
$$C_{0}(\lambda):=\|(\lambda^{2}-\Delta)^{-\frac{\alpha}{2}}(\cdot\mu)\|^{p^{\prime}}_{L^{p^{\prime}}(\mu\sigma)\rightarrow L^{p^{\prime}}(\sigma)}=\|V(\lambda^{2}-\Delta)^{-\frac{\alpha}{2}}\|^{p^{\prime}}_{L^{p}(w)\rightarrow L^{p}(w)}$$
by duality. Thus, to prove $\textnormal{(i)}\leftrightarrow\textnormal{(ii)}$ in Theorem 1.1, it is sufficient to prove the following proposition.
\begin{prop}
With the assumption in Theorem 1.1, it holds that $C_{0}(\lambda)\sim C_{1}(\lambda)$, where
$$C_{1}(\lambda):=\sup_{\mathcal{S}}\sup\limits_{\substack{\ell(Q)\leq\frac{6}{\lambda}\\Q\in\mathcal{S}}}\left\{\frac{1}{\mu w(Q)}\sum\limits_{\substack{Q^{\prime}\in\mathcal{S}\\Q^{\prime}\subseteq Q}}|Q^{\prime}|^{p^{\prime}\left(\frac{\alpha}{n}-1\right)}\mu w(Q^{\prime})^{p^{\prime}}\sigma(Q^{\prime})\right\}.$$
The first supremum is taken over all sparse families.
\end{prop}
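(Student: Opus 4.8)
The plan is to establish the two-sided estimate $C_0(\lambda)\sim C_1(\lambda)$ by passing through the local fractional maximal operator
\[
M_{\alpha,\lambda}f(x):=\sup_{\substack{Q\ni x\\ \ell(Q)\le c/\lambda}}|Q|^{\frac{\alpha}{n}-1}\int_Q|f|\,dy,
\]
which serves as a dyadic/sparse model for the local Bessel potential $(\lambda^2-\Delta)^{-\alpha/2}$. The first step is a \emph{pointwise kernel reduction}: by Lemma 2.2(ii), the kernel $G_{\alpha,\lambda}(x)=\lambda^{n-\alpha}G_\alpha(\lambda x)$ satisfies $G_{\alpha,\lambda}(x)\sim\lambda^{n-\alpha}|\lambda x|^{\alpha-n}=|x|^{\alpha-n}$ for $|x|<1/\lambda$ and decays exponentially beyond that scale. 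Splitting the convolution into the local part $|x-y|<1/\lambda$ and a rapidly decaying tail, one gets $(\lambda^2-\Delta)^{-\alpha/2}g\lesssim I_{\alpha,1/\lambda}g+(\text{harmless tail})$, and the local Riesz potential $I_{\alpha,1/\lambda}$ is in turn controlled pointwise by a sum over a sparse family (the standard sparse domination of fractional integrals, restricted to cubes of sidelength $\lesssim 1/\lambda$), so that $(\lambda^2-\Delta)^{-\alpha/2}g\lesssim \sum_{Q\in\mathcal S,\,\ell(Q)\le c/\lambda}|Q|^{\alpha/n-1}\langle g\rangle_Q\chi_Q$ plus a tail term whose operator norm is $O(\lambda^{-\alpha})$ and hence absorbable.

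The second step is the \emph{estimate $C_0(\lambda)\lesssim C_1(\lambda)$}. By the duality identity noted before the proposition, $C_0(\lambda)$ is the $p'$-th power of the norm of $g\mapsto (\lambda^2-\Delta)^{-\alpha/2}(g\mu)$ from $L^{p'}(\mu\sigma)$ to $L^{p'}(\sigma)$; wait—more precisely one tests the bilinear form $\int (\lambda^2-\Delta)^{-\alpha/2}(f)\cdot g\,d(\mu w)$ against $f\in L^p(w)$ and $g\in L^{p'}(\mu w)$ and uses the sparse bound above together with Lemma 2.7 to pass the $L^{p'}(\sigma)$-norm of $\sum_Q\lambda_Q\chi_Q$ to $(\sum_Q\lambda_Q^{p'}\sigma(Q))^{1/p'}$. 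After unwinding the averages $\langle\cdot\rangle_Q$ one arrives at a sum of the form $\sum_{Q'\subseteq Q}|Q'|^{p'(\alpha/n-1)}\mu w(Q')^{p'}\sigma(Q')$ controlled against a testing constant times $\mu w(Q)$; taking the supremum over sparse families and over $Q$ with $\ell(Q)\le 6/\lambda$ gives $C_0(\lambda)\lesssim C_1(\lambda)$. This is essentially a weighted two-weight testing argument in the spirit of Sawyer's theorem for fractional integrals, and the sparse structure makes the testing condition self-improving. The key technical input is Lemma 2.6 (Fackler--Hytönen), which collapses the double sum over $Q'\subseteq Q$ back to the single scale $Q$ and is exactly what turns a Sawyer-type testing condition into the Carleson-embedding form appearing in $C_1(\lambda)$.

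The third step is the \emph{reverse estimate $C_1(\lambda)\lesssim C_0(\lambda)$}, obtained by \emph{testing the operator on explicit functions}. Fix a sparse family $\mathcal S$ and $Q\in\mathcal S$ with $\ell(Q)\le 6/\lambda$; take $f=\sum_{Q'\in\mathcal S,\,Q'\subseteq Q}|Q'|^{p'(\alpha/n-1)/p'}\cdots$—concretely one chooses $f$ adapted to $\mu w$ on the subcubes so that, using the \emph{lower} bound $G_{\alpha,\lambda}(x-y)\gtrsim|x-y|^{\alpha-n}$ for $x,y$ in a common cube of sidelength $\lesssim 1/\lambda$, the potential $(\lambda^2-\Delta)^{-\alpha/2}(f\,d\mu w)$ is bounded below on each $Q'$ by a multiple of $\sum_{Q''\subseteq Q'}|Q''|^{\alpha/n-1}\mu w(Q'')$; pairing with the dual exponent and invoking Lemma 2.7 in the converse direction (lower bound via disjointness of the $E_{Q'}$) reproduces the sum defining $C_1(\lambda)$, bounded by $C_0(\lambda)\,\mu w(Q)$. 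I expect the main obstacle to be exactly this lower-bound/testing direction: one must choose the test function so that the local Bessel kernel's positivity is genuinely exploited at \emph{every} scale $Q'\subseteq Q$ simultaneously (not just at the top cube), while keeping the $L^p(w)$-norm of $f$ comparable to the right-hand side; handling the interaction between overlapping cubes of the sparse family and the restriction $\ell(Q)\le 6/\lambda$ (so that all relevant pairs $x,y$ stay within the "good" region $|x-y|<1/\lambda$ where $G_{\alpha,\lambda}\sim|x-y|^{\alpha-n}$) is where the bookkeeping is most delicate. The passage $\ell(Q)\le\delta\leftrightarrow\ell(Q)\le 6/\lambda$ with $\delta\sim1/\lambda$ then yields the equivalence of (ii) with condition (ii) of Theorem 1.1 upon letting $\lambda\to\infty$.
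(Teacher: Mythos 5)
Your top-level skeleton (reduce $(\lambda^2-\Delta)^{-\alpha/2}$ to a local Riesz potential via the kernel bounds in Lemma 2.2, then to a sparse operator, then estimate from above; test on a simple function for the converse) is the same as the paper's. But the two key mechanisms are misidentified or missing.

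For the upper bound $C_0\lesssim C_1$, you assert that the Fackler--Hyt\"onen lemma is what ``collapses the double sum over $Q'\subseteq Q$ back to the single scale $Q$.'' That lemma (which is Lemma 2.5, not 2.6) requires a positive power $|Q'|^{\alpha_1}$ with $\alpha_1>0$; here the exponent on $|Q'|$ is $p'(\alpha/n-1)<0$, so it simply does not apply. The actual mechanism in the paper (Lemma 3.6) is a two-weight Carleson embedding argument: setting $a_Q=|Q|^{(\alpha/n-1)p'}\sigma(Q)\mu w(Q)^{p'}$ and $T_Qg=\langle g\rangle^{\mu w}_Q$, one proves a weak-type bound $L^1(\mu w)\to \ell^{1,\infty}(\{a_Q\})$ with constant $C_1(\lambda)$ by selecting maximal cubes $Q_k$ at each level set and summing the testing condition over those $Q_k$; then interpolating against the trivial $L^\infty\to\ell^\infty$ bound gives the $\ell^{p'}(\{a_Q\})$ estimate with constant $C_1(\lambda)^{1/p'}$. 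That weak-type plus interpolation step is precisely what turns the single-cube testing quantity into the full sparse-operator bound, and it is absent from your proposal (after Lemma 2.6 reduces the $L^{p'}(\sigma)$-norm to $\sum_Q \lambda_Q^{p'}\sigma(Q)$, you still have averages $\langle g\rangle^{\mu w}_Q$ to dispose of). Also note that Fackler--Hyt\"onen is used in the paper only in the proof of Corollary 1.3, not here.

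For the lower bound $C_1\lesssim C_0$, you propose building an elaborate test function adapted to every scale $Q'\subseteq Q$ simultaneously, and then you invoke a ``converse'' of the sparse upper bound ``via disjointness of the $E_{Q'}$'' --- a lemma that is not in the paper and that you have not stated. This whole machinery is unnecessary. The paper simply tests the single function $h=\chi_Q w$, which after duality reduces $C_0(\lambda)$ to $\frac{1}{\mu w(Q)}\int_Q\bigl|(\lambda^2-\Delta)^{-\alpha/2}(\chi_Q\mu w)\bigr|^{p'}d\sigma$. For $x,y\in Q$ with $\ell(Q)\le 6/\lambda$ one has $|x-y|\lesssim 1/\lambda$, so Lemma 2.2 gives $(\lambda^2-\Delta)^{-\alpha/2}(\chi_Q\mu w)(x)\sim\int_Q|x-y|^{\alpha-n}d\mu w(y)$, and the standard pointwise lower bound of the (truncated) Riesz integral by its dyadic sum (Moen--Cruz-Uribe) immediately yields $\gtrsim \sum_{Q'\in\mathcal D^t(Q)}|Q'|^{\alpha/n-1}\mu w(Q')\chi_{Q'}(x)$; raising to the $p'$, integrating in $\sigma$, and discarding cross terms produces exactly the sum in $C_1(\lambda)$ (over $\mathcal D^t(Q)$, hence over any sparse $\mathcal S\subseteq\mathcal D^t$). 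The subtle ``simultaneous exploitation of positivity at every scale'' you worry about is not needed: a single indicator and the known dyadic minorization of $I_\alpha$ already carry every scale.

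A minor remark on Step 1: you claim the tail (the part $|x-y|>1/\lambda$) has operator norm $O(\lambda^{-\alpha})$ and is ``hence absorbable.'' This is not how the paper handles it, nor is it obviously sufficient: the paper shows (after a careful lattice decomposition and H\"older on the exponentially decaying kernel) that the tail is also controlled by $M_{\alpha,\lambda}g$ in $L^p(w)$, which feeds into the same sparse bound; an unconditional $O(\lambda^{-\alpha})$ bound on the tail does not by itself compare to $C_1(\lambda)^{1/p'}$.
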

\indent We shall prove Proposition 3.2 by using the sparse domination. To do so, we control the resolvent of the Laplacian by the local fractional maximal operator;
$$M_{\alpha,\lambda}f(x):=\sup_{0<r\leq\frac{1}{\lambda}}\frac{1}{r^{n-\alpha}}\int_{B(x,r)}|f(y)|dy.$$
\begin{lem}
Let $0<\alpha<n$, $1<p<\infty$ and $\lambda>0$, then for $w\in A_{\infty}$, it holds
$$\|(\lambda^{2}-\Delta)^{-\frac{\alpha}{2}}g\|_{L^{p}(w)}\lesssim\|M_{\alpha,\lambda}g\|_{L^{p}(w)}.$$
\end{lem}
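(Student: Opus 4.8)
The plan is to reduce to the case $\lambda=1$ by scaling and then to dominate the Bessel potential pointwise by a positive integral operator whose kernel is a Riesz kernel truncated, with exponential decay, at scale one. Writing $(\lambda^{2}-\Delta)^{-\alpha/2}g=G_{\alpha,\lambda}*g$ and using $G_{\alpha,\lambda}(x)=\lambda^{n-\alpha}G_{\alpha}(\lambda x)$ (Lemma 2.1(i)) together with the dilation identities $\|f(\cdot/\lambda)\|_{L^{p}(w(\cdot/\lambda))}=\lambda^{n/p}\|f\|_{L^{p}(w)}$ and $[w(\lambda\cdot)]_{A_{\infty}}=[w]_{A_{\infty}}$, one checks that $\|(\lambda^{2}-\Delta)^{-\alpha/2}g\|_{L^{p}(w)}=\lambda^{-\alpha-n/p}\|G_{\alpha}*g_{\lambda}\|_{L^{p}(w_{\lambda})}$ and $\|M_{\alpha,\lambda}g\|_{L^{p}(w)}=\lambda^{-\alpha-n/p}\|M_{\alpha,1}g_{\lambda}\|_{L^{p}(w_{\lambda})}$, where $g_{\lambda}=g(\cdot/\lambda)$, $w_{\lambda}=w(\cdot/\lambda)\in A_{\infty}$; hence it suffices to treat $\lambda=1$. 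By Lemma 2.1(ii), $G_{\alpha}(z)\lesssim|z|^{\alpha-n}$ for $|z|<1$ and $G_{\alpha}(z)\lesssim e^{-a|z|}\lesssim|z|^{\alpha-n}e^{-a|z|/2}$ for $|z|\ge1$ (as $\alpha<n$), so $G_{\alpha}(z)\lesssim|z|^{\alpha-n}e^{-c|z|}$ for all $z\neq0$ with $c=a/2$. Thus it is enough to prove $\|Kg\|_{L^{p}(w)}\lesssim\|M_{\alpha,1}g\|_{L^{p}(w)}$ for the positive operator $Kg(x):=\int_{\mathbb{R}^{n}}|x-y|^{\alpha-n}e^{-c|x-y|}|g(y)|\,dy$.

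The main step is a pointwise sparse domination of $K$ involving only cubes of bounded sidelength: there is a sparse family $\mathcal{S}=\mathcal{S}(g)$ (with $\mathcal{S}\subseteq\mathcal{D}^{t}$ for some $t$) such that
\[
Kg(x)\lesssim\sum_{\substack{Q\in\mathcal{S}\\ \ell(Q)\le c_{0}}}|Q|^{\frac{\alpha}{n}-1}\Big(\int_{3Q}|g|\Big)\chi_{Q}(x),
\]
where $c_{0}=c_{0}(n)$ is a small dimensional constant. This follows from the standard sparse‑domination machinery for fractional integral operators (a Calderón–Zygmund stopping time run on the averages $\langle|g|\rangle_{3Q}$), the new point being that the exponential factor $e^{-c|z|}$ makes the dyadic tails $\sum_{m\ge0}2^{m(\alpha-n)}e^{-c2^{m}}$ summable, so that distant/large cubes are exponentially suppressed and only cubes of sidelength $\lesssim1$ survive. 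Granting this, fix $Q$ with $\ell(Q)\le c_{0}$ and any $z\in Q$: then $3Q\subseteq B(z,3\sqrt{n}\,\ell(Q))$ and $3\sqrt{n}\,\ell(Q)\le1$ once $c_{0}\le1/(3\sqrt{n})$, so $\int_{3Q}|g|\le(3\sqrt{n}\,\ell(Q))^{n-\alpha}M_{\alpha,1}g(z)$ and therefore $|Q|^{\frac{\alpha}{n}-1}\int_{3Q}|g|\lesssim\inf_{z\in Q}M_{\alpha,1}g(z)=:\lambda_{Q}$. Hence $Kg\lesssim\sum_{Q\in\mathcal{S}}\lambda_{Q}\chi_{Q}$, and Lemma 2.5 applied with the $A_{\infty}$ weight $w$ in place of $\sigma$ yields $\|Kg\|_{L^{p}(w)}^{p}\lesssim\sum_{Q\in\mathcal{S}}\lambda_{Q}^{p}\,w(Q)$. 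Finally, since $w\in A_{\infty}$ gives $w(Q)\lesssim w(E_{Q})$ for the sparse sets $E_{Q}\subseteq Q$ with $|E_{Q}|\ge|Q|/2$, and the $E_{Q}$ are pairwise disjoint,
\[
\sum_{Q\in\mathcal{S}}\lambda_{Q}^{p}\,w(Q)\lesssim\sum_{Q\in\mathcal{S}}\Big(\inf_{Q}M_{\alpha,1}g\Big)^{p}w(E_{Q})\le\sum_{Q\in\mathcal{S}}\int_{E_{Q}}(M_{\alpha,1}g)^{p}\,w\,dx\le\|M_{\alpha,1}g\|_{L^{p}(w)}^{p},
\]
and undoing the scaling finishes the proof.

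The hard part is establishing the bounded‑scale sparse domination, i.e. genuinely exploiting the exponential decay of the Bessel kernel; equivalently, one must control the "far" piece $\int_{|x-y|>1}e^{-c|x-y|}|g(y)|\,dy$ in $L^{p}(w)$ by $M_{\alpha,1}g$. This is not a pointwise estimate — already the truncated Riesz potential $\int_{|x-y|<1}|x-y|^{\alpha-n}|g(y)|\,dy$ can be infinite at points where $M_{\alpha,1}g$ is finite — so the $A_{\infty}$/doubling structure of $w$ (through $w(Q)\lesssim w(E_{Q})$ and through Lemma 2.5) is used in an essential way, exactly as in the classical Muckenhoupt–Wheeden bound $\|I_{\alpha}f\|_{L^{p}(w)}\lesssim\|M_{\alpha}f\|_{L^{p}(w)}$. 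If one prefers not to invoke sparse domination, the same conclusion follows by the good‑$\lambda$ method: cover $\{Kg>2t\}$ by Whitney cubes, show on each cube that $w\big(\{Kg>2t,\ M_{\alpha,1}g\le\gamma t\}\cap Q\big)\le\eta(\gamma)\,w(Q)$ with $\eta(\gamma)\to0$ as $\gamma\to0$, using the $\epsilon$-absolute continuity of $A_{\infty}$ weights with respect to Lebesgue measure on cubes, and then integrate in $t$.
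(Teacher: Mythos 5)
Your scaling reduction to $\lambda=1$ and the combined kernel bound $G_\alpha(z)\lesssim|z|^{\alpha-n}e^{-c|z|}$ are both correct, and your instinct that the exponential tail must be exploited through an $L^p(w)$ (rather than pointwise) mechanism is sound. However, the step your argument leans on --- the pointwise sparse domination
\[
Kg(x)\lesssim\sum_{\substack{Q\in\mathcal{S}\\ \ell(Q)\le c_0}}|Q|^{\frac{\alpha}{n}-1}\Big(\int_{3Q}|g|\Big)\chi_Q(x)
\]
with cubes of sidelength at most $c_0$ --- is false. Take $g=\chi_{B(0,\varepsilon)}$ and a point $x$ with $|x|=R\gg\max(1,c_0)$: then $Kg(x)\gtrsim R^{\alpha-n}e^{-cR}\varepsilon^n>0$, while every cube $Q\ni x$ with $\ell(Q)\le c_0$ has $3Q\cap B(0,\varepsilon)=\varnothing$ once $R>3\sqrt{n}\,c_0+\varepsilon$, so the right-hand side is identically zero at $x$. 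The convergence of $\sum_m 2^{m(\alpha-n)}$ never needed the exponential factor (it already holds because $\alpha<n$), and what the exponential cannot do is relocate contributions from far-away mass into a small cube around $x$; this is precisely the obstruction that, as you yourself observe, prevents a pointwise bound of $Kg$ by $M_{\alpha,1}g$. (Also, the domination $\|\sum_Q\lambda_Q\chi_Q\|_{L^p(\sigma)}^p\lesssim\sum_Q\lambda_Q^p\sigma(Q)$ is Lemma 2.6, not 2.5.)

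The paper's proof instead splits $G_{\alpha,\lambda}*g$ at $|x-y|=1/\lambda$. For the near piece, only cubes of sidelength $\lesssim1/\lambda$ are relevant, and the weighted bound $\|I_{\alpha,\lambda}g\|_{L^p(w)}\lesssim\|M_{\alpha,\lambda}g\|_{L^p(w)}$ is quoted from Turesson. For the far piece there is no sparse domination at all: one tiles $\mathbb{R}^n$ by cubes $\{Q_j\}$ of sidelength $\sim1/\lambda$, applies H\"older on the exponentially weighted sum over $j$, uses the doubling property of $w\in A_\infty$ to get $\int e^{-a\lambda|x-x_j|}w(x)\,dx\lesssim w(Q_j)$, and then, since $Q_j\subseteq B(x,1/\lambda)$ for $x\in Q_j$, one has $\lambda^{n-\alpha}\int_{Q_j}|g|\lesssim M_{\alpha,\lambda}g(x)$, yielding $\mathrm{II}\lesssim\|M_{\alpha,\lambda}g\|_{L^p(w)}$. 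This far-piece estimate (exponential sum plus doubling, carried out directly in $L^p(w)$) is exactly the ingredient your sparse-domination step cannot replace, and your good-$\lambda$ sketch would likewise have to incorporate a separate treatment of the long-range part of the kernel before it could close.
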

\begin{proof}
\begin{align*}
    \|(\lambda^{2}-\Delta)^{-\frac{\alpha}{2}}g\|_{L^{p}(w)}&\leq\left[\int_{\mathbb{R}^{n}}\left(\int_{\mathbb{R}^{n}}G_{\alpha,\lambda}(x-y)|g(y)|dy\right)^{p}w(x)dx\right]^{\frac{1}{p}}\\
    &\leq\left[\int_{\mathbb{R}^{n}}\left(\int_{|x-y|\leq\frac{1}{\lambda}}G_{\alpha,\lambda}(x-y)|g(y)|dy\right)^{p}w(x)dx\right]^{\frac{1}{p}}\\
    &+\left[\int_{\mathbb{R}^{n}}\left(\int_{|x-y|>\frac{1}{\lambda}}G_{\alpha,\lambda}(x-y)|g(y)|dy\right)^{p}w(x)dx\right]^{\frac{1}{p}}=:I+II.
\end{align*}
\underline{Estimate for I}: From Lemma 2.2, we have $G_{\alpha,\lambda}(x-y)\lesssim|x-y|^{\alpha-n}$ for $|x-y|<1/\lambda$. If we denote 
$$I_{\alpha,\lambda}g(x):=\int_{|x-y|\leq\frac{1}{\lambda}}|g(y)|dy,$$
one obtains
\begin{align*}
    I\lesssim\left[\int_{\mathbb{R}^{n}}\left(\int_{|x-y|\leq\frac{1}{\lambda}}G_{\alpha,\lambda}(x-y)|g(y)|dy\right)^{p}w(x)dx\right]^{\frac{1}{p}}=\|I_{\alpha,\lambda}g\|_{L^{p}(w)}\lesssim\|M_{\alpha,\lambda}g\|_{L^{p}(w)}.
\end{align*}
Here we have used the inequality by Turesson \cite[Theorem 3.1.2]{T20}.\\
\smallskip\\
\underline{Estimate for II}: there exists $a>0$ that for $|x-y|>1/\lambda$, $G_{\alpha,\lambda}(x-y)\lesssim\lambda^{n-\alpha}e^{-a\lambda|x-y|}$. Hence,
\begin{align*}
\int_{|x-y|>\frac{1}{\lambda}}G_{\alpha}(x-y)|g(y)|dy\lesssim \lambda^{n-\alpha}\int_{|x-y|>\frac{1}{\lambda}}e^{-a\lambda|x-y|}|g(y)|dy\tag{3.1}
\end{align*}
We subdivide $\mathbb{R}^{n}$ into closed cubes $\{Q_{j}\}_{j=1}^{\infty}$ such that $\ell(Q)=1/(2\sqrt{n}\lambda)$. For $x\in\mathbb{R}^{n}$, we denote $d(x,Q):=\inf\{|x-y|:\ y\in Q\}$. By H\"{o}lder inequality, we have that
\begin{align*}
    \left(\int_{|x-y|>\frac{1}{\lambda}}G_{\alpha}(x-y)|g(y)|dy\right)^{p}&\lesssim\lambda^{p(n-\alpha)}\left(\sum_{j}\int_{Q_{j}}e^{-a\lambda|x-y|}|g(y)|dy\right)^{p}\\
    &\leq\lambda^{p(n-\alpha)}\left(\sum_{j}\int_{Q_{j}}e^{-a\lambda d(x,Q_{j})}|g(y)|dy\right)^{p}\\
    &\leq\lambda^{p(n-\alpha)}\left[\sum_{j}e^{-a\lambda d(x,Q_{j})}\left(\int_{Q_{j}}|g(y)|dy\right)^{p}\right]\left(\sum_{j}e^{-a\lambda d(x,Q_{j})}\right)^{\frac{p}{p^{\prime}}}.\tag{3.2}
\end{align*}
It is not hard to see that $\sup_{x\in\mathbb{R}^{n}}\left(\sum_{j}e^{-a\lambda d(x,Q_{j})}\right)^{\frac{p}{p^{\prime}}}<\infty$. This follows from the fact that for any fixed $k\in\mathbb{Z}^{+}$, there exists $C>0$ so that the number of cubes $Q\in\{Q_{j}\}$ satisfying $d(x,Q)\leq k/\lambda$ is bounded by $Ck^{n}$. Besides, if $x_{j}$ be the center of $Q_{j}$, then $d(x,Q_{j})\geq|x-x_{j}|-1/(4\lambda)$ for any $x\in\mathbb{R}^{n}$. We can see that
$$\int_{\mathbb{R}^{n}}e^{-a\lambda d(x,Q_{j})}\left(\int_{Q_{j}}|g(y)|dy\right)^{p}w(x)dx\lesssim\left(\int_{Q_{j}}|g(y)|dy\right)^{p}\int_{\mathbb{R}^{n}}e^{-a\lambda |x-x_{j}|}w(x)dx.$$
Moreover,
\begin{align*}
    \int_{\mathbb{R}^{n}}e^{-a\lambda |x-x_{j}|}w(x)dx&=\sum_{k=0}^{\infty}\int_{\frac{k}{\lambda}\leq|x-x_{j}|<\frac{k+1}{\lambda}}e^{-a\lambda|x-x_{j}|}w(x)dx\\
    &\leq\sum_{k=0}^{\infty}e^{-ak}w(B(x_{j},{(k+1)}/{\lambda})).
\end{align*}
Since $w\in A_{\infty}$ satisfies the doubling condition, we have
$$w(B(x_{j},(k+1)/\lambda))\lesssim\left({k+1}\right)^{nr}w(B(x_{j},1/\lambda))\lesssim(k+1)^{nr}w(Q_{j}),$$
for some $r>0$. Hence it holds
\begin{align*}
    II&\lesssim\lambda^{n-\alpha}\left(\sum_{j}\left(\int_{Q_{j}}|g(y)|dy\right)^{p}\int_{\mathbb{R}^{n}}e^{-a\lambda |x-x_{j}|}w(x)dx\right)^{\frac{1}{p}}\\
    &\lesssim\lambda^{n-\alpha}\left(\sum_{j}\left(\int_{Q_{j}}|g(y)|dy\right)^{p}w(Q_{j})\right)^{\frac{1}{p}}.\tag{3.3}
\end{align*}
If $x\in Q_{j}$, then $Q_{j}\subseteq B(x,1/\lambda)$. Thus
$$\lambda^{n-\alpha}\int_{Q_{j}}|g(y)|dy\lesssim M_{\alpha,\lambda}g(x)\quad\text{for any }x\in Q_{j},$$
hence
$$(3.3)\lesssim\left(\int_{\mathbb{R}^{n}}M_{\alpha,\lambda}g(x)^{p}w(x)dx\right)^{\frac{1}{p}}=\|M_{\alpha,\lambda}g\|_{L^{p}(w)}.$$
\end{proof}
\indent Next we follow the definition of local analogies of the Riesz Potential and fractional sparse operator introduced by Cao, Dent and Jin \cite{CDJ25}.
\begin{defi}
\textnormal{(Cao, Deng and Jin \cite{CDJ25})} Let $0<\alpha<n$ and $\lambda>0$. For $t\in\{0,1,2\}^{n}$ and a sparse family $\mathcal{S}\subseteq\mathcal{D}^{t}$, we define
$$I_{\alpha,\lambda}^{\mathcal{D}^{t}}g(x):=\sum\limits_{\substack{Q\in\mathcal{D}^{t}\\\ell(Q)\leq6/\lambda}}|Q|^{\frac{\alpha}{n}-1}\int_{Q}|g(y)|dy\cdot\chi_{Q}(x),$$
$$I_{\alpha,\lambda}^{\mathcal{S}}g(x):=\sum\limits_{\substack{Q\in\mathcal{S}\\\ell(Q)\leq6/\lambda}}|Q|^{\frac{\alpha}{n}-1}\int_{Q}|g(y)|dy\cdot\chi_{Q}(x).$$
\end{defi}
\indent It is well-known that the Riesz potential is controlled by the fractional sparse operator. Similar proposition holds for the local version.
\begin{lem}
\textnormal{(Cao, Deng and Jin \cite{CDJ25})} Given $0<\alpha<n$, $\lambda>0$, it holds
$$I_{\alpha,\lambda}g(x)\lesssim\sum_{t\in\{0,1,2\}}^{n}I_{\alpha,\lambda}^{\mathcal{D}^{t}}g(x).$$
For $t\in\{0,1,2\}^{n}$, there exists a sparse family $\mathcal{S}$ depending on $g$ and $\lambda$, so that for any $x\in\mathbb{R}^{n}$,
$$I_{\alpha,\lambda}^{\mathcal{D}^{t}}g(x)\lesssim I_{\alpha,\lambda}^{\mathcal{S}}g(x),$$
where the implicit constant is independent of $g$, $\lambda$ and $x$.
\end{lem}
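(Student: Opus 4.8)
The plan is to establish the two inequalities separately, each by now-standard dyadic/sparse technology; this is essentially the argument of Cao--Deng--Jin \cite{CDJ25}, and I would cite it while recording the short derivation.

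\emph{First inequality.} I would dyadically decompose the truncated Riesz kernel: writing
$$I_{\alpha,\lambda}g(x)=\sum_{k\ge 0}\int_{\frac{2^{-k-1}}{\lambda}<|x-y|\le\frac{2^{-k}}{\lambda}}|x-y|^{\alpha-n}|g(y)|\,dy,$$
I use $\alpha<n$ to bound $|x-y|^{\alpha-n}\lesssim(2^{-k}/\lambda)^{\alpha-n}$ on the $k$-th annulus, so the $k$-th term is $\lesssim(2^{-k}/\lambda)^{\alpha-n}\int_{B(x,2^{-k}/\lambda)}|g|$. Since any ball $B(x,r)$ lies in some $Q\in\mathcal{D}^{t}$ with $2r\le\ell(Q)\le 6r$, I get $t$ and $Q_{k}\in\mathcal{D}^{t}$ with $B(x,2^{-k}/\lambda)\subseteq Q_{k}$, $x\in Q_{k}$, $\ell(Q_{k})\le 6/\lambda$, and $|Q_{k}|^{\frac{\alpha}{n}-1}=\ell(Q_{k})^{\alpha-n}\sim(2^{-k}/\lambda)^{\alpha-n}$; hence the $k$-th term is $\lesssim|Q_{k}|^{\frac{\alpha}{n}-1}\int_{Q_{k}}|g|\cdot\chi_{Q_{k}}(x)$. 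The correspondence $k\mapsto Q_{k}$ has bounded multiplicity, and for each fixed $t$ these cubes occur among those summed in $I^{\mathcal{D}^{t}}_{\alpha,\lambda}g(x)$, so summing over $k$ and then over $t$ yields $I_{\alpha,\lambda}g(x)\lesssim\sum_{t}I^{\mathcal{D}^{t}}_{\alpha,\lambda}g(x)$.

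\emph{Second inequality.} Fix $t$ and set $\langle|g|\rangle_{Q}:=|Q|^{-1}\int_{Q}|g|$. I would run a Calder\'on--Zygmund stopping time: start with the maximal cubes $Q^{0}_{i}\in\mathcal{D}^{t}$ of side $\le 6/\lambda$, and recursively adjoin, to an already selected cube $R$, the maximal $Q'\in\mathcal{D}^{t}(R)$ with $Q'\ne R$ and $\langle|g|\rangle_{Q'}>2\langle|g|\rangle_{R}$; let $\mathcal{S}$ be the resulting collection. The usual estimate $\sum_{Q'}|Q'|\le(2\langle|g|\rangle_{R})^{-1}\int_{R}|g|=|R|/2$ over the selected children of $R$ shows $\mathcal{S}$ is sparse, and every $R\in\mathcal{S}$ satisfies $\ell(R)\le 6/\lambda$. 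For $Q\in\mathcal{D}^{t}$ with $\ell(Q)\le 6/\lambda$, let $\pi(Q)\in\mathcal{S}$ be the minimal sparse cube containing $Q$; maximality forces $\langle|g|\rangle_{Q}\le 2\langle|g|\rangle_{\pi(Q)}$. Using $|Q|^{\frac{\alpha}{n}-1}\int_{Q}|g|=\ell(Q)^{\alpha}\langle|g|\rangle_{Q}$,
$$I^{\mathcal{D}^{t}}_{\alpha,\lambda}g(x)=\sum_{\substack{Q\in\mathcal{D}^{t}\\ \ell(Q)\le 6/\lambda}}\ell(Q)^{\alpha}\langle|g|\rangle_{Q}\chi_{Q}(x)\le 2\sum_{R\in\mathcal{S}}\langle|g|\rangle_{R}\sum_{\substack{Q\subseteq R\\ \pi(Q)=R}}\ell(Q)^{\alpha}\chi_{Q}(x);$$
for fixed $x\in R$ the inner sum is at most $\sum_{j\ge 0}(2^{-j}\ell(R))^{\alpha}\lesssim\ell(R)^{\alpha}$ since $\alpha>0$, so $I^{\mathcal{D}^{t}}_{\alpha,\lambda}g(x)\lesssim\sum_{R\in\mathcal{S}}\ell(R)^{\alpha}\langle|g|\rangle_{R}\chi_{R}(x)=I^{\mathcal{S}}_{\alpha,\lambda}g(x)$, with constants independent of $g$, $\lambda$, $x$.

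I expect the whole thing to be routine; the two points to watch are that the side-length window $2^{-k}/\lambda\lesssim\ell(Q_{k})\le 6/\lambda$ is precisely what keeps each $Q_{k}$ admissible in $I^{\mathcal{D}^{t}}_{\alpha,\lambda}$ (this, together with the comparison of kernel values across a single annulus, is where $\alpha<n$ enters), and that although the sparse family $\mathcal{S}$ depends on both $g$ and $\lambda$ — through the top scale $6/\lambda$ and the stopping thresholds — its sparseness constant and all the implicit constants above stay absolute, while the geometric summation over descendant cubes converges exactly because $\alpha>0$.
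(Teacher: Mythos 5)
This lemma is attributed to Cao--Deng--Jin \cite{CDJ25} and the paper cites it without proof, so there is no in-paper argument to compare against. Your proposal is the standard dyadic/sparse domination of a truncated Riesz potential, and both halves are structurally sound. The stopping-time half is correct exactly as written: the estimate $\sum_{Q'}|Q'|\le|R|/2$ over the selected children of $R$ gives the $1/2$-sparseness required by the paper's definition; the inequality $\langle|g|\rangle_{Q}\le 2\langle|g|\rangle_{\pi(Q)}$ is forced by maximality; the geometric summation over descendants of $R$ containing $x$ uses $\alpha>0$; and by construction every $R\in\mathcal{S}$ has $\ell(R)\le 6/\lambda$, so the bound is genuinely by $I^{\mathcal{S}}_{\alpha,\lambda}g$.

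In the first half there is a constant mismatch you should close. The one-third trick for the paper's shifted grids $\mathcal{D}^{t}$ gives, for an arbitrary axis-parallel cube $P$, a $Q\in\mathcal{D}^{t}$ with $P\subseteq Q$ and $\ell(Q)\le 6\ell(P)$ (one needs $\ell(Q)>3\ell(P)$ so that the spacing $\ell(Q)/3$ between the union of all grid boundaries exceeds $\ell(P)$, and then the smallest admissible dyadic scale is at most $6\ell(P)$). For a ball $B(x,r)$ one applies this to the circumscribing cube of side $2r$, so the resulting shifted dyadic cube satisfies $\ell(Q)\le 12r$, not $\le 6r$ as you state. With your annular decomposition this means the outermost shell $k=0$ is placed in a cube of side up to $12/\lambda$, which is excluded by the restriction $\ell(Q)\le 6/\lambda$ in the definition of $I^{\mathcal{D}^{t}}_{\alpha,\lambda}$. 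This is a bookkeeping gap rather than a conceptual one --- for instance, one can peel off the shell $|x-y|\in(1/(2\lambda),1/\lambda]$ by hand, or reconcile the cutoff $6/\lambda$ against whatever truncation Cao--Deng--Jin use in their own definition of $I_{\alpha,\lambda}$ --- but as written the chain ``$B(x,2^{-k}/\lambda)\subseteq Q_{k}$ with $\ell(Q_{k})\le 6/\lambda$'' does not follow from the one-third trick for $k=0$, and you should say how you recover it. Everything else, including the bounded multiplicity of $k\mapsto Q_{k}$ within a fixed grid $t$, is fine.
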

\indent The local fractional sparse operator is bounded on $L^{p}$ as follows
\begin{lem}
Let $1<p^{\prime}<\infty$, $0<\alpha<n$ and $\lambda>0$, then for any $g\in C_{c}^{\infty}$ and $\mu w\in A_{\infty}$, it holds
$$\int_{\mathbb{R}^{n}}|I^{\mathcal{S}}_{\alpha,\lambda}(g\mu w)(x)|^{p^{\prime}}d\sigma(x)\lesssim C_{1}(\lambda)\int_{\mathbb{R}^{n}}|g|^{p^{\prime}}d\mu w(x),$$
where the implicit constant depends on $n$, $p$ and $[\mu w]_{A_{\infty}}$, and the constant $C_{1}(\lambda)$ is defined in Proposition 3.2.
\end{lem}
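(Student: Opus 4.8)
The plan is to convert the asserted weighted $L^{p'}$ inequality into an instance of the Carleson embedding theorem for the measure $\mu w$. First I would set $\mathcal{S}':=\{Q\in\mathcal{S}:\ell(Q)\le 6/\lambda\}$, which is a sparse subfamily of one grid $\mathcal{D}^{t}$, and put $\lambda_{Q}:=|Q|^{\frac{\alpha}{n}-1}\int_{Q}|g|\,d\mu w\ge 0$, so that $I^{\mathcal{S}}_{\alpha,\lambda}(g\mu w)=\sum_{Q\in\mathcal{S}'}\lambda_{Q}\chi_{Q}$. Since $w\in A_{p}$ forces $\sigma=w^{1-p'}\in A_{p'}\subseteq A_{\infty}$, I would apply Lemma 2.6 with exponent $p'$ and weight $\sigma$ to obtain
$$\int_{\mathbb{R}^{n}}\bigl|I^{\mathcal{S}}_{\alpha,\lambda}(g\mu w)\bigr|^{p'}\,d\sigma\lesssim\sum_{Q\in\mathcal{S}'}\lambda_{Q}^{p'}\sigma(Q)=\sum_{Q\in\mathcal{S}'}a_{Q}\bigl(\langle|g|\rangle^{\mu w}_{Q}\bigr)^{p'},\qquad a_{Q}:=|Q|^{p'\left(\frac{\alpha}{n}-1\right)}\mu w(Q)^{p'}\sigma(Q),$$
where $\langle\,\cdot\,\rangle^{\mu w}_{Q}$ denotes the $\mu w$-average over $Q$. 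It would then remain to prove $\sum_{Q\in\mathcal{S}'}a_{Q}(\langle|g|\rangle^{\mu w}_{Q})^{p'}\lesssim C_{1}(\lambda)\int_{\mathbb{R}^{n}}|g|^{p'}\,d\mu w$.

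The key point I would exploit is that $\{a_{Q}\}$ satisfies a Carleson condition relative to $\mu w$ whose constant is exactly $C_{1}(\lambda)$: for any $Q\in\mathcal{S}'$ the very definition of $C_{1}(\lambda)$ — with $\mathcal{S}$ itself as the competing sparse family, which is admissible since the first supremum in $C_{1}(\lambda)$ ranges over all of them and $\ell(Q)\le 6/\lambda$ — gives $\sum_{Q'\in\mathcal{S},\,Q'\subseteq Q}a_{Q'}\le C_{1}(\lambda)\mu w(Q)$, and discarding the terms with $Q'\notin\mathcal{S}'$ only lowers the left side. With this in hand I would run the standard level-set proof of the Carleson embedding theorem: for $k\in\mathbb{Z}$ let $\mathcal{E}_{k}:=\{Q\in\mathcal{S}':2^{k}<\langle|g|\rangle^{\mu w}_{Q}\le 2^{k+1}\}$ and let $\{R^{k}_{j}\}_{j}$ be the maximal cubes of $\mathcal{S}'$ whose $\mu w$-average of $|g|$ exceeds $2^{k}$; these are pairwise disjoint, each is contained in $\{M^{\mu w}_{\mathcal{D}^{t}}g>2^{k}\}$, and every $Q\in\mathcal{E}_{k}$ sits inside some $R^{k}_{j}$, whence $\sum_{Q\in\mathcal{E}_{k}}a_{Q}\le\sum_{j}C_{1}(\lambda)\mu w(R^{k}_{j})\le C_{1}(\lambda)\mu w(\{M^{\mu w}_{\mathcal{D}^{t}}g>2^{k}\})$. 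Summing over $k$ and invoking the $L^{p'}(\mu w)$-boundedness of the dyadic maximal operator $M^{\mu w}_{\mathcal{D}^{t}}$ (valid for any measure, with constant depending only on $p'$) would yield
$$\sum_{Q\in\mathcal{S}'}a_{Q}\bigl(\langle|g|\rangle^{\mu w}_{Q}\bigr)^{p'}\lesssim C_{1}(\lambda)\sum_{k}2^{kp'}\mu w\bigl(\{M^{\mu w}_{\mathcal{D}^{t}}g>2^{k}\}\bigr)\lesssim C_{1}(\lambda)\bigl\|M^{\mu w}_{\mathcal{D}^{t}}g\bigr\|_{L^{p'}(\mu w)}^{p'}\lesssim C_{1}(\lambda)\int_{\mathbb{R}^{n}}|g|^{p'}\,d\mu w,$$
which, combined with the first display, would close the argument.

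I do not expect a serious obstacle — the proof is short once the Carleson structure is recognized — but three places need care. The first is keeping the localization $\ell(Q)\le 6/\lambda$ present at every step, so that it is precisely $C_{1}(\lambda)$, with its built-in $6/\lambda$-restriction, that bounds the Carleson norm of $\{a_{Q}\}$; dropping the restriction would only make the proof easier but the conclusion would then not match the statement. The second is the passage from the Carleson bound over $\mathcal{S}'$-cubes to the estimate over the selected maximal cubes: one should either choose the $R^{k}_{j}$ inside $\mathcal{S}'$ from the outset, as above, or, if one selects maximal dyadic cubes, first subdivide each into its maximal $\mathcal{S}'$-subcubes before summing. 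The third, mostly bookkeeping, is that the appeal to Lemma 2.6 requires $\sigma\in A_{\infty}$ — automatic under $w\in A_{p}$ — and it is this step, together with the maximal-function bound, that accounts for the dependence of the implied constant on $n$, $p$ and the $A_{\infty}$ characteristics of the weights.
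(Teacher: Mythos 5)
Your proof is correct and takes essentially the same approach as the paper: both apply Lemma~2.6 to reduce to the discrete sum $\sum_{Q} a_{Q}(\langle g\rangle^{\mu w}_{Q})^{p'}$ and then invoke the Carleson embedding theorem for the sequence $\{a_{Q}\}$, whose Carleson constant is bounded by $C_{1}(\lambda)$. The only cosmetic difference is that the paper phrases the embedding as a weak-$(1,1)$ bound for $\{T_{Q}\}$ followed by Marcinkiewicz interpolation against the trivial $L^{\infty}$ bound, whereas you run the equivalent level-set argument directly through the dyadic maximal function.
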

\begin{proof}
By applying Lemma 2.6 with $\lambda_{Q}=|Q|^{\alpha/n-1}\int_{Q}gd\mu w$ and the subset of $\mathcal{S}$ consisting of cubes with $\ell(Q)\leq6/\lambda$, one obtains that
\begin{align*}
    \int_{\mathbb{R}^{n}}|I_{\alpha,\lambda}^{\mathcal{S}}(g\mu w)(x)|^{p^{\prime}}&=\left\|\sum\limits_{\substack{Q\in\mathcal{S}\\\ell(Q)\leq6/\lambda}}\lambda_{Q}\chi_{Q}\right\|_{L^{p^{\prime}}(\sigma)}^{p^{\prime}}\\
    &\lesssim\sum\limits_{\substack{{Q\in\mathcal{S}}\\\ell(Q)\leq6/\lambda}}\lambda_{Q}^{p^{\prime}}\sigma(Q)\\
    &=\sum\limits_{\substack{{Q\in\mathcal{S}}\\\ell(Q)\leq6/\lambda}}|Q|^{\left(\frac{\alpha}{n}-1\right)p^{\prime}}\sigma(Q)\mu w(Q)^{p^{\prime}}(\langle g\rangle_{Q}^{\mu w})^{p^{\prime}}.\tag{3.4}
\end{align*}
Let $a_{Q}:=|Q|^{\left(\frac{\alpha}{n}-1\right)p^{\prime}}\sigma(Q)\mu w(Q)^{p^{\prime}}$, $A:=\{Q\in\mathcal{S}:\ell(Q)\leq6/\lambda\}$ and
$$T_{Q}g(x):=\frac{1}{\mu w(Q)}\int_{Q}|g(x)|d\mu w(x),\quad Q\in\mathcal{S}.$$
For any $t>0$, let $\{Q_{k}\}$ be maximal cubes in the collection $\{Q\in A:\ T_{Q}g>t\}.$ Then it holds that
\begin{align*}
    \sum\limits_{\substack{Q\in A\\T_{Q}g>t}}a_{Q}&\leq\sum_{k}\sum\limits_{\substack{Q^{\prime}\in\mathcal{S}\\Q^{\prime}\subseteq Q_{k}}}a_{Q^{\prime}}\leq C_{1}(\lambda)\sum_{k}\mu w(Q)\\
    &\leq C_{1}(\lambda)\frac{1}{t}\sum_{k}\int_{Q_{k}}|g(x)|d\mu w(x)=C_{1}(\lambda)\frac{1}{t}\int_{\mathbb{R}^{n}}|g(x)|d\mu w(x).
\end{align*}
This means that $\{T_{Q}\}_{Q\in\mathcal{S}}$ is bounded from $\ L^{1}(\mu w)$ to $l^{1,\infty}(\{a_{Q}\})$. Because the $L^{\infty}(\mu w)\rightarrow l^{\infty}(\{a_{Q}\})$ for $\{T_{Q}\}_{Q\in\mathcal{S}}$ is trivial, by interpolating them, we have that $\|\{T_{Q}\}_{Q\in\mathcal{S}}\|_{L^{p^{\prime}}(\mu w)\rightarrow l^{p^{\prime}}(\{a_{Q}\})}\lesssim C_{1}(\lambda)^{1/p^{\prime}}$. Therefore,
$$\int_{\mathbb{R}^{n}}|I_{\alpha,\lambda}^{\mathcal{S}}(g\mu w)(x)|^{p^{\prime}}\lesssim C_{1}(\lambda)\int_{\mathbb{R}^{n}}|g(x)|^{p^{\prime}}d\mu w(x),$$
which completes the proof.
\end{proof}
\indent Now we can prove the Proposition 3.2.
\begin{proof}[Proof of Proposition 3.2.]
We first prove the obvious pointwise inequality $M_{\alpha,\lambda}g(x)\lesssim I_{\alpha,\lambda}g(x)$. In fact if $0<R\leq1/\lambda$, then
\begin{align*}
    \frac{1}{R^{n-\alpha}}\int_{|x-y|\leq R}|f(y)|dy&=\frac{1}{R^{n-\alpha}}\sum_{k=0}^{\infty}\int_{\frac{R}{2^{k+1}}<|x-y|\leq\frac{R}{2^{k}}}|f(y)|dy\\
    &\leq\frac{1}{R^{n-\alpha}}\sum_{k=0}^{\infty}\left(\frac{R}{2^{k}}\right)^{n-\alpha}\int_{\frac{R}{2^{k+1}}<|x-y|\leq\frac{R}{2^{k}}}\frac{|f(y)|}{|x-y|^{n-\alpha}}dy\\
    &\sim I_{\alpha,R}f(x)\leq I_{\alpha,\lambda}f(x).
\end{align*}
\indent From Lemmas 3.3, 3.5, 3.6 above, we have
\begin{align*}
    \|(\lambda^{2}-\Delta)^{-\frac{\alpha}{2}}(g\mu w)\|^{p^{\prime}}_{L^{p^{\prime}}(\sigma)}&\lesssim\|M_{\alpha,\lambda}(g\mu w)\|_{L^{p^{\prime}}(\sigma)}^{p^{\prime}}\\
    &\lesssim\|I_{\alpha,\lambda}(g\mu w)\|_{L^{p^{\prime}}(\sigma)}^{p^{\prime}}\\
    &\lesssim\sum_{t\in\{0,1,2\}^{n}}\|I^{\mathcal{S}^{t}}_{\alpha,\lambda}(g\mu w)\|_{L^{p^{\prime}}(\sigma)}^{p^{\prime}}\\
    &\lesssim C_{1}(\lambda)\|g\|_{L^{p^{\prime}}(\mu w)}^{p^{\prime}}.
\end{align*}
By taking $g=fw^{-1}$, one gets $C_{0}(\lambda)\lesssim C_{1}(\lambda)$.\\
\indent Conversely, we shall prove $C_{1}(\lambda)\lesssim C_{0}(\lambda)$. By testing the function $h=\chi_{Q}w$ for $Q\in\mathcal{D}^{t}$ with some $t\in\{0,1,2\}^{n}$, we see that
\begin{equation*}
    C_{0}(\lambda)\geq\sup\limits_{\substack{Q\in\mathcal{D}^{t}\\\ell(Q)\leq6/\lambda}}\left\{\frac{1}{\mu w(Q)}\int_{Q}\left|(\lambda^{2}-\Delta)^{-\frac{\alpha}{2}}(\chi_{Q}\mu w)(x)\right|^{p^{\prime}}d\sigma(x)\right\}.\tag{3.5}
\end{equation*}
For any $x,y\in Q$, we have $\lambda|x-y|\leq\sqrt{n}\ell(Q)\lambda\leq6\sqrt{n}$. Then by applying Lemma 2.2, one obtains that
\begin{align*}
(\lambda^{2}-\Delta)^{-\frac{\alpha}{2}}(\chi_{Q}\mu w)(x)=\int_{Q}\lambda^{n-\alpha}G_{\lambda,\alpha}(\lambda|x-y|)d\mu w(y)\sim\int_{Q}\frac{d\mu w(y)}{|x-y|^{n-\alpha}}.
\end{align*}
The last integral is bounded from below;
$$\int_{Q}\frac{|g(y)|}{|x-y|^{n-\alpha}}\gtrsim\sum_{Q^{\prime}\in\mathcal{D}^{t}(Q)}|Q^{\prime}|^{\frac{\alpha}{n}-1}\int_{Q^{\prime}}|g(y)|d\mu w(y)\cdot\chi_{Q^{\prime}}(x).$$
The proof can be found for example in the reuslt of Moen and Cruz-Uribe \cite{MC13}. Hence
\begin{align*}
    C_{0}(\lambda)&\gtrsim\sup\limits_{\substack{Q\in\mathcal{D}^{t}\\\ell(Q)\leq6/\lambda}}\left\{\frac{1}{\mu w(Q)}\int_{Q}\left(\sum_{Q^{\prime}\in\mathcal{D}^{t}(Q)}|Q^{\prime}|^{\frac{\alpha}{n}-1}\mu w(Q)\cdot\chi_{Q^{\prime}}(x)\right)^{p^{\prime}}d\sigma(x)\right\}\\
    &\geq\sup\limits_{\substack{Q\in\mathcal{D}^{t}\\\ell(Q)\leq6/\lambda}}\left\{\frac{1}{\mu w(Q)}\sum_{Q^{\prime}\in\mathcal{D}^{t}(Q)}|Q^{\prime}|^{p^{\prime}\left(\frac{\alpha}{n}-1\right)}\mu w(Q^{\prime})^{p^{\prime}}\sigma(Q^{\prime})\right\}.
\end{align*}
According to the definition, for any sparse family $\mathcal{S}$, there exists $t\in\{0,1,2\}^{n}$ that $\mathcal{S}\subseteq\mathcal{D}^{t}$, thus the last inequality shows $C_{1}(\lambda)\lesssim C_{0}(\lambda)$, which completes the proof.
\end{proof}
\subsection{Proof of Corollary 1.3}
In this subsection, we prove Corollary 1.3 by using properties of sparse families.
\begin{proof}[Proof of Corollary 1.3.] According to the discussion in Section 3.1, the proof is reduced to establishing condition \textnormal{(ii)} in Theorem 1.1. Let $v=|x|^{\beta}$ with $\beta>-n$, and $Q(x,l)$ denote the cube centered at $x$ with side length $l$. We set $Q_{0}:=Q(0,l)$. We first estimate $v(Q)$ using a standard calculation.\\
\underline{Case 1: $2\sqrt{n}Q_{0}\cap Q=\varnothing$}: We can see that $|x_{0}|>\sqrt{n}l$. Thus for any $x\in Q$, it holds that
$$\frac{|x_{0}|}{2}<|x_{0}|-\frac{\sqrt{n}l}{2}\leq|x|\leq|x_{0}|+\frac{\sqrt{n}l}{2}<\frac{3|x_{0}|}{2},$$
one obtains
$$v(Q)=\int_{Q}|x|^{\beta}dx\sim|x_{0}|^{\beta}|Q|.$$
\underline{Case 2: $2\sqrt{n}Q_{0}\cap Q\neq\varnothing$}: In this case, it holds
$$\int_{Q}|x|^{\beta}dx\leq\int_{(2\sqrt{n}+2)Q_{0}}|x|^{\beta}dx\sim|Q|^{1+\frac{\beta}{n}}.$$
Conversely, it holds
$$\int_{Q}|x|^{\beta}dx\gtrsim|Q|^{1+\frac{\beta}{n}}$$
for any cube $Q\subseteq\mathbb{R}^{n}.$ In fact in the case $\beta\geq0$, for $Q=Q(x_{0},l)$ with $x_{0}=(a_{i})_{i=1}^{n}$, we define the sequence of points $x_{k}=(b_{i}^{k})_{i=1}^{n}$ where $b_{j}^{k}=0$ for $j\leq k$ and $b_{j}^{k}=a_{j}$ for $j>k$. By the monotonicity of $|x|^{\beta}$, we immediately obtain that
$$\int_{Q}|x|^{\beta}dx\geq\int_{Q(x_{1},l)}|x|^{\beta}dx\geq\ ...\ \geq\int_{Q({x_{n},l})}|x|^{\beta}dx=\int_{Q_{0}}|x|^{\beta}dx\sim|Q|^{1+\frac{\beta}{n}}.$$
Conversely, in the case $\beta<0$, it holds that
$$\int_{Q}|x|^{\beta}dx=\int_{Q_{0}}|x_{0}+y|^{\beta}dy\geq\int_{Q_{0}}(|x_{0}|+y)^{\beta}dy\gtrsim|Q_{0}|l^{\beta}\sim|Q|^{1+\frac{\beta}{n}}.$$
In particular, for any $Q$ with $2\sqrt{n}Q_{0}\cap Q\neq\varnothing$, it holds $v(Q)\sim|Q|^{1+\alpha/n}$.\\
\indent Now, let us first prove the sufficient part. If $-\alpha<a\leq0$, we let $C(Q)$ denote the center of cube $Q$. For any cube $Q\in\mathcal{S}$ with $\ell(Q)\leq\delta$, if $\alpha<a\leq0$, then Lemma 2.5 yields that
\begin{align*}   
\sum\limits_{\substack{Q^{\prime}\in\mathcal{S}\\Q^{\prime}\subseteq Q\\Q^{\prime}:\ type 
\ I}}|Q^{\prime}|^{p^{\prime}\left(\frac{\alpha}{n}-1\right)}\mu w(Q^{\prime})^{p^{\prime}}\sigma(Q^{\prime})&=\sum\limits_{\substack{Q^{\prime}\in\mathcal{S}\\Q^{\prime}\subseteq Q\\Q^{\prime}:\ type 
\ I}}|Q^{\prime}|^{p^{\prime}\left(\frac{\alpha}{n}-1\right)}\mu w(Q^{\prime})^{p^{\prime}-1}\sigma(Q^{\prime})\cdot\mu w(Q^{\prime})\\
&\lesssim\sum\limits_{\substack{Q^{\prime}\in\mathcal{S}\\Q^{\prime}\subseteq Q\\Q^{\prime}:\ type 
\ I}}|Q^{\prime}|^{\frac{\alpha}{n}p^{\prime}-p^{\prime}}|C(Q^{\prime})|^{(ap+\theta)(p^{\prime}-1)}|Q^{\prime}|^{p^{\prime}-1}|C(Q^{\prime})|^{(1-p^{\prime})\theta}|Q^{\prime}|\mu w(Q^{\prime})\\
&=\sum\limits_{\substack{Q^{\prime}\in\mathcal{S}\\Q^{\prime}\subseteq Q\\Q^{\prime}:\ type 
\ I}}|Q^{\prime}|^{\frac{\alpha p^{\prime}}{n}}|C(Q^{\prime})|^{ap^{\prime}}\mu w(Q^{\prime})\\
&\lesssim\sum\limits_{\substack{Q^{\prime}\in\mathcal{S}\\Q^{\prime}\subseteq Q}}|Q^{\prime}|^{\frac{(n+\alpha)p^{\prime}}{n}}\mu w(Q^{\prime})\\
&\lesssim\mu w(Q)|Q|^{\frac{(n+\alpha)p^{\prime}}{n}}.
\end{align*}
Thus one has
$$\frac{1}{\mu w(Q)}\sum\limits_{\substack{Q^{\prime}\in\mathcal{S}\\Q^{\prime}\subseteq Q\\Q^{\prime}:\ type 
\ I}}|Q^{\prime}|^{p^{\prime}\left(\frac{\alpha}{n}-1\right)}\mu w(Q^{\prime})^{p^{\prime}}\sigma(Q^{\prime})\lesssim|Q|^{\frac{(n+\alpha)p^{\prime}}{n}}\rightarrow0\quad\quad\text{as }\delta\rightarrow0.$$
Similarly, we have
\begin{align*}
    \sum\limits_{\substack{Q^{\prime}\in\mathcal{S}\\Q^{\prime}\subseteq Q\\Q^{\prime}:\ type 
\ II}}|Q^{\prime}|^{p^{\prime}\left(\frac{\alpha}{n}-1\right)}\mu w(Q^{\prime})^{p^{\prime}}\sigma(Q^{\prime})&\sim \sum\limits_{\substack{Q^{\prime}\in\mathcal{S}\\Q^{\prime}\subseteq Q\\Q^{\prime}:\ type 
\ II}}|Q^{\prime}|^{\frac{\alpha}{n}p^{\prime}-p^{\prime}}\cdot|Q^{\prime}|^{(p^{\prime}-1)(1+\frac{ap+\theta}{n})}\mu w(Q^{\prime})|Q^{\prime}|^{1+\frac{(1-p^{\prime})\theta}{n}}\\
&= \sum\limits_{\substack{Q^{\prime}\in\mathcal{S}\\Q^{\prime}\subseteq Q\\Q^{\prime}:\ type\ II}}|Q^{\prime}|^{\frac{(n+\alpha)p^{\prime}}{n}}\mu w(Q^{\prime})\\
&\lesssim\mu w(Q)|Q|^{\frac{(n+\alpha)p^{\prime}}{n}}.
\end{align*}
Thus it holds
$$\frac{1}{\mu w(Q)}\sum\limits_{\substack{Q^{\prime}\in\mathcal{S}\\Q^{\prime}\subseteq Q\\Q^{\prime}:\ type 
\ II}}|Q^{\prime}|^{p^{\prime}\left(\frac{\alpha}{n}-1\right)}\mu w(Q^{\prime})^{p^{\prime}}\sigma(Q^{\prime})\lesssim|Q|^{\frac{(n+\alpha)p^{\prime}}{n}}\rightarrow0\quad\quad\text{as }\delta\rightarrow0,$$
yields that
$$\frac{1}{\mu w(Q)}\sum\limits_{\substack{Q^{\prime}\in\mathcal{S}\\Q^{\prime}\subseteq Q}}|Q^{\prime}|^{p^{\prime}\left(\frac{\alpha}{n}-1\right)}\mu w(Q^{\prime})^{p^{\prime}}\sigma(Q^{\prime})\rightarrow0\quad\quad\text{as }\delta\rightarrow0.$$
\indent Finally, we prove the necessary part. If (ii) in Theorem 1.1 holds, then for $a>0$, choose sparse family $\mathcal{S}$ and cube $Q\in\mathcal{S}$ with $2\sqrt{n}Q_{0}\cap Q=\varnothing$, then we have
\begin{align*}
    \frac{1}{\mu w(Q)}\sum\limits_{\substack{Q^{\prime}\in\mathcal{S}\\Q^{\prime}\subseteq Q}}|Q^{\prime}|^{p^{\prime}\left(\frac{\alpha}{n}-1\right)}\mu w(Q^{\prime})^{p^{\prime}}\sigma(Q^{\prime})&\geq|Q|^{\frac{\alpha p^{\prime}}{n}-p^{\prime}}\mu w(Q)^{p^{\prime}-1}\sigma(Q)\\
    &\sim|Q|^{\frac{\alpha p^{\prime}}{n}-p^{\prime}}(|C(Q)|^{ap+\theta}|Q|)^{p^{\prime}-1}|Q||C(Q)|^{(1-p^{\prime})\theta}\\
    &=|Q|^{\frac{\alpha p^{\prime}}{n}}|C(Q)|^{ap^{\prime}},
\end{align*}
which leads to a contradiction if we let $|C(Q)|\rightarrow\infty.$ For $a\leq-\alpha$, choose a $Q\in\mathcal{S}$ with $2\sqrt{n}Q_{0}\cap Q\neq\varnothing$, it holds
\begin{align*}
    \frac{1}{\mu w(Q)}\sum\limits_{\substack{Q^{\prime}\in\mathcal{S}\\Q^{\prime}\subseteq Q}}|Q^{\prime}|^{p^{\prime}\left(\frac{\alpha}{n}-1\right)}\mu w(Q^{\prime})^{p^{\prime}}\sigma(Q^{\prime})&\geq|Q|^{\frac{\alpha p^{\prime}}{n}-p^{\prime}}\mu w(Q)^{p^{\prime}-1}\sigma(Q)\\
    &\sim|Q|^{\frac{\alpha p^{\prime}}{n}-p^{\prime}}\cdot|Q|^{(p^{\prime}-1)(1+\frac{ap+\theta}{n})}|Q|^{1+\frac{(1-p^{\prime})\theta}{n}}\\
    &=|Q|^{\frac{p^{\prime}}{n}(a+\alpha)}\rightarrow\infty\quad\quad\text{as }\delta\rightarrow0.
\end{align*}
This completes the proof.
\end{proof}
\subsection{Equivalence between $\textnormal{(ii)}$ and $\textnormal{(iii)}$ in Theorem 1.1}
\indent The implication $\textnormal{(iii)}\rightarrow\textnormal{(ii)}$ follows from standard capacity techniques. To establish $\textnormal{(ii)}\rightarrow\textnormal{(iii)}$, we proceed by classifying the dyadic cubes and handling their contributions separately in the dyadic Riesz potential. We emphasize that our proof consistently employs the Riesz potential $I_{\alpha}$ to bound the Bessel potential $G_{\alpha}$. We noted in Section 1, Theorem 1.1 remain valid when substituting the Bessel capacity $B^{w}_{\alpha,p}$ with the Riesz capacity $R^{w}_{\alpha,p}$.
\begin{proof}
For $\delta>0$, we denote 
$$k_{1}(\delta):=\sup_{t\in\{0,1,2\}^{n}}\sup\limits_{\substack{\ell(Q)\leq\delta\\Q\in\mathcal{D}^{t}}}\left\{\frac{1}{\mu w(Q)}\sum_{Q^{\prime}\in\mathcal{D}^{t}(Q)}|Q^{\prime}|^{p^{\prime}\left(\frac{\alpha}{n}-1\right)}\mu w(Q^{\prime})^{p^{\prime}}\sigma(Q^{\prime})\right\}$$
and
$$k_{2}(\delta):=\sup_{\text{diam}(E)\leq\delta}\left\{\frac{\mu w(E)}{B^{w}_{p,\alpha}(E)}\right\}.$$
We establish the equivalence $\textnormal{(iii)}\leftrightarrow\lim_{\delta\rightarrow0}k_{2}(\delta)=0$. Furthermore, by the remarks following Theorem 1.1, we obtain the corresponding equivalence $\textnormal{(ii)}\leftrightarrow\ \lim_{\delta\rightarrow0}k_{1}(\delta)\rightarrow0$.\\
\underline{$\textnormal{(iii)}\rightarrow\textnormal{(ii)}$}: for any $Q_{0}\in\mathcal{D}^{t}$ with $t\in\{0,1,2\}^{n}$, and $\ell(Q_{0})\leq\delta\ll1$, we analyze the energy function 
$$\|G_{\alpha}*(\mu w\chi_{Q})\|_{L^{p^{\prime}}(\sigma)}^{p^{\prime}}=\int_{\mathbb{R}^{n}}\left(G_{\alpha}*(\mu w\chi_{Q_{0}})\right)^{p^{\prime}}(x)d\sigma(x).$$
By duality, there exists $0\leq g\in C_{c}^{\infty}$ with $\|g\|_{L^{p}(w)}=1$, such that
$$\|G_{\alpha}*(\mu w\chi_{Q})\|_{L^{p^{\prime}}(\sigma)}^{p^{\prime}}\leq2\int_{\mathbb{R}^{n}}\left(G_{\alpha}*(\mu w\chi_{Q_{0}})\right)(x) g(x)dx=\int_{Q_{0}}G_{\alpha}*g(x)d\mu w(x).$$
For any $\lambda>0$, we denote
$$A_{g}(\lambda):=\left\{x:\ G_{\alpha}*g(x)>\lambda\right\},$$
then the condition $\textnormal{(iii)}$ yields the estimate
$$\mu w_{Q_{0}}(A_{g}(\lambda))=\mu w(Q_{0}\cap A_{g}(\lambda))\leq k_{2}(\delta)B_{p,\alpha}^{w}(Q_{0}\cap A_{g}(\lambda)),$$
where $\mu w_{Q}$ denotes the measure $\mu w_{Q}dx=\chi_{Q}d\mu w(x).$ Let $0\leq h(x):=(1/\lambda)G_{\alpha}*g(x)$, then $h(x)\geq1$ on $A_{g}(\lambda)$. By the definition of $B^{w}_{\alpha,p}$, it holds that
$$B_{p,\alpha}^{w}(Q_{0}\cap A_{g}(\lambda))\leq\frac{1}{\lambda^{p}}\int_{\mathbb{R}^{n}}g(x)^{p}w(x)dx=\frac{1}{\lambda^{p}}.$$
Hence $\|G_{\alpha}*g\|_{L^{p,\infty}(\mu w _{Q_{0}})}\leq k_{2}(\delta)^{1/p}$. Then we apply H\"{o}lder inequality to obtain
\begin{equation}
    \|G_{\alpha}*(\mu w\chi_{Q})\|_{L^{p^{\prime}}(\sigma)}^{p^{\prime}}\lesssim\|G_{\alpha}*g\|_{L^{p,\infty}(\mu w_{Q_{0}})}\|\chi_{Q_{0}}\|_{L^{p^{\prime},1}(\mu w_{Q_{0}})}\lesssim k_{2}(\delta)^{\frac{1}{p}}\mu w(Q_{0})^{\frac{1}{p^{\prime}}}.\tag{3.6}
\end{equation}
Moreover, for any $x,y\in Q_{0}$ satisfying $|x-y|\lesssim\ell(Q)\leq\delta\ll1$, Lemma 2.2 yields
$$G_{\alpha}*(\mu w_{Q_{0}})(x)=\int_{Q_{0}}G_{\alpha}(x-y)\mu w(y)dy\sim\int_{Q_{0}}\mu w(y)|x-y|^{\alpha-n}dy\geq\sum_{Q\in\mathcal{D}^{t}(Q_{0})}|Q|^{\frac{\alpha}{n}-1}\mu w(Q)\chi_{Q}(x).$$
The last inequality employs the following fundamental estimate
$$\int_{Q}\frac{|g(y)|}{|x-y|^{n-\alpha}}\gtrsim\sum_{Q^{\prime}\in\mathcal{D}^{t}(Q)}|Q^{\prime}|^{\frac{\alpha}{n}-1}\int_{Q^{\prime}}|g(y)|d\mu w(y)\cdot\chi_{Q^{\prime}}(x),$$
which was shown by Moen and Cruz-Uribe \cite{MC13}. Thus we have the lower bound,
$$\|G_{\alpha}*(\mu w\chi_{Q})\|_{L^{p^{\prime}}(\sigma)}^{p^{\prime}}\gtrsim\int_{Q_{0}}\left(\sum_{Q\in\mathcal{D}^{t}(Q_{0})}|Q|^{\frac{\alpha}{n}-1}\mu w(Q)\chi_{Q}(x)\right)^{p^{\prime}}d\sigma(x)\geq\sum_{Q\in\mathcal{D}^{t}(Q_{0})}|Q|^{\left(\frac{\alpha}{n}-1\right)p^{\prime}}\mu w(Q)^{p^{\prime}}\sigma(Q),$$
By combining this estimate with the inequality (3.6), we derive the key bound $k_{1}(\delta)\lesssim k_{2}(\delta)^{p^{\prime}/p}$, which completes the proof of $\textnormal{(ii)}$.\\
\underline{$\textnormal{(ii)}\rightarrow\textnormal{(iii)}$ for $\alpha<n/p$ and $\sigma\in A_{1}$}: We shall use an analogous discussion as we did in the proof of Lemma 3.6. With an interpolation between the weak-type bound $L^{1}(\mu w)\rightarrow l^{1,\infty}(\{a_{k}\})$ and the strong-type estimate $L^{\infty}\rightarrow l^{\infty}$, combined with Lemma 2.6, we establish that 
\begin{align*}
\int_{Q_{0}}\left(\sum\limits_{\substack{Q\in\mathcal{S}\\Q\subseteq Q_{0}}}|Q|^{\frac{\alpha}{n}-1}\int_{Q}|g(y)|d\mu w(y)\chi_{Q}(x)\right)^{p^{\prime}}\sigma(x)dx&\lesssim\sum\limits_{\substack{Q\in\mathcal{S}\\Q\subseteq Q_{0}}}|Q|^{\left(\frac{\alpha}{n}-1\right)p^{\prime}}\left(\int_{Q}|g(y)|d\mu w(y)\right)^{p^{\prime}}\sigma(Q)\\&\lesssim k_{1}(\delta)\int_{Q_{0}}|g(x)|^{p^{\prime}}d\mu w(x)\tag{3.7}
\end{align*}
for any sparse family $\mathcal{S}\subseteq\mathcal{D}^{t}$ and $Q_{0}\in\mathcal{D}^{t}$ with $\ell(Q_{0})\leq\delta\ll1$.\\
\indent We firstly show that for any $0\leq g\in L^{p^{\prime}}(\mu w)$ with supp$(g)\subseteq Q_{0}$ and $\ell(Q_{0})\leq\delta$,
\begin{equation}
    \int_{\mathbb{R}^{n}}(G_{\alpha}*(g\mu w))^{p^{\prime}}(x)d\sigma(x)\lesssim k_{1}(2\delta)\int_{\mathbb{R}^{n}}g(x)^{p^{\prime}}d\mu w(x).\tag{3.8}
\end{equation}
For each $t\in\{0,1,2\}^{n}$, there exist disjoint cubes $P_{t}^{i}\in\mathcal{D}^{t}$, ($i=1,2,...,2^{n}$), such that $Q_{0}\subseteq\bigcup_{i}P_{t}^{i}$ and $\ell(Q_{0})\leq\ell(P_{t}^{i})\leq2\ell(Q_{0})$. Furthermore, the Riesz potential is pointwisely bounded by its dyadic counterpart, which is a generalized version of Lemma 3.5; see the research by Moen and Cruz-Uribe \cite{MC13} for details. Thus 
\begin{align*}
    G_{\alpha}*(g\mu w)(x)\lesssim I_{\alpha}(g\mu w)(x)&\lesssim\sum_{t}\sum_{Q\in\mathcal{D}^{t}}|Q|^{\frac{\alpha}{n}-1}\int_{Q}g(y)d\mu w(y)\cdot\chi_{Q}(x)\\
    &\lesssim\sum_{t}\sum_{i}\sum_{Q\in\mathcal{D}^{t}}|Q|^{\frac{\alpha}{n}-1}\int_{Q}g(y)\chi_{P_{t}^{i}}(y)d\mu w(y)\cdot\chi_{Q}(x)\tag{3.9}
\end{align*}
holds from Lemma 2.2. We restrict our consideration to cubes $Q\in\mathcal{D}^{t}$ satisfying $Q\cap P_{t}^{i}\neq\varnothing$, which implies either $Q\in\mathcal{D}^{t}(P_{t}^{i})$ or $P_{t}^{i}\subseteq Q$. For the first case, it is known that there exist sparse family $\mathcal{S}\subseteq\mathcal{D}^{t}(P_{t}^{i})$ such that 
\begin{equation}
    \sum_{Q\in\mathcal{D}^{t}(P_{t}^{i})}|Q|^{\frac{\alpha}{n}-1}\int_{Q}g(y)\chi_{P_{t}^{i}}(y)d\mu w(y)\cdot\chi_{Q}(x)\lesssim\sum_{Q\in\mathcal{S}}|Q|^{\frac{\alpha}{n}-1}\int_{Q}g(y)\chi_{P_{t}^{i}}(y)d\mu w(y)\cdot\chi_{Q}(x),\tag{3.10}
\end{equation}
the detailed proof can be found in the reusult of Moen and Cruz-Uribe\cite{MC13}. Then, we apply $(3.7)$ to see
\begin{equation}
   \int_{P_{t}^{i}} \left(\sum_{Q\in\mathcal{D}^{t}(P_{t}^{i})}|Q|^{\frac{\alpha}{n}-1}\int_{Q}g(y)\chi_{P_{t}^{i}}(y)d\mu w(y)\cdot\chi_{Q}(x)\right)^{p^{\prime}}d\sigma(x)\lesssim k_{1}(\delta)\int_{P_{t}^{i}}g(x)^{p^{\prime}}d\mu w(x).\tag{3.11}
\end{equation}
For cube $Q\in\mathcal{D}^{t}$ that $P_{t}^{i}\subseteq Q$, we denote $P_{k}\in\mathcal{D}^{t}$ to be the unique cube that $P_{t}^{i}\subseteq P_{k}$ satisfying $\ell(P_{k})=2^{k}\ell({P_{t}^{i})}$ ($k=0,1,...$), then we obtain
\begin{align*}
    \sum\limits_{\substack{Q\in\mathcal{D}^{t}\\P_{t}^{i}\subseteq Q}}|Q|^{\frac{\alpha}{n}-1}\int_{Q}g(y)\chi_{P_{t}^{i}}(y)d\mu w(y)\cdot\chi_{Q}(x)&=\sum_{k}|P_{k}|^{\frac{\alpha}{n}-1}\int_{P_{t}^{i}}g(y)d\mu w(y)\cdot\chi_{P_{k}}(x)\\
    &=\int_{P_{t}^{i}}g(y)d\mu w(y)\cdot\sum_{k}\sum_{j\geq k}|P_{j}|^{\frac{\alpha}{n}-1}\chi_{P_{k}\backslash P_{k-1}}(x)\\
    &\lesssim|P_{t}^{i}|^{\frac{\alpha}{n}-1}\int_{P_{t}^{i}}g(y)d\mu w(y)\cdot\sum_{k}2^{(\alpha-n)k}\chi_{P_{k}}(x).
\end{align*}
The inequality (3.7) with the sparse family $\mathcal{S}=\{P_{t}^{i}\}$ yields
\begin{align*}
    &\int_{\mathbb{R}^{n}}\left(\sum\limits_{\substack{Q\in\mathcal{D}^{t}\\P_{t}^{i}\subseteq Q}}|Q|^{\frac{\alpha}{n}-1}\int_{Q}g(y)\chi_{P_{t}^{i}}(y)d\mu w(y)\cdot\chi_{Q}(x)\right)^{p^{\prime}}d\sigma(x)\\&\lesssim|P_{t}^{i}|^{\left(\frac{\alpha}{n}-1\right)p^{\prime}}\left(\int_{P_{t}^{i}}g(y)d\mu w(y)\right)^{p^{\prime}}\sum_{k}2^{(\alpha-n)kp^{\prime}}\sigma(P_{k})\\
    &\lesssim[\sigma]_{A_{1}}|P_{t}^{i}|^{\left(\frac{\alpha}{n}-1\right)p^{\prime}}\left(\int_{P_{t}^{i}}g(y)d\mu w(y)\right)^{p^{\prime}}\sigma(P_{t}^{i})\sum_{k}2^{(\alpha-n)kp^{\prime}}\cdot2^{kn}\\
    &\lesssim k_{1}(2\delta)\int_{P_{t}^{i}}g(x)^{p^{\prime}}d\mu w(x).\tag{3.12}
\end{align*}
Here we have used the result that for $S\subseteq Q$ and $\sigma\in A_{1}$, it holds that $\sigma(Q)/|Q|\leq[\sigma]_{A_{1}}\sigma(S)/|S|$. The inequalities (3.9), (3.11), (3.12) complete the proof of (3.7).\\
\indent Now for any $0\leq f\in L^{p}(w)$ and $Q_{0}$ with $\ell(Q_{0})\leq\delta$, there exists non-negative $g$ supported on $Q_{0}$ with $\|g\|_{L^{p^{\prime}}(\mu w)}=1$ that
\begin{align*}
    \|(G_{\alpha}*f)\chi_{Q_{0}}\|_{L^{p}(\mu w)}&\leq2\int_{Q_{0}}(G_{\alpha}*f)(x)g(x)d\mu w(x)=2\int_{Q_{0}}(G_{\alpha}*(g\mu w))(x)f(x)dx.\tag{3.13}
\end{align*}
We apply H\"{o}lder inequality and (3.8) to get
\begin{align*}
        (G_{\alpha}*f)\chi_{Q_{0}}\|_{L^{p}(\mu w)}\lesssim\|G_{\alpha}*g\mu w\|_{L^{p^{\prime}}(\sigma)}\|f\|_{L^{p}(w)}\lesssim k_{1}(2\delta)^{\frac{1}{p^{\prime}}}\|g\|_{L^{p^{\prime}}(\mu w)}\|f\|_{L^{p}(w)}=k_{1}(2\delta)^{\frac{1}{p^{\prime}}}\|f\|_{L^{p}(w)}.
\end{align*}
Let $E\subseteq\mathbb{R}^{n}$ satisfies $\text{diam}(E)\leq\delta$, and take a cube $Q_{0}$ with $\ell(Q_{0})\leq2\delta$ and $P\subseteq Q_{0}$. For any $0\leq f\in L^{p}(w)$ with $G_{\alpha}*f\geq1$ on $E$, it holds
$$\mu w(E)\leq\int_{Q_{0}}(G_{\alpha}*f)^{p}(x)d\mu w(x)\lesssim k_{1}(2\delta)^{\frac{p}{p^{\prime}}}\int_{\mathbb{R}^{n}}f(x)^{p}dw(x),$$
which leads to the estimate $\mu w(E)/B^{w}_{\alpha,p}(E)\rightarrow0$ as $\delta\rightarrow0$.
\end{proof}
\indent As we noted in Section 1, our proof of the equivalence, $\textnormal{(ii)}\leftrightarrow\textnormal{(iii)}$ relies entirely on estimating the Bessel potential $G_{\alpha}$ by the Riesz potential $I_{\alpha}$. This approach still works even if the roles of $G_{\alpha}$ and $I_{\alpha}$ are reversed. Consequently, Theorem 1.1 remains valid when the Riesz capacity $R_{\alpha,p}^{w}$ is replaced by the Bessel capacity $B^{w}_{\alpha,p}$.
\section{Proof of Theorem 1.4, 1.5 and Corollary 1.6}
\indent In this section, we verify the infinitesimal boundedness (1.1) and corresponding Trduinger's inequalities (1.2) in an equivalent localized form with power weight. We first prove a higher-order weighted Poincar\'{e} inequality on balls, which straightly leads to the necessity of (1.8). The proof of the sufficiency depends on the weighted Gagliardo-Nirenberg inequality that allows us deal with the higher-order differentiations. The proof of Trudinger type inequality (Theorem 1.4) can be directly obtained from the proof of Theorem 1.4. Finally, we provide a proof for Corollary 1.6.\\
\indent Given function $f$, we recall the spherical maximal operator defined by
$M_{S}f(x):=\sup_{t>0}\left|A_{t}f(x)\right|,$ where
$$A_{t}f(x):=\left|\int_{S^{n-1}}f(\cdot-ty)d\sigma(y)\right|$$
for $t>0$. Then the following weighted Poincar\'{e} inequality holds.
\begin{lem}
Let $1\leq p<\infty$ and $m\in\mathbb{N}$ with $m<n$, then for any $\delta>0$, $x_{0}\in\mathbb{R}^{n}$ and $u\in C_{c}^{\infty}(B(x_{0},\delta))$, it holds that
$$\|u\|_{L^{p}(B(x_{0},\delta),w)}\lesssim\delta^{m}\|\nabla^{m}u\|_{L^{p}(B(x_{0},\delta),M_{S}w)}.$$
\end{lem}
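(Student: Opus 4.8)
The plan is to reduce the weighted Poincar\'e inequality to a pointwise representation of $u$ in terms of its $m$-th order derivatives, followed by a weighted estimate for the resulting integral operator that produces the spherical maximal function on the weight side. First I would fix $x_{0}$ and $\delta$, and write, for $u\in C_{c}^{\infty}(B(x_{0},\delta))$, the fundamental-theorem-of-calculus representation along rays: integrating $\partial_{r}^{m}$ of $u$ in polar coordinates centered at $x_{0}$ (or, more symmetrically, using the standard formula that recovers $u$ from $\nabla^{m}u$ via a Riesz-type kernel of order $m$ on the ball). Concretely, one has a bound of the shape
\begin{equation*}
|u(x)|\lesssim\int_{B(x_{0},\delta)}\frac{|\nabla^{m}u(y)|}{|x-y|^{n-m}}\,dy,
\end{equation*}
valid because $m<n$ and $u$ is supported in the ball; the kernel is integrable and the constant depends only on $n,m$ and (harmlessly) on $\delta$ through the support, but in fact is scale-correct so that the $\delta^{m}$ will appear at the end by homogeneity.

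Next I would take $L^{p}(B(x_{0},\delta),w)$ norms of both sides. The key step is to estimate
\begin{equation*}
\left\|\int_{B(x_{0},\delta)}\frac{|\nabla^{m}u(y)|}{|\cdot-y|^{n-m}}\,dy\right\|_{L^{p}(w)}
\end{equation*}
by $\delta^{m}\|\nabla^{m}u\|_{L^{p}(M_{S}w)}$. Here the natural mechanism is duality together with the observation that the adjoint operator, applied to a test function $g\in L^{p'}(w^{1-p'})$, is again a Riesz-type integral, and that averaging against $|x-y|^{m-n}$ over $x$ near $y$ produces spherical averages of $w$; more directly, writing $|x-y|^{m-n}=\int$-type expansion over dyadic annuli $2^{-k}\delta\le|x-y|<2^{1-k}\delta$ and integrating in the angular variable turns $\int w$ over each annulus into a sum of the quantities $A_{t}w$, hence is dominated by $M_{S}w$. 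This is exactly the point where the spherical maximal operator enters rather than the Hardy--Littlewood maximal operator: one only has control of radial averages of $w$ centered at the \emph{running} point $y$, not of solid averages. Summing the geometric series in $k$ (which converges precisely because $m>0$) yields the factor $\delta^{m}$.

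The main obstacle I anticipate is making the weighted estimate rigorous without circularity: one must pass the spherical average from the kernel side onto the weight side cleanly, and the cheap way to do this — just bounding $|x-y|^{m-n}$ by a constant times $\delta^{m-n}$ — would only give $M w$ (solid maximal) on the weight, not $M_{S}w$, and would not be scale-sharp. I expect the honest argument to use either (i) a layer-cake/annular decomposition of the kernel combined with Fubini to integrate the angular variable and recognize $A_{t}w$, or (ii) a direct computation in polar coordinates centered at $x_{0}$, using that $u$ vanishes on $\partial B(x_{0},\delta)$ to integrate $\partial_r^m u$ and then applying Minkowski's integral inequality in $t\in(0,\delta)$, which again forces spherical averages of $w$ through the change of variables. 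A secondary technical point is that $u$ is supported strictly inside the ball, so the boundary terms in the iterated fundamental theorem of calculus all vanish and no extra hypotheses on $w$ (beyond local integrability, so that $M_{S}w<\infty$ a.e.\ where needed) are required; in particular no $A_{p}$ assumption on $w$ is used at this stage, which is consistent with the statement.
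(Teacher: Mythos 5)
Your option (ii) is exactly the paper's argument: the pointwise Riesz-type bound
\[
|u(x)|\lesssim\int_{B(x_{0},\delta)}|x-y|^{m-n}|\nabla^{m}u(y)|\,dy
\]
(which the paper cites from Zhang and Niu), followed by Minkowski's integral inequality to pull $|\nabla^{m}u(y)|$ outside, a change to polar coordinates in $x$ centered at the \emph{running point} $y$, recognition of the angular integral as a spherical average $A_{\rho}w(y)\leq M_{S}w(y)$, and a concluding H\"{o}lder step. You correctly identified the central mechanism for why the spherical maximal function, rather than the Hardy--Littlewood one, is what appears; your option (i) (duality plus annular decomposition) is a variant, but the paper does not use it.

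One concrete gap in your reasoning: the radial/geometric series does \emph{not} converge merely because $m>0$. After Minkowski you must control
\[
\int_{B(x_{0},\delta)}|x-y|^{p(m-n)}w(x)\,dx\sim\int_{0}^{2\delta}\rho^{\,n-1-p(n-m)}\,A_{\rho}w(y)\,d\rho\leq M_{S}w(y)\int_{0}^{2\delta}\rho^{\,n-1-p(n-m)}\,d\rho,
\]
and the last integral is finite if and only if $p(n-m)<n$, i.e.\ $p<n/(n-m)$. For $w\equiv1$, $n=3$, $m=1$, $p=2$ it diverges, so the Minkowski route simply fails in that range. You should have flagged this restriction. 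To be fair, the paper's own Minkowski step (its display for $\int_{B(x_{0},\delta)}|x-y|^{p(m-n)}w(x)\,dx$) carries the identical hidden constraint even though the lemma is stated for all $1\leq p<\infty$; so this is not a defect of your proposal relative to the paper, but it is a genuine gap in both arguments once $p\geq n/(n-m)$, and a sharper two-weight bound for the local Riesz potential (rather than crude Minkowski) would be needed to cover the full stated range.
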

\begin{proof}
We first recall the following pointwise inequality by Zhang and Niu \cite{ZN15} that for any $x\in B(x_{0},\delta)$ and $u\in C_{c}^{\infty}(B(x_{0},\delta))$
$$|u(x)|\lesssim\int_{B(x_{0},\delta)}|x-y|^{m-n}|\nabla^{m}u(y)|dy.$$
According to Minkowski inequality,
\begin{align*}
    \|u\|_{L^{p}(B(x_{0},\delta),w)}&\lesssim\left[\int_{B(x_{0},\delta)}\left(\int_{B(x_{0},\delta)}|x-y|^{m-n}|\nabla^{m}u(y)|dy\right)^{p}w(x)dx\right]^{\frac{1}{p}}\\
    &\lesssim\int_{B(x_{0},\delta)}\left(\int_{B(x_{0},\delta)}|x-y|^{p(m-n)}w(x)dx\right)^{\frac{1}{p}}|\nabla^{m}u(y)|dy.\tag{4.1}
\end{align*}
Here,
\begin{align*}
    \int_{B(x_{0},\delta)}|x-y|^{p(m-n)}w(x)dx&\leq\int_{B(0,2\delta)}|x|^{p(m-n)}w(x+y)dx\\
    &\sim\int_{0}^{2\delta}\rho^{n-1-p(n-m)}\int_{S^{n-1}}w(y+\rho\sigma)d\sigma d\rho\\
    &\lesssim\delta^{p(m-n)+n}\left(M_{S}w(y)\right)^{\frac{1}{p}}.
\end{align*}
Thus, H\"{o}lder inequality yields
\begin{align*}
     \|u\|_{L^{p}(B(x_{0},\delta),w)}&\lesssim\delta^{m-n+\frac{n}{p}}\int_{B(x_{0},\delta)}|\nabla^{m}u(y)|\left(M_{S}w(y)\right)^{\frac{1}{p}}\\
    &\lesssim\delta^{m-n+\frac{n}{p}}|B(x_{0},\delta)|^{1-\frac{1}{p}}\left(\int_{B(x_{0},\delta)}|\nabla^{m}u(y)|^{p}M_{S}w(y)dy\right)^{\frac{1}{p}}\\
    &=\delta^{m}\|\nabla^{m}u\|_{L^{p}(B(x_{0},\delta),M_{S}w)}.
\end{align*}
\end{proof}
\indent Next we turn to prove Theorem 1.4.
\subsection{Proof of Theorem 1.4}
\begin{proof}
    Proof of the Necessity of (1.8): It is well-known that $M_{S}|x|^{\theta}\lesssim|x|^{\theta}$ if and only if $1-n<\theta\leq0$, see for example Manna \cite{M15}. Combining this with Lemma 4.1, we obtain that for power weights $w(x)=|x|^{\theta}$, $\epsilon>0$ and $\phi\in C_{c}^{\infty}$, we have
	\begin{align*}
		\|V\phi\|_{L^{p}(w)}^{p}\lesssim\epsilon\|\nabla^{m}\phi\|^{p}_{L^{p}(w)}+C(\epsilon)\|\phi\|_{L^{p}(w)}^{p}\lesssim(\epsilon+C(\epsilon)\delta^{pm})\|\nabla^{m}\phi\|_{L^{p}(w)}^{p},\tag{4.2}
	\end{align*}
	for any $\delta\leq(\epsilon/C(\epsilon))^{1/(pm)}$, we obtain $\|V\phi\|_{L^{p}(w)}^{p}\lesssim2\epsilon$.\\
\indent Proof of the Sufficiency  of (1.8): For any $\epsilon>0$, we choose $\delta>0$ so that $\|V\phi\|_{L^{p}(w)}^{p}\leq\epsilon\|\nabla^{m}\phi\|_{L^{p}(w)}^{p}$ holds for any $\phi\in C_{c}^{\infty}(B(x_{0},\delta))$. For $\phi\in C_{c}^{\infty}$, we may assume that supp $\phi\subseteq B(0,R)$ for some $R>0$. Let $\eta\in C_{c}^{\infty}(B(0,1))$ satisfy $0\leq\eta\leq1$, $\eta(x)=1$ for any $|x|<1/2$ and $|\nabla^{k}\eta|\lesssim1$ for $k=1,2,...,m$. Next, we pick $x_{i}\in\mathbb{R}^{n}$ that $\{x_{i}\}$ form a cubic lattice with the grid distance $\delta/2\sqrt{n}$, and let $\eta_{i}(x)=\eta((x-x_{i})/\delta)$. Let $\Phi(x)=\sum_{i}\eta_{i}(x)$, where the sum is taken over a finite number of $i$ such that $B(0,2R)\subseteq B(x_{i},\delta)$. Then it is easy to see that $1\leq\Phi\leq C_{1}$ on $B(0,2R)$ with the constant $C_{1}$ depending only on $n$, and $|\nabla^{k}\Phi|\lesssim\delta^{-k}$ on $B(0,2R)$ for $k=1,2,...m$. Now we define
	$$\tau_{i}(x):=\frac{\eta_{i}(x)}{\Phi(x)}.$$
	It holds that $\sum_{i}\tau_{i}=1$ on $B(0,R)$, and for any $x\in B(0,R)$, it holds that $$|\nabla^{k}\tau_{i}(x)|\lesssim\sum_{a=0}^{k}|\nabla^{a}\eta_{i}(x)||\nabla^{k-a}\frac{1}{\Phi}(x)|\lesssim\frac{1}{\delta^{k}}.$$
	Moreover, the lattice construction of $\{x_{i}\}$ ensures a uniform bounded overlap property: there exists a constant $C_{2}$, which depends on $n$ only, such that $\sum_{i}\chi_{B(x_{i},\delta)}(x)\leq C_{2}$ for any $x\in\mathbb{R}^{n}$. Thus for any $x\in B(0,R)$ we obtain that
	$$\sum_{i}|\tau_{i}(x)|^{p}=\sum_{k=1}^{C_{2}}|\tau_{i_{k}}(x)|^{p}\gtrsim\left(\sum_{i}\tau_{i}(x)\right)^{p}=1\quad\text{and}\quad\sum_{i}|\nabla^{k}\tau_{i}(x)|\lesssim\delta^{-kp}.$$
	\indent We now apply the Gagliardo-Nirenberg inequality with power weight, see the result by Leinfelder \cite{L86} for $p=1$ and Duarte and Silva \cite{DD23} for $p>1$, and Lemma 4.1 to obtain:
	\begin{align*}
		\|V\phi\|_{L^{p}(w)}^{p}&=\int_{\mathbb{R}^{n}}|V(x)\phi(x)|^{p}w(x)dx\lesssim\int_{\mathbb{R}^{n}}\sum_{i}|V(x)\phi(x)\tau_{i}(x)|^{p}w(x)dx\\
		&\leq\epsilon\sum_{i}\int_{\mathbb{R}^{n}}|\nabla^{m}(\phi\tau_{i})(x)|^{p}w(x)dx\\
		&\lesssim\epsilon\sum_{i}\sum_{k=0}^{m}\int_{\mathbb{R}^{n}}|\nabla^{k}\phi|^{p}|\nabla^{m-k}\tau_{i}(x)|^{p}w(x)dx\\
		&\lesssim\epsilon\left(\sum_{k=1}^{m}\frac{1}{\delta^{p(m-k)}}\int_{\mathbb{R}^{n}}|\nabla^{k}\phi(x)|^{p}w(x)dx+\frac{1}{\delta^{pm}}\int_{\mathbb{R}^{n}}|\phi(x)|^{p}w(x)dx\right)\\
		&\lesssim\epsilon\left(\sum_{k=1}^{m}\frac{1}{\delta^{p(m-k)}}\|\nabla^{m}\phi\|_{L^{p}(w)}^{\frac{kp}{m}}\|\phi\|_{L^{p}(w)}^{p\left(1-\frac{k}{m}\right)}+\frac{1}{\delta^{pm}}\|\phi\|_{L^{p}(w)}^{p}\right)\\
        &\lesssim\epsilon\left(\sum_{k=1}^{m}\frac{1}{\delta^{p(m-k)}}\|\nabla^{m}\phi\|^{\frac{kp}{m}}_{L^{p}(w)}\delta^{pm\left(1-\frac{k}{m}\right)}\|\nabla^{m}\phi\|_{L^{p}(w)}^{p\left(1-\frac{k}{m}\right)}+\delta^{-pm}\|\phi\|_{L^{p}(w)}^{p}\right)\\
        &\lesssim\epsilon\|\nabla^{m}\phi\|^{p}_{L^{p}(w)}+\epsilon\delta^{-pm}\|\phi\|_{L^{p}(w)}^{p},\tag{4.3}
	\end{align*}
    which completes the proof.
    \end{proof}
\begin{rem}
The necessity argument only relies on the Poincaré inequality Lemma 4.1. In fact, this local estimate remains valid when even $\nabla^{m}$ is replaced by $(-\Delta)^{\alpha/2}$, provided we impose appropriate parameter constraints. To be precise, for $1<p<\infty$, $0<\alpha<n/p$ and $w$ with $w^{1/p}\in A_{p,q}$, the inequality holds that 
$$\|(-\Delta)^{-\frac{\alpha}{2}}f\|_{L^{q}(w^{\frac{q}{p}})}\lesssim\|f\|_{L^{p}(w)},$$
for $1/p-1/q=\alpha/n$, see details for example in the research by Muckenhoupt and Wheeden \textnormal{\cite{MW74}}. In combination with H\"{o}lder inequality, we can see that for any $u\in C_{c}^{\infty}(B(x_{0},\delta))$,
$$\|u\|_{L^{p}(B(x_{0},\delta),w)}=\|uw^{\frac{1}{p}}\|_{L^{p}(B(x_{0},\delta))}\leq|B(x_{0},\delta)|^{\frac{\alpha}{n}}\|uw^{\frac{1}{p}}\|_{L^{q}(B(x_{0},\delta))}\lesssim\delta^{\alpha}\|(-\Delta)^{\frac{\alpha}{2}}u\|_{L^{p}(B(x_{0},\delta),w)}.$$
An analogous argument to our previous necessity proof establishes the corresponding result in this generalized setting.
\end{rem}
\subsection{Proof of Theorem 1.5 and Corollary 1.6}
    \indent Theorem 1.5 can be indicated from the proof above.
    \begin{proof}[Proof of Themrem 1.5.]
$\textnormal{(i)}\rightarrow\textnormal{(ii)}$: Taking $C(\epsilon)\sim\delta^{-\beta}$ and $\epsilon=\delta^{\frac{pm}{\beta+1}}$ in (4.2) yields the desired condition $\textnormal{(ii)}$. $\textnormal{(ii)}\rightarrow\textnormal{(i)}:$ This follows by choosing $\delta\sim\epsilon^{\frac{\beta+1}{pm}}$ (as in the sufficiency proof), which transforms (4.3) into
$$(4.3)=\epsilon\|\nabla^{m}\phi\|^{p}_{L^{p}(w)}+\epsilon^{-\beta}\|\phi\|_{L^{p}(w)}^{p},$$
thereby leads to $\textnormal{(i)}$.
\end{proof}
\indent Finally, we prove the Corollary 1.6.
\begin{proof}[Proof of Corollary 1.6]
By Adams \cite{A86} and $(1.9)$, we derive
\begin{equation}
    \|V\phi\|_{L^{q}(w)}\lesssim\|\nabla^{m}\phi\|_{L^{p}(w)}.\tag{4.4}
\end{equation}
Moreover, by applying H\"{o}lder inequality, we can see that  
$$\|V\phi\|_{L^{p}(w)}\leq\left(w(B(x_{0},\delta))\right)^{\frac{1}{p}-\frac{1}{q}}\|V\phi\|_{L^{q}(w)}$$
for any $\phi\in C_{c}^{\infty}(B(x_{0},\delta))$. Based on the discussion presented in the proof of Corollary 1.3 in Section 3, we obtain that for both cases 1 and 2, $w(B(x_{0},\delta))\lesssim\delta^{n+a}$ holds when $w(x)=|x|^{a}$ $(1-n<a\leq0)$. Hence
$$\|V\phi\|_{L^{p}(w)}\lesssim\delta^{\left(\frac{1}{p}-\frac{1}{q}\right)(n+a)}\|V\phi\|_{L^{q}(w)}\lesssim\delta^{\frac{m}{\beta+1}}\|V\phi\|_{L^{p}(w)}.$$
Combining this with Theorem 1.4 completes the proof.
\end{proof}
\textbf{Acknowledgement}\\
\indent I would like to express my sincere gratitude to Professor J. Cao and Professor H. Tanaka for generously sharing with me their inspiring manuscripts \cite{CDJ25,HKST25} prior to their publication. I am deeply indebted to my advisor, Professor Y. Tsutsui, for his patient guidance and numerous insightful suggestions that greatly improved this work.


\begin{thebibliography}{99}
\begin{sloppypar}
\bibitem{A86}{D. R. Adams, ``\textit{Weighted nonlinear potential theory}" Trans. Amer. Math. Soc. \textbf{297} (1986), no. 1, 73–94.}
\bibitem{AS82}{M. Aizenman and B. Simon, ``\textit{Brownian motion and Harnack inequality for Schrödinger operators}" Comm. Pure Appl. Math. \textbf{35} (1982), no. 2, 209–273.}
\bibitem{C80}{H. L. Cycon, ``\textit{On the stability of selfadjointness of Schrödinger operators under positive perturbations}" Proc. Roy. Soc. Edinburgh Sect. A \textbf{86} (1980), no. 1-2, 165–173.}
\bibitem{CDJ25}{J. Cao, C. Deng and Y. Jin, ``\textit{Characterization of Infinitesimal $L^{p}$-Relative Boundedness of Schrödinger Operators $(-\Delta)^{\frac{\alpha}{2}}+|x|^{\alpha}$}" to appear in Potential Anal (2025).}
\bibitem{CGJW25}{J. Cao, M. Gao, Y. Jin and C. Wang, ``\textit{Characterizations of infinitesimal relative boundedness for higher order Schrödinger operators}" J. Math. Anal. Appl. \textbf{543} (2025), no. 2, Paper No. 128975, 15 pp.}
\bibitem{D97}{E. B. Davies, ``\textit{$L^{p}$ spectral theory of higher-order elliptic differential operators}" Bull. London Math. Soc. \textbf{29} (1997), no. 5, 513–546.}
\bibitem{D02}{E. B. Davies, ``\textit{Non-self-adjoint differential operators}" Bull. London Math. Soc. \textbf{34} (2002), no. 5, 513–532.}
\bibitem{DD23}{R. Duarte and J. Drumond Silva, ``\textit{Weighted Gagliardo-Nirenberg interpolation inequalities}" J. Funct. Anal. \textbf{285} (2023), no. 5, Paper No. 110009, 49 pp.}
\bibitem{F77}{N. Fujii, ``\textit{Weighted bounded mean oscillation and singular integrals}" Math. Japon. \textbf{22} (1977/78), no. 5, 529–534.}
\bibitem{FH17}{S. Fackler and T. P. Hytönen, ``\textit{Off-diagonal sharp two-weight estimates for sparse operators}" New York J. Math. \textbf{24} (2018), 21–42.}
\bibitem{G09}{L. Grafakos, ``\textit{Modern Fourier analysis}" Second edition
Grad. Texts in Math., 250 Springer, New York, 2009. xvi+504 pp.}
\bibitem{HKST25}{N. Hatano, R. Kawasumi, H. Saito and H. Tanaka, ``\textit{On $L^{p}\rightarrow L^{q}$ infinitesimal relative boundedness of Schr\"{o}dinger operators $(-\Delta)^{\frac{\alpha}{2}}+v$}" preprint (2025).}
\bibitem{K72}{T. Kato, ``\textit{Schrödinger operators with singular potentials}" Israel J. Math. \textbf{13} (1972), 135–148.}
\bibitem{K80}{D. S. Kurtz, ``\textit{Littlewood-Paley and multiplier theorems on weighted $L^{p}$ spaces}" Trans. Amer. Math. Soc. \textbf{259} (1980), no. 1, 235–254.}
\bibitem{L86}{C. S. Lin, ``\textit{Interpolation inequalities with weights}" Commun. Partial Differ. Equ. \textbf{11} (1986), 1515-1538.}
\bibitem{L80}{H. Leinfelder, ``\textit{Wüst's theorem and positively perturbed Schrödinger operators}" Math. Z. \textbf{173} (1980), no. 2, 195–197.}
\bibitem{M15}{R. Manna, ``\textit{Weighted inequalities for spherical maximal operator}" Proc. Japan Acad. Ser. A Math. Sci. \textbf{91} (2015), no. 9, 135–140.}
\bibitem{M72}{B. Muckenhoupt, ``\textit{Weighted norm inequalities for the Hardy maximal function}" Trans. Amer. Math. Soc. \textbf{165} (1972), 207–226.}
\bibitem{MC13}{K. Moen and D. Cruz-Uribe, ``\textit{One and two weight norm inequalities for Riesz potentials}" Illinois J. Math. \textbf{57} (2013), no. 1, 295–323.}
\bibitem{MV05}{V. G. Maz'ya and I. E. Verbitsky, ``\textit{Infinitesimal form boundedness and Trudinger's subordination for the Schrödinger operator}" Invent. Math. \textbf{162} (2005), no. 1, 81–136.}
\bibitem{MW74}{B. Muckenhoupt and R. Wheeden, ``\textit{Weighted norm inequalities for fractional integrals}" Trans. Amer. Math. Soc. \textbf{192} (1974), 261–274.}
\bibitem{RS75}{M. Reed and B. Simon, ``\textit{Methods of Modern Mathematical Physics II: Fourier Analysis, Self-Adjointness}" Academic Press [Harcourt Brace Jovanovich, Publishers], New York-London, 1975. xv+361 pp.}
\bibitem{S72}{U. W. Schmincke, ``\textit{Essential selfadjointness of a Schrödinger operator with strongly singular potential}" Math. Z. \textbf{124} (1972), 47–50.}
\bibitem{SS17}{K. B. Sinha and S. Srivastava, ``\textit{Theory of Semigroups and Applications}" Texts Read. Math., 74 Hindustan Book Agency, New Delhi, 2017. x+167 pp. }
\bibitem{T73}{N. S. Trudinger, ``\textit{Linear elliptic operators with measurable coefficients}" Ann. Scuola Norm. Sup. Pisa Cl. Sci. (3) \textbf{27} (1973), 265–308.}
\bibitem{T20}{B. O. Turesson, ``\textit{Nonlinear Potential Theory and Weighted Sobolev Spaces"} Lecture Notes in Math., 1736 Springer-Verlag, Berlin, 2000. xiv+173 pp.}
\bibitem{W87}{J. M. Wilson, ``\textit{Weighted inequalities for the dyadic square function without dyadic $A_{\infty}$}"  Duke Math. J. \textbf{55} (1987), no. 1, 19–50.}
\bibitem{ZN15}{K. Zhang and P. Niu, ``\textit{Higher-order Fefferman-Poincaré type inequalities and applications}" J. Inequal. Appl. 2015, 2015:390, 9 pp.}
\bibitem{ZY09}{Q. Zheng and X. Yao, ``\textit{Higher-order Kato class potentials for Schrödinger operators}" Bull. Lond. Math. Soc. \textbf{41} (2009), no. 2, 293–301.}
\end{sloppypar}
\end{thebibliography}
\end{document}